\documentclass[10pt]{article}
\usepackage{color}
\usepackage{latexsym}
\usepackage{amssymb}
\usepackage{graphicx}
\usepackage{amsmath}
\usepackage{mathtools, amssymb}
\usepackage{mathrsfs}
\usepackage[T1]{fontenc}
\usepackage[utf8]{inputenc}
\usepackage{graphicx}
\usepackage[english]{babel}
\usepackage{overpic}
\usepackage{amssymb}
\usepackage{amsmath,upref}
\usepackage{multirow}
\usepackage{epsfig}
\usepackage{verbatim}
\usepackage{graphicx}
\usepackage{amsmath}
\usepackage{here}
\usepackage{lscape}
\usepackage{fancybox}
\usepackage{fancyhdr}
\usepackage[Lenny]{fncychap}
\usepackage{pifont}
\usepackage{color}
\usepackage{shapepar}
\def \p {\mathbb{P}}
\def \nn {\nonumber}
\def \lee {\lambda(E)}
\def \wfi {\widehat{f}_i}
\def \wf {\widehat{f}}

\usepackage[french]{minitoc}
\usepackage{fancyhdr}
\usepackage{amsxtra}
\def \sl {\underbar}
\def \fstt {\forall s\in [t,T]}
\def \p {\mathbb{P}}
\def \s{\sigma}
\usepackage{pdfpages}
\usepackage{amsmath,amssymb,mathrsfs} 
\usepackage{pifont} 
\usepackage{mathtools}
\usepackage{float}
\usepackage{bbm}
\usepackage[thmmarks,amsmath,framed]{ntheorem} 

\def\bal{\begin{align}}
\def\eal{\end{align}}
\def \lb{\label}
\def \espo{[0,T]\times \R^k}

\def \qed{\hspace*{\fill} $\Box$\par\medskip}
\def \qed{\hspace*{\fill} $\Box$\par\medskip}
\def \g {\gamma}
\def \kp {\kappa}
\def \txst{\textstyle}
\def \nnb {\nonumber}
\def \rw {\rightarrow}

\def \aimns {\mathcal{A}_s^{i,\mu_n}}
\def \aimn {\mathcal{A}^{i,\mu_n}}
\def \stt {s\in [t,T]}

\def \aimm {\mathcal{A}^{i,\mu_m}}

\def \qq {\qquad}
\def \Sup{\displaystyle\sup}

\def \txk {(t,x) \in [0,T]\times \R^{k}}
\def \spa { [0,T]\times \R^{k}}

\def \rk {\R^k}
\def \fst {\forall s\le T}

\def \xtx {X^{t,x}}

\def \vy {\vec y}
\def \rw {\rightarrow}
\def \fii {\forall \ii}

\def \mi {\mathcal{I}}

\def \Sup{\displaystyle\sup}

\def \txk {(t,x) \in [0,T]\times \R^{k}}
\def \spa { [0,T]\times \R^{k}}

\def \hdd{{\cal H}^{2,d}}
\def \g {\gamma}
\def \hld{\mathcal{H}^2(\mathcal{L}^2(\lambda))}
\def \aa {\mathcal{A}^2}
\def \d {\delta}

\def \wt {\mbox{w.r.t}}

\def \ss {{\cal S}^2}
\def \ii {i\in \mi}

\def \lb {\label}
\def \tx {(t,x)\in [0,T]\times \R^k}
\def \sol{\underline}
\def \nd {\noindent}
\def \tbf{\textbf}

\def \stt {s\in [t,T]}

\def \kp {\kappa}
\def \aimns {\mathcal{A}_s^{i,\mu_n}}
\def \aimn {\mathcal{A}^{i,\mu_n}}
\def \qed{\hspace*{\fill} $\Box$\par\medskip}
\def \rk {\R^k}
\def \xtx {X^{t,x}}

\def \vy {\vec y}

\def \mi {\mathcal{I}}

\def \Sup{\displaystyle\sup}

\def \txk {(t,x) \in [0,T]\times \R^{k}}
\def \spa { [0,T]\times \R^{k}}

\def \hdd{{\cal H}^{2,d}}
\def \hld{\mathcal{H}^2(\mathcal{L}^2(\lambda))}
\def \hln {\mathcal{H}^2(\mathcal{L}^2(\lambda_n))}
\def \ynr {({}^{n}\!Y_r^{k,t,x})_{k\in \mi}}
\def \yns {({}^{n}\!Y_s^{k,t,x})_{k\in \mi}}

\def \ymr {({}^{m}\!Y_r^{k,t,x})_{k\in \mi}}
\def \yms {({}^{m}\!Y_s^{k,t,x})_{k\in \mi}}
\def \be {\begin{equation}}
\def \ee {\end{equation}}
\def \aa {\mathcal{A}^2}
\def \d {\delta}
\def \wt {\mbox{w.r.t}}

\def \ss {{\cal S}^2}
\def \ij {i,j\in \mi}
\def \ii {i\in \mi}
\def \lb {\label}
\def \tx {(t,x)\in [0,T]\times \R^k}
\def \sol{\underline}
\def \nd {\noindent}
\def \tbf{\textbf}

\def \aimm {\mathcal{A}^{i,\mu_m}}

\def \rk {\R^k}
\def \xtx {X^{t,x}}

\def \vy {\vec y}

\def \mi {\mathcal{I}}

\def \Sup{\displaystyle\sup}

\def \txk {(t,x) \in [0,T]\times \R^{k}}
\def \spa { [0,T]\times \R^{k}}

\def \hdd{{\cal H}^{2,d}}
\def \hld{\mathcal{H}^2(\mathcal{L}^2(\lambda))}
\def \aa {\mathcal{A}^2}
\def \d {\delta}
\def \wt {\mbox{w.r.t}}

\def \ss {{\cal S}^2}
\def \ii {i\in \mi}
\def \lb {\label}
\def \tx {(t,x)\in [0,T]\times \R^k}
\def \sol{\underline}
\def \nd {\noindent}
\def \tbf{\textbf}
\def \nb {\nonumber}
\def \rk{\R^k}
\def \ms{\medskip}
 
\newcommand{\E}{\mathbb{E}}

\newcommand{\un}{{\mathbf{1}}} 

\newcommand{\pr}{\mathbb{P}}
\newcommand{\eps}{\varepsilon}

\newcommand{\R}{\mathbb{R}}

\def \proof{{\noindent \bf Proof: }}

\def \t {\tau}
\newcommand*{\rom}[2]{\expandafter\@slowromancap\romannumeral #1@}

\def \proof{{\noindent \bf Proof: }}

\numberwithin{equation}{section}
\newtheorem{definition}{Definition }[section]
\newtheorem{proposition}[definition]{Proposition }
\newtheorem{lemme}[definition]%
{Lemma }
\newtheorem{theoreme}[definition]%
{Theorem }
\newtheorem{corollaire}[definition]%
{Corollary }
\newtheorem{remarque}[definition]%
{Remark }
{Assumption}
\theoremseparator{.}

\def \cih {C_{h_i}}
\def \cif {C_{f_i}}
\def \cig {C_{\g_i}}

\def \bc {\color{blue}}
\def \rc {\color{red}}

\def \nc {\color{black}}
\title{\Large \bf Viscosity solution of systems of integral-partial differential equations with interconnected obstacles without Monotonicity Conditions and infinite L\'evy measure.
}
\author{\Large Said Hamad\`ene\thanks{LMM, Le Mans Universit\'e, Avenue Olivier Messiaen, 72085 Le Mans, Cedex 9, France. \texttt{e-mail: said.hamadene@univ-lemans.fr}} \and \Large Mohamed Mnif\thanks{University of Tunis El Manar, Laboratoire de Mod\'elisation Math\'ematique et Num\'erique \texttt{e-mail: mohamed.mnif@enit.utm.tn}} \and \Large Sarra Neffati\thanks{LMM, Le Mans Universit\'e, Avenue Olivier Messiaen, 72085 Le Mans, Cedex 9, France.
\texttt{e-mail: 
Sarra.neffati@epita.fr}}}
\begin{document}
\maketitle
\begin{abstract}
In this paper, we study a system of second-order integral-partial differential equations with interconnected obstacles. A particular case, is the Hamilton-Jacobi-Bellman (HJB for short) system associated with the optimal switching problem in the jump-diffusion model. Getting rid of the monotonicity condition on the generators with respect to the jump component, we construct a continuous viscosity solution of the system, which is unique in the class of continuous bounded functions. 
The L\'evy measure $\lambda(.)$ is of infinite activity. The main tool we use is the associated system of reflected backward stochastic differential equations with jumps and interconnected obstacles for which we also study existence and uniqueness of the solution. 
\end{abstract}
\nd {\bf Keywords}: Integral-partial differential equations; Interconnected obstacles; Non-local terms; Viscosity solution; Switching problem; Reflected backward stochastic differential equations with jumps; L\'evy measure.

\nd {\bf AMS Subject Classification}: 49L25, 35D40, 93C30, 49J40.
\ms

\section{Introduction}
 Let us consider the following system  of integral-partial differential equations (IPDEs for short) with interconnected obstacles: $\forall i \in \mathcal{I}:=\{1,\dots,m\}$,
\begin{equation}\label{sys1.1}
\begin{cases}
\min \lbrace u^i(t,x) - \displaystyle \max_{j \in {\mathcal{I}^{-i}}}(u^j(t,x)-g_{ij}(t)); -\partial_tu^i(t,x) - \mathcal{L}u^i(t,x) - \mathcal{K}u^i(t,x)\\\\
 \quad - {\bar f}_i(t,x,(u^k(t,x))_{k=1,m},(\sigma^\top (t,x) D_xu^i)(t,x),B_iu^i(t,x))\rbrace = 0, \, \, (t,x) \in [0,T) \times \R^k; \\\\
u^i(T,x) = h_i(x),\, x\in \R^k,
\end{cases}
\end{equation}
where the operators $\mathcal{L}$, $\mathcal{K}$ and $(B_i)_{\ii}$ are defined as follows:
\begin{equation}
\begin{aligned}
&\mathcal{L}v(t,x) :=  b(t,x)^\top D_x v(t,x) + \txst \frac{1}{2}\mbox{Tr}[(\sigma\sigma^\top)(t,x)D_{xx}^2 v(t,x)],\\[3pt]
&\mathcal{K}v(t,x) := \textstyle \int_E (v(t,x+\beta(x,e))- v(t,x)- \beta(x,e)^\top D_x v(t,x))\lambda(de) \, \, \, \mbox{ and }\\[5pt]
& B_iv(t,x):= \textstyle \int_E \gamma_i(x,e) (v(t,x+\beta(x,e))- v(t,x))\lambda(de),\,\,\ii.
\end{aligned}
\end{equation}
The notations $D_x v$ and $D^2_{xx}v$ stand for the gradient and Hessian matrix of $v$ with respect to $x$ respectively, while $(.)^\top$ is the transpose and $ \lambda(.)$ is a L\'evy measure on $E := \R^l-\{0\}$.

The main tool to tackle system (\ref{sys1.1}) is to deal with the following system of reflected backward stochastic differential equations (RBSDEs for short) with interconnected obstacles when the noise is driven by a Brownian motion $B := (B_s)_{s \leq T}$ and an independent Poisson random measure $\mu$. A solution for such a system are quadruples of adapted stochastic processes $(Y^{i,t,x},Z^{i,t,x},K^{i,t,x},V^{i,t,x})_{i=1,...,m}$ such that: 
 $\forall \ii$ and $s \in [0,T],$
\begin{equation}\label{sys1.0}
\begin{split}
\begin{cases}
\vspace{0.3cm}
Y_s^{i,t,x} = h_i(X_T^{t,x})+ \int_s^T \bar f_i(r,X_r^{t,x},(Y_r^{k,t,x})_{k\in \mi},Z_r^{i,t,x}, \int_E V_r^{i,t,x}(e)\gamma_i(X_r^{t,x},e) \lambda(de))dr \\\vspace{0.3cm}
 \qquad \qquad   +K_T^{i,t,x} - K_s^{i,t,x} -\int_s^T Z_r^{i,t,x}dB_r -\int_s^T \int_E V_r^{i,t,x}(e) \tilde{\mu}(dr,de);\\\vspace{0.3cm}
 Y_s^{i,t,x} \geqslant  \displaystyle \max_{j \in \mathcal{I}^{-i}}(Y_s^{j,t,x} -g_{ij}(s));\\\vspace{0.3cm}
 \textstyle {\int_0^T (Y_r^{i,t,x} -\displaystyle \max_{j\in \mathcal{I}^{-i}}(Y_r^{j,t,x} -g_{ij}(r))) dK_r^{i,t,x} = 0},
\end{cases}
\end{split}
\end{equation}
where $(t,x) \in [0,T] \times \R^k$, $ds\lambda(de)$ is the compensator  of $\mu$ and $\tilde{\mu}(ds,de):= \mu(ds,de) - ds\lambda(de)$ its compensated random measure, and finally 
$\mathcal{I}^{-i}:= \mathcal{I}-\lbrace i \rbrace$. The process $\xtx$ is the solution of the following standard differential equation:
\begin{equation}\label{sys2.0}
 \begin{cases}
 dX_s^{t,x} = b(s, X_s^{t,x}) ds + \sigma(s, X_s^{t,x}) dB_s + \int_E \beta(s,X_{s-}^{t,x}, e) \tilde{\mu}(ds,de), \hspace{0.2cm}  s\in [t, T]; \\
 X_s^{t,x} = x \in \R^k, \,s\leq t. 
 \end{cases}
 \end{equation}
The system of reflected BSDEs \eqref{sys1.0} is termed of Markovian type since randomness stems from the process $\xtx$ which is Markovian. On the other hand, it is deeply related to the optimal stochastic switching problem (see e.g. \cite{chassagneux2011note,
hamadene2013viscosity,hamadene2015systems,
hamadene2015viscosity,hu2010multi}, for more details). It has been considered in several works including \cite{hamadene2015viscosity} and \cite{hamadene_mnif-neffati2}.
In \cite{hamadene2015viscosity}, the authors showed that if, mainly, the two following monotonicity conditions: For any $\ii$,
\begin{itemize}
 \item[(a)] $\gamma_i \geqslant 0$, 
 \item[(b)] The function $ q\in \R \mapsto \bar f_i(t,x,(y_k)_{k=1,\dots,m},z,q)$ is non-decreasing, when the components $(t,x,(y_k)_{k=1,\dots,m},z)$ are fixed,
\end{itemize}
are satisfied, then system \eqref{sys1.0} has a unique solution $(Y^{i,t,x},Z^{i,t,x},V^{i,t,x},K^{i,t,x})_{\ii}$. Moreover the Feynman-Kac representation of the processes $(Y^{i,t,x})_{\ii}$ holds true, i.e., there exist deterministic continuous functions 
$(u^i)_{\ii}$ defined on $\spa$ such that 
for any $\tx$ and $\ii$, 
 \begin{equation}\label{rep2.15}Y^{i,t,x}_s=u^{i}(s,\xtx_s),\,\forall s\in [t,T]\mbox{ and then }
 u^{i}(t,x)=
Y^{i,t,x}_t.
\end{equation}  
Finally, it is proved that the functions $(u^i)_{\ii}$ are the unique continuous viscosity solution of IPDEs system \eqref{sys1.1} in the class of functions with polynomial growth. \rc The functions $(u^i)_{\ii}$ are nothing else but the value functions of the underlying optimal switching problem. \nc Later in \cite{hamadene_mnif-neffati2}, the authors also considered both systems \eqref{sys1.1} and \eqref{sys1.0} as well, but without assuming the above monotonicity conditions (a)-(b). They proved that if the Lévy measure $\lambda(.)$ associated with the Poisson random measure $\mu$ is finite, i.e. $\lambda(E)<\infty$, then system \eqref{sys1.0} has a unique Markovian solution
$(Y^{i,t,x},Z^{i,t,x},K^{i,t,x},V^{i,t,x})_{\ii}$, i.e. for which the Feynman-Kac representation \eqref{rep2.15} holds. Moreover those functions $(u^i)_{\ii}$ of \eqref{rep2.15}  are the unique viscosity solution of system \eqref{sys1.1}. In \cite{hamadene_mnif-neffati2}, a property which plays an important role is the representation of the process $(V^{i,t,x})_{\ii}$ via the continuous functions $(u^i)_{\ii}$ and the process $\xtx$ and which reads as: 
\begin{align}
\label{rep2}
      V_s^{i,t,x} (e)=  1_{\{s\ge t\}}\{ u^{i}(s,\xtx_{s-}+& \beta(\xtx_{s-},e))- u^{i}(s,\xtx_{s-})\},\nonumber\\
      & \qquad ds\otimes d\mathbb{P} \otimes d\lambda \mbox{ on } [0,T] \times \Omega \times E.
  \end{align}
Therefore the main objective of this paper is to deal with systems \eqref{sys1.0} and \eqref{sys1.1} in the case when $\lambda(.)$ is not finite, i.e., $\lambda(E)=+\infty$ and without assuming neither (a) nor (b) above. Actually in this work, and this is the novelty of the paper, we show that if $\lambda(.)$ integrates the function $(1\wedge |e|)_{e\in E}$, in combination with other regularity properties on the data $(\bar f_i)_{\ii}$, $(h_i)_{\ii}$ and $(g_{ij})_{i,j\in \cal I}$, then system \eqref{sys1.0} has a unique Markovian solution and the Feynman-Kac representation \eqref{rep2.15} holds true. Finally we show that those functions $(u^i)_{\ii}$ are the unique viscosity solution of \eqref{sys1.1}. The relation \eqref{rep2} which binds the processes 
$(V^{i,t,x})_{\ii}$, the functions $(u^i)_{\ii}$ and the process $\xtx$ is also valid. This relation plays an important role in the proof of our result. Basically our method relies  on the truncation of the measure $\lambda(.)$ in such a way to fall in the framework of a finite Lévy measure, already considered in \cite{hamadene_mnif-neffati2},  and then step by step to deal with the general case. 
\def \fii {(f_i)_{\ii}}
\def \lv{\mbox{L\'evy measure}}
\def \bfii {(\bar f_i)_{\ii}}

In this work, we first consider system \eqref{sys1.0} in the case when the generators $\bfii$ do not depend on the jump parts $(v_i)_{\ii}$ and, by a truncation procedure of the $\lv$, we show that the processes $(V^i)_{\ii}$ of its solution $(Y^i,Z^i,V^i,K^i)_{\ii}$, which exists (see \cite{hamadene2015viscosity}, Prop.4.2), verify \eqref{rep2} where $(u^i)_{\ii}$ are the successive Feynman-Kac representations of $(Y^i)_{\ii}$. To state this result we need to assume that the functions $(g_{ij})_{i,j\in \cal I}$ do not depend on $x$. Later we deal with the general case of generators $\bfii$ depending on the jump components $(v_i)_{\ii}$. Using a recursive scheme, we introduce sequences of deterministic continuous functions $(u^{i,n})_{n\ge 0}$, $\ii$, through the Feynman-Kac representations, which we show that they are convergent uniformly in compact sets of 
$[0,T]\times \R^k$ to continuous functions $(u^i)_{\ii}$. Then we construct a Markovian solution $(Y^{i,t,x},Z^{i,t,x},K^{i,t,x},V^{i,t,x})_{\ii}$ for \eqref{sys1.0} which is such that $Y^{i,t,x}_s=u^i(s,\xtx_s)$, $s\in [t,T]$. Moreover we prove that this Markovian solution is unique. Finally we show that the functions $(u^i)_{\ii}$ are the unique viscosity solution of system \eqref{sys1.1}.

The paper is organized as follows.  In Section 2, we provide  notations and assumptions needed in the study of the obliquely RBSDEs with jumps system \eqref{sys1.0} and the related IPDEs system \eqref{sys1.1} as well. Section 3 is mainly devoted to prove the relation \eqref{rep2} in the case when $(\bar f_i)_{\ii}$ do not depend on the jump parts $(v^{i})_{\ii}$. For that we first truncate the Lévy measure $\lambda(.)$ in order to fall in the case of a finite activity Lévy measure since in this latter framework the relation \eqref{rep2} holds true. Then by a limiting procedure we recover \eqref{rep2} in the general case, i.e., when $\lambda(E) =+\infty$. Next,  
we prove the existence of  a solution for system of  RBSDEs  \eqref{sys1.0}, the Feynman-Kac representations \eqref{rep2.15} and the representations \eqref{rep2}. Later we show that the Markovian solution is unique. In Section 4, we prove that the functions $(u^i)_{i=1,m}$ of \eqref{rep2.15} are a unique viscosity solution of \eqref{sys1.1}. Finally in Appendix we collect some results which we need throughout our study. \qed
\section{Preliminaries and notations}
Let $(\Omega, \mathcal{F}, \mathbb{P})$ be a complete probability space, $B = (B_t)_{0\le t\le T}$ is a standard $d$-dimensional Brownian motion and $\mu(dt, de)$ a Poisson random measure, independent of $B$, on $\R^+ \times E$ where $E:= \R^{l} - \lbrace0\rbrace$ is equipped with its Borel $\s$-field $\mathcal{B}(E)$ and with compensator $\nu(dt,de) =\lambda(de)dt$ such
that $\lambda$ is a $\s$-finite measure on $E$. 
We denote by $\mathbb{F}:=(\mathcal{F}_t)_{t \leq T}$ the natural filtration associated with $B$ and $\mu$ which is completed with the $\p$-null sets of $\mathcal  F$. Thus $\mathbb{F}$ verifies the usual conditions. On the other hand, $\tilde \mu$ is the compensated measure of $\mu$. Then for any $A \in \mathcal{B}(E)$ such that $\lambda(A) < \infty$, $\lbrace \tilde{\mu}([0,t]\times A) = (\mu - \nu)([0,t]\times A)\rbrace_{t\leq T }$ is an $\mathbb{F}$-martingale. 

Next for $n\ge 1$, let us set $\mu_n(dt,de):=\un_{\{|e|\ge \frac{1}{n}\}}\mu(dt,de)$, a truncation of $\mu$. Then $\mu_n$ still a Poisson random measure on $\R^+ \times E$ independent of $B$ whose compensator is 
$\nu_n(dt,de):=\un_{\{|e|\ge \frac{1}{n}\}}\lambda(de)dt$. We denote by $\mathbb{F}^{\mu_n}:=(\mathcal{F}^{\mu_n}_t)_{t \leq T}$ the natural filtration associated with $B$ and $\mu_n$ which is once more, completed with the $\p$-null sets of $\mathcal  F$. Note that if $m\ge n$, then for any $t\le T$, $\mathcal{F}^{\mu_n}_t\subset
\mathcal{F}^{\mu_m}_t\subset \mathcal{F}_t$. Finally we assume that 
$\lambda(.)$ integrates the function $ (1\wedge|e|)_{e\in E}$ and $\lambda(E) = \infty$. If $l=1$, $\lambda(de)=|e|^{-\frac{3}{2}}de$ is an example of such a L\'evy measure. 

Now, let us introduce the following spaces:
 \begin{itemize}
 \item[a)] $\mathcal{P}$ (resp. $\mathbf{P}$) is the $\sigma$-algebra of $\mathbb{F}$-progressively measurable (resp. $\mathbb{F}$-predictable) sets on
  $\Omega \times [0,T];$
 \item[b)] $\mathcal{L}^2(\lambda)$ is the space of Borel measurable functions $(\varphi(e))_{e \in E}$ from $E$ into $\R$ such that $\int_E|\varphi(e)|^2 \lambda(de)< \infty$;
 \item[c)] $\ss$ is the space of RCLL (right continuous with left limits), $\mathcal{P}$-measurable and $\R$-valued processes
  $Y:=(Y_s)_{s \leq T}$ such that $\mathbb{E}\big[\displaystyle \sup_{0\leq t\leq T} |Y_s|^2\big] < \infty$;
 \item[d)] $\aa$ is the subspace of $\ss$ of continuous non-decreasing processes $K:= (K_t)_{t\leq T}$ such that $K_0=0$; 
 \item[e)] $\hdd$ is the space of $\mathcal{P}$-measurable and $\R^{d}$-valued processes $Z:=(Z_s)_{s \leq T}$ such that
  $\mathbb{E}\left[\int_0^T |Z_s|^2 ds\right] < \infty$; 
 \item[f)] $\hld$ is the space of $\mathbf{P}$-measurable and $\mathcal{L}^2(\lambda)$-valued processes $V:=(V_s)_{s \leq T}$ such that
  $\mathbb{E}\left[\int_0^T \int_E |V_s(e)|^2 \lambda (de) ds \right]< \infty$.
  \end{itemize}
  \vspace{10px}
  \def \rcl {\mathfrak{rcll}}
  \def \spa {\ss \times \hdd \times \hld}
For {\bc {an RCLL}}  process $(\theta_s)_{s\leq T}$, we define, for any $s\in (0,T],$
 $\theta_{s-} := \lim_{r\nearrow s}\theta_r$ and $\Delta_s\theta := \theta_s- \theta_{s-}$ is the jump size of $\theta$ at $s$.\\
 Now, for any $(t,x) \in [0,T]\times \R^k$, let  $ (X_s^{t,x})_{s \leq T}$ be the process solution of the following stochastic differential equation (SDE for short) of diffusion-jump type: 
 \begin{equation}\label{sys2.1}
 \begin{cases}
 dX_s^{t,x} = b(s, X_s^{t,x}) ds + \sigma(s, X_s^{t,x}) dB_s + \int_E \beta(s,X_{s{-}}^{t,x}, e) \tilde{\mu}(ds,de), \hspace{0.2cm}  s\in [t, T];\\
 X_s^{t,x} = x \in \R^k, \hspace{0.3cm} 0\leq s\leq t,
 \end{cases}
 \end{equation}
 where $b : [0,T]\times \R^k \rightarrow \R^k$ and $\sigma : [0,T]\times \R^k \rightarrow \R^{k\times d}$ are two continuous functions in $(t,x)$ and Lipschitz  $\wt$ $x$, $i.e.$, there exists a positive constant $C$ such that  
 \begin{equation}\label{hyp1}
 |b(t,x) - b(t,x^{\prime})| + |\sigma(t,x) - \sigma(t,x^{\prime})|\leq C|x-x^{\prime}|,\hspace{0.2cm} \forall (t,x,x^{\prime}) \in [0,T]\times \R^{k+k}.
 \end{equation}
 Note that the continuity of $b$, $\sigma$ and \eqref{hyp1} imply the existence of a constant $C$ such that  \begin{equation}\label{hyp2}
 |b(t,x)| + |\sigma(t,x)| \leq C(1+|x|), \hspace{0.2cm} \forall (t,x) \in [0,T]\times \R^{k}.
\end{equation}
Next, let $\beta : \R^k\times E \rightarrow \R^k$ be a $\mathcal{B}(\R^k)\otimes \mathcal{B}(E)$-measurable function such that for some real constant $C_\beta$, $\forall e \in  E \hspace{0.1cm} \mbox{and} \hspace{0.1cm} x, x^{\prime} \in \R^k,$ 
\begin{equation}\label{hyp3}
|\beta(x,e)|\leq C_\beta(1\wedge|e|) \quad \mbox{and} \quad |\beta(x,e)- \beta(x^{\prime},e)| \leq C_\beta|x-x^{\prime}|(1\wedge|e|).
\end{equation}
Conditions \eqref{hyp1}, \eqref{hyp2} and \eqref{hyp3} ensure, for any $\txk$, the existence and uniqueness of a  solution $\xtx$ of equation \eqref{sys2.1} (see \cite{fujiwara1985stochastic} for more details). Moreover, it satisfies: For any $p\ge 2$ and $x,x'\in \R^k$, 
\begin{equation}\label{rep2.2.5}
\mathbb{E}[\sup_{s\leq T} |X_s^{t,x} -x|^p]  \leqslant C(1+|x|^p) \mbox{ and }
\mathbb{E}[\sup_{s\leq T} |X_s^{t,x} -X_s^{t,x'} -(x-x')|^p]  \leqslant C|x-x'|^p.
\end{equation}
Next, let us introduce the following deterministic functions $(\bar f_i)_{\ii}$, $(h_i)_{\ii}$, $(g_{ij})_{i,j\in \mathcal{I}}$ and $(\gamma_i)_{\ii}$ defined as follows : For any $i,j \in \mathcal{I}$,
\begin{align*}
&a)\,\,\,\bar f_i : (t,x,\vec{y},z,q) \in [0,T] \times \R^{k+m+d+1}  \longmapsto \bar f_i(t,x,\vec y,z,q)\in \R \,\,(\vec y:=(y_1,...,y_m))\,;\\
&b)\,\, g_{ij}: t \in [0,T]  \longmapsto g_{ij}(t)\in \R^+ \,; \\&c)\,\,h_i :x\in \R^{k}  \longmapsto h_i(x)\in \R;\\
& d) \,\,\gamma_i : (x,e)\in \R^k \times E \mapsto  \gamma_i  (x,e)\in \R.
\end{align*}
Now let us introduce the following assumptions (H1)-(H5) to which we will refer later. 
\begin{itemize}
\item[\textbf{(H1)}] For any $i \in \lbrace 1,...,m\rbrace$,
\item[(i)] The function  $ \bar f_i$ is continuous in $(t,x)$ uniformly w.r.t. the  variables $(\vec y,z,q)$.
\item[(ii)] The mapping  $(t,x) \mapsto \bar f_i(t,x,\vec 0, 0, 0)$ ($\vec 0=(0)_{\ii}$) is bounded, i.e., for some constant $\check C_i > 0$ we have: 
\begin{equation}
\forall (t,x) \in [0,T]\times \R^k,\,\, |\bar f_i(t,x,\vec 0,0,0)| \leq \check C_i.
\end{equation}
 \item[(iii)] The function $ \bar f_i$ is Lipschitz continuous w.r.t. the variables $(\vy,z,q)$ uniformly in $(t,x)$, i.e., there exists a positive constant $\cif$ such that for any $(t,x) \in [0,T]\times \R^k,$
 $(\vy, z,q)$ and $(\vy_1,z_1,q_1)$ elements of  $\R^{m+d+1}$: 
\begin{equation}\label{lipf}
|\bar f_i(t,x,\vy,z,q)- \bar f_i(t,x,\vy_1,z_1,q_1)| \leq \cif ( |\vy- \vy_1| + |z - z_1| + |q - q_1|).
\end{equation} 
\item[(iv)] For any $\ii$ and $ j\in \mathcal{I}^{-i}$, the mapping $w\in \R \mapsto \bar f_i(t,x,y^1,...,y^{j-1},w,y^{j+1},...,y^m,z,q)$ is non-decreasing whenever the other components   $(t,x,y^1,...,y^{j-1},y^{j+1},...,y^m,z,q)$ are fixed.
 \end{itemize}
 \bigskip
 
\begin{itemize}
\item[\textbf{(H2)}] $\forall i,j \in \lbrace 1,...,m \rbrace,$ $g_{ii} = 0$ and for $i\neq j,$ $g_{ij}(t)$ is non-negative, continuous in $t$ and satisfies the following non free loop property: 
For any $t \in [0,T]$, for any sequence of indices $i_1,...,i_k$ such that $i_1 = i_k$ and $card\lbrace i_1,...,i_{k-1} \rbrace = k-1$ ($k\ge 3$) we have:
\begin{equation}
g_{i_{1}i_{2}}(t) + g_{i_{2}i_{3}}(t) + ... + g_{i_{k-1}i_{1}}(t)> 0.
\end{equation} 
 \item[\textbf{(H3)}] For $i \in \lbrace 1,...,m\rbrace$, the function $h_i$ is continuous and bounded and satisfies the following consistency condition:
     \begin{equation}
          h_i(x) \geqslant \max_{j\in \mathcal{I}^{-i}}(h_j(x)-g_{ij}(T)),\,\,  \forall x \in \R^k.
     \end{equation}
\item[\textbf{(H4)}] For any $i\in \lbrace 1,...,m\rbrace$, $\gamma_i$ is $\mathcal{B}(\R^k) \otimes \mathcal{B}(E)$-measurable and verify: There exists a constant $\cig > 0$ such that for all $e \in E$ and $x,x'\in \rk$, 
\begin{equation}\label{gam}
 |\gamma_i(x,e)|\leq \cig(1\wedge|e|) \mbox{ and }
|\gamma_i(x,e)-\gamma_i(x',e)|\leq \cig(1\wedge|e|)|x-x'|. 
\end{equation}
\item[\textbf{(H5)}] We say that the monotonicity condition holds if: For any $\ii$, 

a) $\g_i\ge 0$.

b) the function $q\in \R\longmapsto \bar f_i(t,x,\vec y,z,q)$ is non-decreasing when the other components $t,x,\vec y$ and $z$ are fixed.\qed
\end{itemize}
\begin{remarque}The assumptions on $(\g_i)_{\ii}$ could be substantially improved. However we stick to this setting in order to be consistent with some other works on this subject such as \cite{bbp,hamadene_mnif-neffati2,hamadene2015viscosity}.\end{remarque}

The following lemma will be useful later. Its proof is given in Appendix. 

\begin{lemme}$\!\!\!{}$ Let $\ii$ be fixed. If $\bar f_i$ verifies (H1)-i), then for any $R>0$, there exists a continuous concave non-decreasing function $\Phi_R^i(.)$ from $\R^+$ to $\R^+$ verifying $\Phi_R^i(0)=0$ and, for any $t\in [0,T]$, $x,x'\in \bar{B}(0,R):=\{x\in \R^k,\,|x|\le R\}$, $\vec y \in \R^m$, $z\in \R^d$ and $q\in \R$:
    \begin{equation}\label{newhyp1}
        |\bar f_i (t,x,\vy,z,q)-\bar f_i
    (t,x',\vy,z,q)|\le \Phi_R^i(|x-x'|).  \end{equation}
\end{lemme}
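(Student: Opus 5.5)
The plan is to build $\Phi_R^i$ as the concave majorant of the modulus of continuity of $\bar f_i$ in $x$ on the compact set $[0,T]\times \bar B(0,R)$.

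\textbf{Step 1: the modulus of continuity.} For $\delta\ge 0$ set
\[
\omega_R(\delta):=\sup\{|\bar f_i(t,x,\vy,z,q)-\bar f_i(t,x',\vy,z,q)| :\ t\in[0,T],\ x,x'\in \bar B(0,R),\ |x-x'|\le \delta,\ (\vy,z,q)\in\R^{m+d+1}\}.
\]
This $\omega_R$ is non-decreasing in $\delta$ and satisfies $\omega_R(0)=0$. Since $\bar f_i$ is continuous in $(t,x)$ uniformly w.r.t. $(\vy,z,q)$, it is uniformly continuous on the compact set $[0,T]\times\bar B(0,R)$, uniformly in $(\vy,z,q)$. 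Formally this is a compactness argument, made uniform because the family $\{\bar f_i(\cdot,\cdot,\vy,z,q)\}$ is equicontinuous by (H1)-i). Hence $\omega_R(\delta)\to 0$ as $\delta\downarrow 0$.

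\textbf{Step 2: finiteness of $\omega_R$.} One point needs care. The difference $\bar f_i(t,x,\cdot)-\bar f_i(t,x',\cdot)$ is bounded uniformly in $(\vy,z,q)$, but $\bar f_i$ itself need not be. Chaining works: pick $\delta_0$ with $\omega_R(\delta_0)\le 1$, and join $x$ to $x'$ inside the convex ball by at most $\lceil 2R/\delta_0\rceil$ steps of length at most $\delta_0$. This gives $\omega_R(\delta)\le \lceil 2R/\delta_0\rceil=:M<\infty$ for all $\delta$. Note that $\omega_R$ is constant for $\delta\ge 2R$.

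\textbf{Step 3: the concave majorant.} Define
\[
\Phi_R^i(\delta):=\inf\{a+b\delta:\ a,b\ge 0,\ \omega_R(s)\le a+bs\ \ \forall s\ge 0\},
\]
the least concave majorant of $\omega_R$. It has the following properties:
\begin{itemize}
\item It is concave and non-decreasing, being an infimum of non-decreasing affine functions.
\item It is finite, since $a=M$, $b=0$ is admissible.
\item It satisfies $\Phi_R^i\ge\omega_R$.
\end{itemize}
Concavity on $[0,\infty)$ gives continuity on $(0,\infty)$. The remaining property is $\Phi_R^i(0)=0$, which also gives continuity at $0$. For this, take any $\varepsilon>0$ and choose $\eta$ with $\omega_R\le\varepsilon$ on $[0,\eta]$. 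Then $\omega_R(s)\le \varepsilon+(M/\eta)s$ for all $s$, so $0\le\Phi_R^i(\delta)\le\varepsilon+M\delta/\eta$. Letting $\delta\to0$ and then $\varepsilon\to0$ gives continuity at $0$ with value $0$.

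\textbf{Conclusion.} Finally,
\[
|\bar f_i(t,x,\vy,z,q)-\bar f_i(t,x',\vy,z,q)|\le\omega_R(|x-x'|)\le\Phi_R^i(|x-x'|),
\]
which is \eqref{newhyp1}.

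The main obstacle is Step 2, ensuring $\omega_R$ is finite and tends to $0$ uniformly in the unbounded variables $(\vy,z,q)$. This relies on reading (H1)-i) as uniform equicontinuity on compacts. If it is read only as pointwise continuity uniform in the parameters, a standard Heine-type argument on the compact $[0,T]\times\bar B(0,R)$ upgrades it.
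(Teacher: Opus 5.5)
Your proof is correct and follows essentially the same route as the paper's appendix proof (Lemma \ref{lemmeccv}): a modulus of continuity taken uniformly over the parameters, a compactness (finite subcover) argument to make it vanish at $0$, chaining across the convex ball for finiteness (the paper phrases this as subadditivity $\Psi_R(\gamma_1+\gamma_2)\le\Psi_R(\gamma_1)+\Psi_R(\gamma_2)$), and then the least concave majorant. The differences are minor: the paper works jointly in $(t,x)$ and writes out the Heine-type covering argument that you only sketch, while you construct the concave majorant explicitly as an infimum of affine majorants and verify $\Phi_R^i(0)=0$ and continuity at $0$, which the paper delegates to a citation.
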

Proof: See Lemma \ref{lemmeccv} in Appendix.\qed
Now, let us consider the following  system of obliquely RBSDEs with jumps:
  $\forall \ii$ and $s \in [0,T]$,
 \begin{equation}\label{sys2.13}
\begin{split}
\begin{cases}
\vspace{0.3cm} Y^{i,t,x}\in \ss,  Z^{i,t,x} \in \hdd, V^{i,t,x} \in \hld \mbox{ and } K^{i,t,x}\in \aa;\\ \vspace{0.3cm}
Y_s^{i,t,x} = h_i(X_T^{t,x})+ \int_s^T \bar f_i(r,X_r^{t,x},(Y_r^{k,t,x})_{k\in \mi},Z_r^{i,t,x}, \int_E V_r^{i,t,x}(e)\gamma_i(X_r^{t,x},e) \lambda(de))dr \\\vspace{0.3cm}
 \qquad \qquad   +K_T^{i,t,x} - K_s^{i,t,x} -\int_s^T Z_r^{i,t,x}dB_r -\int_s^T \int_E V_r^{i,t,x}(e) \tilde{\mu}(dr,de);\\\vspace{0.3cm}
 Y_s^{i,t,x} \geqslant  \displaystyle \max_{j \in \mathcal{I}^{-i}}(Y_s^{j,t,x} -g_{ij}(s));\\\vspace{0.3cm}
 \textstyle {\int_0^T (Y_r^{i,t,x} -\displaystyle \max_{j\in \mathcal{I}^{-i}}(Y_r^{j,t,x} -g_{ij}(r))) dK_r^{i,t,x} = 0.}
\end{cases}
\end{split}
\end{equation}
This system has already been considered in \cite{hamadene2015viscosity} and the authors have shown the following: 
\begin{theoreme}\label{hz}(\cite{hamadene2015viscosity}, pp.1745) Assume that for any $\ij$, it holds:

i) $\bar f_i$ verifies (H1).

ii) $g_{ij}$ and $h_i$ verify (H2) and (H3) respectively.

iii) $\g_i$ verifies (H4).

iv) The monotonicity hypothesis (H5) is satisfied.
\medskip

\nd Then the system \eqref{sys2.13} has a unique solution 
$(Y^{i,t,x},Z^{i,t,x},V^{i,t,x},K^{i,t,x})_{\ii}$. Moreover, there exist deterministic continuous of polynomial growth functions  $(u^i)_{\ii}$, defined on $[0,T] \times \R^k$, such that for any $\txk$, 
\begin{equation}\label{repY1}
\forall \ii \text{ and }s \in [t,T], \, \, Y_s^{i,t,x} = u^i(s,X_s^{t,x}). 
\end{equation} Finally $(u^i)_{\ii}$ are the unique viscosity solution of system \eqref{sys1.1}
\end{theoreme}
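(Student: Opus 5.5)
This statement is taken from \cite{hamadene2015viscosity}; the plan below follows the standard route for such systems under (H1)--(H5).

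\textbf{Step 1: existence and uniqueness for \eqref{sys2.13}.} I would use a Picard-type (or monotone) iteration combined with the comparison theorem for one-dimensional BSDEs with jumps. Comparison is available precisely because of (H5): since $\gamma_i\ge 0$ and $q\mapsto \bar f_i$ is non-decreasing, the standard condition of Barles--Buckdahn--Pardoux type holds for the generator $v\mapsto \bar f_i(\dots,\int_E v(e)\gamma_i(e)\lambda(de))$.

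The construction goes as follows.
\begin{itemize}
\item Start from $Y^{i,0}$, the solution of the non-reflected BSDE with jumps driven by $\bar f_i(\cdot,\vec y,z,q)$, where the $\vec y$-dependence is evaluated at $\vec y=-C$. This gives a lower bound.
\item Define $Y^{i,n+1}$ as the solution of the one-dimensional reflected BSDE with jumps, with lower obstacle $\max_{j\neq i}(Y^{j,n}-g_{ij})$ and with the other components of $\vec y$ frozen at $Y^{k,n}$. The obstacle is RCLL, and its jumps are inaccessible since they come from $\tilde\mu$. Hence $K$ is continuous.
\item By (H1)-(iv) and comparison, $Y^{i,n}$ is non-decreasing in $n$.
\item Boundedness of $\bar f_i(\cdot,\vec 0,0,0)$ and of $h_i$ gives a uniform upper bound. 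This bound comes from a constant-type supersolution, which exists because $g_{ij}\ge 0$.
\item Monotone limit theorems (Peng's monotonic limit, extended to jumps) then give a solution.
\end{itemize}

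Uniqueness I would get via the verification representation: $Y^i$ is the value of an optimal switching problem, i.e. the essential supremum over strategies of a switched BSDE. The non-free-loop property (H2) ensures that optimal strategies switch finitely often a.s. Comparison again requires (H5).

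\textbf{Step 2: Markov property.} Each iterate is a reflected BSDE whose randomness enters only through $X^{t,x}$. By induction on $n$, $Y^{i,n}_s=u^{i,n}(s,X^{t,x}_s)$, with $u^{i,n}$ deterministic. Continuity of $u^{i,n}$ follows from \eqref{rep2.2.5}, Lemma with $\Phi^i_R$, continuity of $h_i,g_{ij}$ and standard stability estimates; these use the Lipschitz bounds (H4) on $\gamma_i$ in $x$.

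The limit $u^i=\lim u^{i,n}$ is then lower semicontinuous and bounded. To get continuity, I would first show that $u^i$ is a viscosity supersolution (lsc) and that its usc envelope is a subsolution. Then I would apply a comparison principle for \eqref{sys1.1} in the class of bounded (polynomial-growth) functions, which gives $u^i=(u^i)^*$, hence continuity. By Dini's theorem the convergence is then locally uniform.

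\textbf{Step 3: viscosity property and uniqueness.} The viscosity property would be proved by the usual test-function argument on the RBSDE at $s=t$, using $Y^{i,t,x}_t=u^i(t,x)$ and the Skorokhod condition. The nonlocal terms are handled by splitting $E$ into $\{|e|<\delta\}$, where the test function $\varphi$ is used, and $\{|e|\ge\delta\}$, where $u$ itself is used. Uniqueness would come from comparison for the system, as follows.
\begin{itemize}
\item Given a subsolution $u$ and a supersolution $v$, assume $\max_i\sup(u^i-v^i)>0$.
\item Using (H2), select an index $i$ at a maximum point where the obstacle part is not active. If it were active for every maximizing index, one would build a loop with zero total cost, which (H2) excludes.
\item At that index, run the doubling-of-variables argument with Jensen--Ishii's lemma adapted to nonlocal operators. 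Monotonicity (H5) gives the right sign for the $B_i$ terms, and (H1)-(iv) handles the coupling in $\vec y$.
\end{itemize}

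The main obstacle is this comparison principle, specifically combining the interconnected obstacles, through the non-free-loop selection of an index, with the nonlocal terms under an infinite L\'evy measure. That is exactly where (H5) is indispensable.
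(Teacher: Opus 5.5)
The paper gives no argument for this statement beyond the citation to \cite{hamadene2015viscosity}. Your outline is the standard route for such systems, so it is essentially the same approach: an increasing Picard scheme driven by comparison under (H5) and (H1)-(iv), the switching representation for uniqueness, continuity of the lower semicontinuous limit via the half-relaxed sub/supersolution argument plus a comparison principle and Dini, and viscosity uniqueness through the non-free-loop choice of an index.

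Two details need repair:
\begin{itemize}
\item \emph{Initialization.} Freezing the own variable $y_i$ at a constant $-C$ does not give $Y^{i,1}\ge Y^{i,0}$, because $\bar f_i$ is not monotone in $y_i$. It also gives no self-consistent bound when $T$ is large. Initialize instead, for every $i$, with the deterministic solution of $\underline y(s)=-\bar C-\int_s^T(\bar C+mC^y_f|\underline y(r)|)\,dr$.
\item \emph{Order of the limits.} Pass to the limit in $(Z,V,K)$ only after Dini has given locally uniform convergence of $u^{i,n}$. A monotonic limit theorem alone yields neither continuity of the limit $K^i$ nor the Skorokhod condition.
\end{itemize}
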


The system \eqref{sys2.13} has been also considered in \cite{hamadene_mnif-neffati2} without supposing [H5] but at the price of assuming $\lambda(E)<\infty$. Actually it is proved in \cite{{hamadene_mnif-neffati2}} that:
\begin{theoreme}(\label{hmn}\cite{{hamadene_mnif-neffati2}}, {\bc Theorem 2 and Corollary 1)}: Assume that for any $\ij$, the following assumptions are fulfilled:

i) $\bar f_i$ verifies (H1).

ii) $g_{ij}$ and $h_i$ verify (H2) and (H3) respectively.

iii) $\g_i$ verifies (H4).

iv) $\lambda(E)<\infty$. 
\medskip

\nd Then the system \eqref{sys2.13} has a unique Markovian solution 
$(Y^{i,t,x},Z^{i,t,x},V^{i,t,x},K^{i,t,x})_{\ii}$, i.e., a solution which satisfies the following Feynman-Kac representation:
\begin{equation}\label{repY12}
\forall \ii \text{ and }s \in [t,T], \, \, Y_s^{i,t,x} = u^i(s,X_s^{t,x})
\end{equation}
where $(u^i)_{\ii}$ are deterministic continuous of polynomial growth functions  defined on $[0,T]\times \R^k$. Moreover the following relation is satisfied:
\begin{align}\label{repV2}
V_s^{i,t,x}(e) = 1_{\{s\ge t\}}( u^{i}(s,\xtx_{s-}+& \beta(\xtx_{s-},e))- u^{i}(s,\xtx_{s-})),\nonumber\\
& \qquad ds\otimes d\mathbb{P} \otimes d\lambda \mbox{ on } [0,T] \times \Omega \times E.
\end{align}Finally $(u^i)_{\ii}$ are the unique viscosity solution of system \eqref{sys1.1}
\end{theoreme}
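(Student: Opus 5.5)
The plan is a Picard iteration on the nonlocal argument $q$. At each step $q$ is frozen into a known function of $(t,x)$, so that Theorem \ref{hz} applies: with no $q$-dependence, the monotonicity condition (H5)-b) holds trivially, and (H5)-a) is irrelevant since $\gamma_i$ then enters only through a deterministic function. The finiteness $\lambda(E)<\infty$ is used to make the resulting map a contraction and to identify the jump processes $V^{i,t,x}$.

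\textbf{Construction of the iteration.} Put $u^{i,0}:=0$. Given bounded continuous $(u^{i,n})_{\ii}$, define
\begin{equation*}
f^{n+1}_i(t,x,\vy,z):=\bar f_i\Big(t,x,\vy,z,\textstyle\int_E \gamma_i(x,e)\big(u^{i,n}(t,x+\beta(x,e))-u^{i,n}(t,x)\big)\lambda(de)\Big).
\end{equation*}
Since $\lambda(E)<\infty$, $u^{i,n}$ is bounded and continuous, and $\gamma_i$, $\beta$ satisfy (H4) and \eqref{hyp3}, dominated convergence gives:
\begin{itemize}
\item[(a)] the integral term is continuous in $(t,x)$ and bounded by $2C_{\gamma_i}\|u^{i,n}\|_\infty\lambda(E)$;
\item[(b)] hence $f^{n+1}_i$ satisfies (H1).
\end{itemize}
Theorem \ref{hz} then yields a unique solution $(Y^{i,n+1},Z^{i,n+1},V^{i,n+1},K^{i,n+1})$ together with continuous functions $u^{i,n+1}$ such that $Y^{i,n+1}_s=u^{i,n+1}(s,X^{t,x}_s)$. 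These functions are bounded, because $h_i$ and $f^{n+1}_i(\cdot,\vec 0,0)$ are bounded; I would check this via the switching representation or a standard comparison with constant barriers.

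\textbf{Contraction estimate (main obstacle).} I need $\|u^{n+1}-u^n\|_{\infty}$ to be controlled by $\|u^n-u^{n-1}\|_\infty$ without any monotonicity in $q$. The plan is to use the representation of $Y^{i,n+1}_t$ as the value of an optimal switching problem, i.e.\ a supremum over strategies of solutions of non-reflected BSDEs. Because the supremum over strategies is $1$-Lipschitz, it suffices to compare the non-reflected BSDEs with generators $f^{n+1}$ and $f^{n}$ along a fixed strategy. Standard $L^2$ estimates with Lipschitz constant $C_{f_i}$ then give, on a time interval $[T-\delta,T]$,
\begin{equation*}
\sup_{[T-\delta,T]\times\R^k}|u^{i,n+1}-u^{i,n}|\le C\delta\, 2C_{\gamma_i}\lambda(E)\max_k\sup_{[T-\delta,T]\times\R^k}|u^{k,n}-u^{k,n-1}|,
\end{equation*}
where $C$ depends only on $T$, $m$ and the constants $C_{f_i}$. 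The coupling in $\vy$ is handled by taking the maximum over $k$ together with Gronwall. Choosing $\delta$ with $C\delta\,2C_\gamma\lambda(E)<1$ gives a contraction in sup norm. This produces a uniform limit $(u^i)_{\ii}$ on $[T-\delta,T]$, and the argument is then repeated backward on $[T-2\delta,T-\delta]$ with terminal data $u^i(T-\delta,\cdot)$, and so on; the consistency condition at the restart times holds because the limits satisfy the obstacle inequality. The delicate point is making the switching-representation comparison rigorous for the whole coupled system; if it fails, I would instead use an It\^o--Tanaka estimate on $\max_i|Y^{i,n+1}-Y^{i,n}|$ via the Skorokhod conditions.

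\textbf{Identification of $V$.} Since $\lambda(E)<\infty$, $\mu$ has finitely many atoms, and the jumps of $X^{t,x}$ occur exactly at those atoms with size $\beta(X_{s-},e)$. Also, $K$ is continuous and $u^i$ is continuous. Hence
\begin{equation*}
\Delta Y^i_s=u^i(s,X_{s-}+\beta(X_{s-},e))-u^i(s,X_{s-}),
\end{equation*}
while $\Delta Y^i_s=V^i_s(e)$ from the equation. By predictability and uniqueness of the jump integrand, this gives \eqref{repV2}. Passing to the limit shows that the limit $u$ is a fixed point: with $q$ given by $B_iu^i$, the frozen system coincides with \eqref{sys2.13}. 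This yields existence of a Markovian solution together with \eqref{repY12} and \eqref{repV2}.

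\textbf{Uniqueness and the PDE.} Any Markovian solution has its $V$ given by \eqref{repV2}, by the same jump identification. Its functions $u$ are therefore a fixed point of the iteration map, which is unique by the contraction, and this gives uniqueness. Applying Theorem \ref{hz} to the frozen generator at the fixed point shows that $(u^i)_{\ii}$ is a viscosity solution of \eqref{sys1.1}, since the frozen term equals $B_iu^i$. Uniqueness among bounded continuous solutions follows from a comparison principle in which the nonlocal terms $B_i$ and $\mathcal K$ are bounded perturbations thanks to $\lambda(E)<\infty$. Alternatively, any viscosity solution is a fixed point of the same map by uniqueness in Theorem \ref{hz}.
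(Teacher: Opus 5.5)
Your proposal is correct and follows essentially the paper's route. The paper cites \cite{hamadene_mnif-neffati2} for this result, but it reproduces the same scheme in Theorems \ref{existence2}, \ref{unic} and \ref{thviscosite}: a Picard iteration on the frozen nonlocal argument with Theorem \ref{hz} at each step, and a switching-representation estimate that gives a contraction on short time strips, propagated backward. It then identifies $V$ from the jumps of $Y_s=u(s,X^{t,x}_s)$ with $u$ continuous, gets uniqueness of Markovian solutions as uniqueness of the fixed point, and obtains the viscosity statements through the frozen system.

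Two small corrections. First, discard the ``comparison principle with bounded perturbations'' option for viscosity uniqueness. Without (H5), at a maximum point of $u-v$ the term $B_iu^i-B_iv^i$ has no sign and is only bounded by $\sup|u-v|$. That closes for two solutions but not for a sub/supersolution pair. Your fixed-point alternative is the right argument, and it is the one the paper uses. Second, if you want uniqueness in the polynomial-growth class of the statement rather than among bounded functions, run the contraction in a weighted norm such as $\sup_{t,x}|v(t,x)|/(1+|x|^p)$. This works because $|\beta|\le C_\beta$ and $X^{t,x}$ has moments of all orders.
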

\begin{remarque}In the proof of this result, the representation \eqref{repV2} of the jump processes $V^{i,t,x}$ plays an important role. \qed
\end{remarque}
\section{Systems of Obliquely RBSDEs with Jumps and related IPDEs when $\lee=+\infty$.}
In this section, we also consider system \eqref{sys2.13} but without assuming neither $\lee <\infty$ nor the monotonicity conditions [H5]. 
Relaxing these hypotheses, we establish a new existence result of a solution for  the system \eqref{sys2.13}. The proof is rather long and will be obtained after several intermediary results.

The first step is to consider system \eqref{sys2.13} when $\bar{f}_i(.)$, $\ii$, do not depend on the jump part. The goal is to show that \eqref{repV2} is also valid. Actually let us consider the following system:
$\forall \ii$ and $s \in [0,T]$,
 \begin{equation}\label{sys2.14}
\begin{split}
\begin{cases}
\vspace{0.3cm} Y^{i,t,x}\in \ss,  Z^{i,t,x} \in \hdd, V^{i,t,x} \in \hld \mbox{ and } K^{i,t,x}\in \aa;\\ \vspace{0.3cm}
Y_s^{i,t,x} = h_i(X_T^{t,x})+ \int_s^T \wfi(r,X_r^{t,x},(Y_r^{k,t,x})_{k\in \mi},Z_r^{i,t,x})dr \\\vspace{0.3cm}
 \qquad \qquad   +K_T^{i,t,x} - K_s^{i,t,x} -\int_s^T Z_r^{i,t,x}dB_r -\int_s^T \int_E V_r^{i,t,x}(e) \tilde{\mu}(dr,de);\\\vspace{0.3cm}
 Y_s^{i,t,x} \geqslant  \displaystyle \max_{j \in \mathcal{I}^{-i}}(Y_s^{j,t,x} -g_{ij}(s));\\\vspace{0.3cm}
 \textstyle {\int_0^T (Y_r^{i,t,x} -\displaystyle \max_{j\in \mathcal{I}^{-i}}(Y_r^{j,t,x} -g_{ij}(r))) dK_r^{i,t,x} = 0.}
\end{cases}
\end{split}
\end{equation}
\def \iji {i,j \in \i}
By Hamadene-Zhao's result (see {\bc\cite{hamadene2015viscosity}}, pp.1745), the solution of this system exists and is unique when for any 
$\ij$, $\wf_i$ verify (H1) and $g_{ij}$, $h_i$, $\g_i$ verify (H2),(H3) and (H4) respectively. On the other hand there exist deterministic continuous functions of polynomial growth $(u^i)_{\ii}$ such that: 
For any $\ii$, 
\be\lb{fkrep1}
Y^{i,tx}_s=u^i(s,X^{t,x}_s),\forall s\in [t,T].\ee
The main task now is to show the characterization \eqref{repV2} in this specific case of $\wfi(.), \ii$. For this objective we are going to assume that 
the functions $(\widehat f_i)_{\ii}$ verify some boundedness property, i.e., there exists a constant $\hat C$ such that for any $\ii$, 
\begin{equation} \label{bornitude}
|\widehat{f}_i(t,x,\vec 0,0)|\leq \hat C, \,\,\forall (t,x)\in [0,T]\times \R^k.
\end{equation}
\subsection{Truncation of the $\lv$}
For any $n\geq 1,$ let us introduce a new Poisson random measure $\mu_n$ and its associated compensator $\nu_n$ as follows :
\begin{align*}
\mu_n(ds,de)=  \mathbf{1}_{\lbrace|e|\geq \frac{1}{n}\rbrace} \mu(ds,de) \mbox{ and }\nu_n(ds,de) =  \lambda_n(de)ds := \mathbf{1}_{\lbrace|e|\geq \frac{1}{n}\rbrace} \lambda(de)ds.
\end{align*}
The random measure $\tilde{\mu}_n(ds,de) := (\mu_n- \nu_n)(ds,de),$ is the compensated one associated with $\mu_n$. The main point to notice is that
$\lambda_n(E)=\textstyle  \int_E \lambda_n(de) < \infty, \,\forall n\ge 1.$ \\
Next, let us introduce the process ${}^{n}\!X^{t,x}$ solution of the following SDE of jump-diffusion type:
  \begin{align}\label{SDEtranqué}
 \begin{cases}
 d{}^{n}\!X_s^{t,x} =  b(s, {}^{n}\!X_s^{t,x}) ds + \sigma(s, {}^{n}\!X_s^{t,x}) dB_s + \int_E \beta({}^{n}\!X_{s-}^{t,x}, e) \tilde{\mu}_n(ds,de), \hspace{0.2cm}  s\in [t, T] ;\\[3pt]
 {}^{n}\!X_s^{t,x} = x \in \R^k, \hspace{0.3cm} 0\leq s\leq t.  
 \end{cases}
 \end{align}
 Note that thanks to the assumptions on $b, \sigma$ and $\beta$, the solution of \eqref{SDEtranqué} exists and is unique. Moreover, it satisfies the same estimates as in \eqref{rep2.2.5}. Indeed, the measure $\lambda_n(.)$ is just a truncation at the origin of the measure $\lambda(.)$ which integrates $(1 \wedge |e|)_{e \in E}$ and then $(1 \wedge |e|^2)_{e \in E}$. In the following lemma we collect some results of the process ${}^{n}\!X^{t,x}$.
 \begin{lemme}\label{truncation}
 The process ${}^{n}\!X^{t,x}$ satisfies the following properties: 
 \begin{itemize}
     \item[(i)] For any $p\geq 2$, there exists a constant $C$ such that 
     \begin{equation}\label{estimXn}
         \E \Big[\displaystyle \sup_{s\leq T} |{}^{n}\!X_s^{t,x}|^p\Big] \leq C (1+|x|^p).
     \end{equation}
     \item[(ii)] For any $p\ge 1$ and $m\geq n \geq 1$,
     \begin{equation}\label{convergenceXn}
          \displaystyle \E \Big[\displaystyle \sup_{s\leq T} |{}^{n}\!X_s^{t,x}-{}^{m}\!X_s^{t,x} |^{2p}\Big]\le C \Big\{\Big(\underbrace{\int_{\lbrace\frac{1}{m} \leq |e| \leq  \frac{1}{n}\rbrace} (1\wedge|e|^2) \lambda(de) \Big)^{p}+
          \int_{\lbrace\frac{1}{m} \leq |e| \leq  \frac{1}{n}\rbrace} (1\wedge|e|^{2p}) \lambda(de)}_{\Lambda^{n,m}_p}
          \Big\}.
     \end{equation}
     \item[(iii)] For any $p\ge 1$, 
     \begin{equation}\label{cvgence2}
         \lim_{n\rw \infty} \displaystyle \E \Big[\displaystyle \sup_{s\leq T} |{}^{n}\!X_s^{t,x}-X_s^{t,x} |^{2p}\Big]=0.
     \end{equation}
 \end{itemize}
 \end{lemme}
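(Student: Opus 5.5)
We plan to prove the three items with standard SDE estimates: Itô's formula, the Burkholder–Davis–Gundy (BDG) inequality, the Kunita-type inequality for stochastic integrals with respect to compensated Poisson measures, and Gronwall's lemma. Assumptions \eqref{hyp1}–\eqref{hyp3} are used throughout. The key remark is that $\lambda_n\le\lambda$. Hence every constant will depend only on $\int_E(1\wedge|e|^2)\lambda(de)$, on $\int_E (1\wedge |e|^{p})\lambda(de)$ (finite for $p\ge 2$ since $1\wedge|e|^p\le 1\wedge |e|$), and on $C$, $C_\beta$, $T$, $p$. None of them depends on $n$.

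\textbf{Item (i).} Write ${}^{n}\!X_s^{t,x}-x$ as the sum of the drift integral, the Brownian integral and the compensated jump integral. For the first two terms we use Hölder's inequality, BDG and the linear growth \eqref{hyp2}. For the jump term we use the Kunita inequality:
\begin{equation*}
\E\Big[\sup_{s\le u}\Big|\int_t^s\int_E \beta\,\tilde\mu_n(dr,de)\Big|^p\Big]\le C_p\,\E\Big[\Big(\int_t^u\int_E|\beta|^2\lambda_n(de)dr\Big)^{p/2}+\int_t^u\int_E|\beta|^p\lambda_n(de)dr\Big].
\end{equation*}
By \eqref{hyp3}, the right-hand side is bounded by a constant independent of $n$. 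Gronwall's lemma applied to $u\mapsto \E[\sup_{s\le u}|{}^{n}\!X_s^{t,x}|^p]$ then gives \eqref{estimXn}.

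\textbf{Item (ii).} Set $\Delta_s:={}^{n}\!X_s^{t,x}-{}^{m}\!X_s^{t,x}$. Since $\tilde\mu_m=\tilde\mu_n+\un_{\{1/m\le|e|<1/n\}}\tilde\mu$, the jump part of $\Delta$ splits as
\begin{equation*}
\int_t^s\int_E\big(\beta({}^{n}\!X_{r-},e)-\beta({}^{m}\!X_{r-},e)\big)\tilde\mu_n(dr,de)-\int_t^s\int_{\{\frac1m\le|e|<\frac1n\}}\beta({}^{m}\!X_{r-},e)\tilde\mu(dr,de).
\end{equation*}
We then apply BDG and the Kunita inequality with exponent $2p$ to each term. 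The drift, diffusion and first jump terms are Lipschitz in $\Delta$, by \eqref{hyp1} and \eqref{hyp3}. They are bounded by $C\int_t^u\E[\sup_{r\le v}|\Delta_r|^{2p}]dv$; here $\int_E(1\wedge|e|^2)\lambda(de)$ and $\int_E(1\wedge |e|^{2p})\lambda(de)$ are finite. For the second jump term, the bound $|\beta|\le C_\beta(1\wedge|e|)$ gives exactly
\begin{equation*}
C\Big(\int_{\{\frac1m\le|e|\le\frac1n\}}(1\wedge|e|^2)\lambda(de)\Big)^p+C\int_{\{\frac1m\le|e|\le\frac1n\}}(1\wedge|e|^{2p})\lambda(de)=C\Lambda^{n,m}_p .
\end{equation*}
Gronwall's lemma then yields \eqref{convergenceXn}. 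The case $p=1$ works the same way, with only the quadratic term (the two summands coincide up to the constant).

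\textbf{Item (iii).} We repeat the computation of (ii) with ${}^{m}\!X$ replaced by $X^{t,x}$. This amounts to replacing the set $\{\frac1m\le|e|\le\frac1n\}$ by $\{|e|<\frac1n\}$. The bound obtained is $C\big[(\int_{\{|e|<1/n\}}|e|^2\lambda(de))^p+\int_{\{|e|<1/n\}}|e|^{2p}\lambda(de)\big]$. It tends to $0$ by dominated convergence, because $|e|^2,|e|^{2p}\le |e|$ on $\{|e|<1\}$ and $\lambda$ integrates $1\wedge|e|$. Alternatively, (ii) shows that $({}^{n}\!X^{t,x})_n$ is Cauchy in the $\sup$-$L^{2p}$ norm, and the limit can be identified with $X^{t,x}$ by uniqueness.

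The main obstacle is the jump term in (ii) and (iii). We need the correct Kunita/BDG inequality for $2p$-th moments of compensated Poisson integrals, including the extra non-quadratic term; this is what produces the second summand in $\Lambda^{n,m}_p$. We also need to ensure that the constants are uniform in $n,m$. This uniformity comes from dominating $\lambda_n$ by $\lambda$.
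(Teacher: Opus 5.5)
Your proposal is correct and follows essentially the same route as the paper. The paper also uses BDG and the generalized BDG (Kunita-type) inequality for the $2p$-th moment of the jump integral, controls the common region $\{|e|\ge 1/n\}$ by the Lipschitz property of $\beta$, obtains $\Lambda^{n,m}_p$ from the annulus $\{1/m\le|e|<1/n\}$, closes with Gronwall, and repeats the computation with $X^{t,x}$ in place of ${}^{m}\!X^{t,x}$ for (iii). The differences are cosmetic: you split the compensated measure as $\tilde\mu_m=\tilde\mu_n+\un_{\{1/m\le|e|<1/n\}}\tilde\mu$, whereas the paper splits the integrand $\beta({}^{n}\!X_{r-}^{t,x},e)\un_{\{|e|\ge 1/n\}}-\beta({}^{m}\!X_{r-}^{t,x},e)$ inside the $\lambda_m$-integrals, and you prove (i) directly where the paper simply cites Fujiwara--Kunita.
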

\proof See the proof in Appendix.\qed
\def \bs {\bigskip}
\bs
\subsection{Introduction and analyze of the approximating scheme.}
Let us consider the following obliquely RBSDEs with jumps (similar as RBSDEs \eqref{sys2.14} but with $^n\!X$ instead of $X$):\\ For any $\ii$ and $s \in [0,T]$,
 \begin{equation}\label{BSDEtranqué}
\begin{split}
\begin{cases}
\vspace{0.3cm} {}^{n}\!Y^{i,t,x}\in \ss,  {}^{n}\!Z^{i,t,x} \in \hdd, {}^{n}\!V^{i,t,x} \in \hln, \mbox{ and } {}^{n}\!K^{i,t,x}\in \aa;\\ \vspace{0.3cm}
{}^{n}\!Y_s^{i,t,x} = h_i({}^{n}\!X_T^{t,x})+ \int_s^T  \widehat{f}_i(r,{}^{n}\!X_r^{t,x},({}^{n}\!Y_r^{k,t,x})_{k\in \mi},{}^{n}\!Z_r^{i,t,x})dr \\\vspace{0.3cm}
 \qquad \qquad   +{}^{n}\!K_T^{i,t,x} - {}^{n}\!K_s^{i,t,x} -\int_s^T {}^{n}\!Z_r^{i,t,x}dB_r -\int_s^T \int_E {}^{n}\!V_r^{i,t,x}(e) \tilde{\mu}_n(dr,de);\\\vspace{0.3cm}
 {}^{n}\!Y_s^{i,t,x} \geqslant  \displaystyle \max_{j \in \mathcal{I}^{-i}}({}^{n}\!Y_s^{j,t,x} -g_{ij}(s));\\\vspace{0.3cm}
 \textstyle {\int_0^T ({}^{n}\!Y_s^{i,t,x} -\displaystyle \max_{j\in \mathcal{I}^{-i}}({}^{n}\!Y_s^{j,t,x} -g_{ij}(s))) d{}^{n}\!K_s^{i,t,x} = 0.}
\end{cases}
\end{split}
\end{equation}
We then have:
\begin{proposition}\label{firstresult}
\noindent Assume that:

(i) the functions $(g_{ij})_{i,j \in \mathcal{I}}$ and $(h_i)_{i \in \mathcal{I}}$  verify Assumptions (H2)-(H3).

(ii) For any $\ii$, the function $\widehat{f}_i$ 
 verifies (H1) (and then \eqref{bornitude}). 
 
\noindent Then:
 \begin{itemize}
 \item[a)] The system \eqref{BSDEtranqué} has a unique solution $({}^{n}\!Y^{i,t,x}, {{}^{n}\!Z}^{i,t,x}, {}^{n}\!V^{i,t,x}, {}^{n}\!K^{i,t,x})_{i\in \mi}$.
 \item[b)] There exist deterministic continuous uniformly bounded functions  $(u^{i,n})_{\ii}$, defined on $[0,T] \times \R^k$, such that:
\begin{equation}\lb{repynuin}
 \forall s \in [t,T], \, \,{}^{n}\!Y_s^{i,t,x} = u^{i,n}(s,{}^{n}\!X_s^{t,x}).
\end{equation}
\item[c)]For any $i \in \mathcal{I}$,
\begin{align}\label{repvi}
    {}^{n}\!V_s^{i,t,x}(e) = 1_{\{s\ge t\}}( u^{i,n}(s,{}^{n}\!\xtx_{s-}+& \beta({}^{n}\!\xtx_{s-},e))- u^{i,n}(s,{}^{n}\!\xtx_{s-})),\nonumber\\ & \qquad ds\otimes d\mathbb{P} \otimes d\lambda_n \mbox{ on } [0,T] \times \Omega \times E.
\end{align}
\end{itemize} 
\end{proposition}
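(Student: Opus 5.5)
The natural route is to view system \eqref{BSDEtranqué} as an instance of Theorem \ref{hmn}, applied with the finite Lévy measure $\lambda_n$ in place of $\lambda$. Since $\widehat f_i$ does not depend on a jump component, we take $\bar f_i(t,x,\vec y,z,q):=\widehat f_i(t,x,\vec y,z)$ and any admissible $\gamma_i$ (for instance $\gamma_i\equiv 0$). Then (H1) holds for $\bar f_i$, (H2)--(H3) hold by assumption, (H4) is trivial, and $\lambda_n(E)=\int_{\{|e|\ge 1/n\}}\lambda(de)\le n\int_E(1\wedge|e|)\lambda(de)<\infty$ when $n\ge 1$.

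There is one technical point. Theorem \ref{hmn} is stated in the filtration generated by $B$ and a Poisson measure with compensator $\lambda(de)ds$, whereas here the martingale part only involves $\tilde\mu_n$, while the filtration is either $\mathbb F$ or $\mathbb F^{\mu_n}$. I would first solve the system in $\mathbb F^{\mu_n}$, where Theorem \ref{hmn} applies verbatim with $(B,\mu_n)$, and ${}^{n}\!X^{t,x}$ is $\mathbb F^{\mu_n}$-adapted. I would then argue that this solution is also a solution in $\mathbb F$. This holds because $B$ and $\mu_n$ keep their Brownian and Poisson-martingale character with respect to $\mathbb F$: $\mu-\mu_n$ is independent of $(B,\mu_n)$. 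Uniqueness in $\mathbb F$ follows from the standard uniqueness argument for these obliquely reflected systems, namely the switching representation or the comparison/Itô argument of \cite{hamadene2015viscosity,hamadene_mnif-neffati2}, which only uses the Lipschitz property of $\widehat f_i$. In that argument the martingale representation is not needed, since one works with differences of given solutions. This gives a).

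For b), Theorem \ref{hmn} provides continuous functions $u^{i,n}$ of polynomial growth with ${}^{n}\!Y^{i,t,x}_s=u^{i,n}(s,{}^{n}\!X^{t,x}_s)$. It remains to upgrade polynomial growth to uniform boundedness, with a bound independent of $n$. I would obtain this from the optimal switching representation of ${}^{n}\!Y^{i,t,x}_t$, or more simply by comparison. Using \eqref{bornitude}, (H1)-iii) and the boundedness of the $h_i$, the pair $\pm\big(\max_i\|h_i\|_\infty+\hat C T\big)e^{C(T-s)}$ gives a supersolution and a subsolution of the system, since $g_{ij}\ge 0$ makes constant vectors admissible for the obstacle. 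Comparison for the system (using (H1)-iv) then yields $|u^{i,n}|\le C$, with $C$ depending only on $\hat C$, $\|h\|_\infty$, $C_{f_i}$ and $T$. Part c) is then precisely \eqref{repV2} of Theorem \ref{hmn}, written for $\mu_n$: the relation holds $ds\otimes d\mathbb P\otimes d\lambda_n$-a.e.

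I expect the main obstacle to be the filtration issue in a), that is, ensuring the Markovian solution built on $\mathbb F^{\mu_n}$ is the unique solution in the larger filtration. The $n$-independent bound in b) is the other delicate step, because the comparison principle for interconnected obstacles has to be invoked carefully. If that proves awkward, I would instead iterate the Picard/switching scheme, which is monotone under (H1)-iv), and bound each iterate uniformly.
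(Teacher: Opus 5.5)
Your proposal is correct and is essentially the paper's argument. The paper takes a) and the Feynman--Kac formula \eqref{repynuin} from Hamad\`ene--Zhao (Theorem \ref{hz}, which applies because $\widehat f_i$ has no jump argument) and takes c) from the finite-measure result of \cite{hamadene_mnif-neffati2} via $\lambda_n(E)<\infty$. It obtains the $n$-independent bound by comparison with the deterministic solution $\bar Y$ of a scalar BSDE with generator $\bar C+mC^y_{\hat f}\bar Y+C^z_{\hat f}|\bar Z|$, which plays exactly the role of your exponential barrier. There are two small differences. First, you route a) through Theorem \ref{hmn}, which gives uniqueness only among Markovian solutions, so your extra uniqueness step is just Theorem \ref{hz}, and you could cite it directly. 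Second, you make explicit the $\mathbb{F}^{\mu_n}$ versus $\mathbb{F}$ filtration issue, which the paper only touches on in the remark after the proposition.
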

\proof The existence and uniqueness of the solution of system \eqref{BSDEtranqué} and the deterministic functions $(u^{i,n})_{\ii}$ that satisfy (\ref{repynuin}) are given in \cite{hamadene2015viscosity}, pp.1745.  The property \eqref{repvi} is a consequence of the Feynman-Kac representation \eqref{repynuin} and the fact that $\lambda_n(E)<\infty $  (see \cite{hamadene_mnif-neffati2}, Corollary 1 for more details). Finally it remains to show that $u^{i,n}$, $\ii$, are uniformly bounded with respect to $n$.

So let $(\bar Y, \bar Z)$ be the solution of the following standard BSDE: for any $s\leq T$, 
\begin{equation*}
    \begin{cases}
    \bar Y \in \ss,\, \bar Z \in \hdd;\\
    \bar Y_s = \bar C + \int_s^T \big\lbrace \bar C+ m\,C^y_{\hat f}\,\bar Y_r + C^z_{\hat f}\,|\bar Z_r| \big\rbrace dr - \int_s^T \bar Z_rdB_r;
    \end{cases}
\end{equation*}
where $C_{\hat f}^y$ , $C_{\hat f}^z$ are the maximum of the Lipschitz constants of the functions $(\widehat f_i)_{\ii}$ w.r.t. $\vy$ and $z$ respectively and $\bar C$ is a constant such that for any $\ii$, 
$$|h_i(x)|+|\widehat{f}_i(t,x,\vec 0,0)|\leq \bar C, \,\,\forall (t,x)\in [0,T]\times \R^k.
$$
Then by comparison, which is valid in our case since $(\widehat f_i)_{\ii}$, do not depend the jump component, we have:
\begin{eqnarray*}\label{majbor}
|^nY_s^{i,t,x}|\le \bar Y_s,\,\, \forall s\in [t,T].
\end{eqnarray*}
But $\bar Z=0$, $ds\otimes d\pr$-a.e and $\bar Y$ is deterministic and continuous. Finally it is enough to take $s=t$ in the previous equality to conclude that $(u^{i,n})_{\ii}$ are uniformly bounded. 
\medskip
\begin{remarque}
    The following characterization of ${}^{n}\!Y^{i,t,x}$ as a Snell envelope holds true: 
\begin{align}\label{envsnell}
     \forall s\leq T,\,\,{}^{n}\!Y_s^{i,t,x}= \displaystyle\mbox{esssup}_{\tau \geq s}\E\Big[\int_s^{\tau}&\widehat{f}_i(r,{}^{n}\!X_r^{t,x},({}^{n}\!Y_r^{k,t,x})_{k\in \mi},{}^{n}\!Z_r^{i,t,x})dr +  h_i({}^{n}\!X_{T}^{t,x})\mathbf{1}_{\lbrace \tau=T\rbrace}\nonumber\\
     &+\max_{j \in \mathcal{I}^{-i}}({}^{n}\!Y_{\tau}^{j,t,x} -g_{ij}(\tau))\mathbf{1}_{\lbrace \tau<T\rbrace}\big|\mathcal{F}_s\Big]
\end{align} since its jump times are inaccessible. The stopping times $\t$ are w.r.t $\mathbb{F}$ since 
$\mu_n$ is a truncation of $\mu$ and then $\mathbb{F}^{\mu_n}$-martingales are $\mathbb{F}$-martingales (see e.g. \cite{hamahassani}, pp. 134 for more details). 
\qed
\end{remarque}
Now we are going to focus on the convergence of the sequences $(u^{i,n})_{\ii}$. 
\begin{proposition}\label{cvuniloc}Assume that assumptions $(i)-(ii)$ of Proposition \ref{firstresult} hold. Then there exist continuous bounded functions $(\check u^i)_{\ii}$ such that for $\ii$, $M\ge 1$,
 $$
 \sup_{t\in [0,T], |x|\le M}|u^{i,n}(t,x)-\check u^i(t,x)|\rw 0 \mbox{ as }n\rw \infty. 
 $$    
\end{proposition}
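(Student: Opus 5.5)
We will show that for fixed $(t,x)$ the sequence $(u^{i,n}(t,x))_{n\ge1}$ is Cauchy, uniformly for $t\in[0,T]$ and $|x|\le M$. We then define $\check u^i$ as the limit. Its continuity follows from the uniform-on-compacts convergence of the continuous functions $u^{i,n}$, and its boundedness from the uniform bound of Proposition \ref{firstresult}~b).

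\emph{Embedding both systems in one filtration.} Fix $m\ge n$. Since $\mathbb{F}^{\mu_n}$-martingales are $\mathbb{F}^{\mu_m}$- (and $\mathbb{F}$-) martingales, the solutions of \eqref{BSDEtranqué} for $n$ and for $m$ can both be viewed as solutions of obliquely reflected BSDEs in $\mathbb{F}$ with respect to the same measure $\tilde\mu$: simply set ${}^{n}\!V(e)=0$ on $\{|e|<1/n\}$. The two systems then differ only through their forward processes ${}^{n}\!X^{t,x}$ and ${}^{m}\!X^{t,x}$. The generators $\widehat f_i$ do not depend on the jump component, so comparison holds and no monotonicity assumption is needed.

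\emph{Stability estimate.} We aim at
\begin{equation*}
|u^{i,n}(t,x)-u^{i,m}(t,x)|^2\le C\,\E\Big[|h(\,{}^{n}\!X^{t,x}_T)-h(\,{}^{m}\!X^{t,x}_T)|^2+\int_t^T\sup_{i,\vec y,z}|\widehat f_i(r,{}^{n}\!X_r^{t,x},\vec y,z)-\widehat f_i(r,{}^{m}\!X_r^{t,x},\vec y,z)|^2dr\Big].
\end{equation*}
There are two ways to obtain this.

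The first uses the switching representation: $u^{i,n}(t,x)$ is the value of an optimal switching problem with payoff built on $({}^{n}\!Y^{k})_{k}$. One compares the two values via the Snell envelope characterization \eqref{envsnell}, or via the standard verification argument that writes $Y^i$ as the esssup over admissible strategies of the solutions of one-dimensional BSDEs. The difference is then controlled, using Lipschitz stability for one-dimensional BSDEs, by the differences in terminal and driver data.

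The second route avoids the verification theorem and uses the Picard/penalization construction of Hamadène–Zhao. Introduce the increasing iterations ${}^{n}\!Y^{i,(q)}$, which converge to ${}^{n}\!Y^{i}$ uniformly in $n$ because all bounds depend only on the constants $\bar C,C^y_{\hat f},C^z_{\hat f}$. Then prove the stability estimate at each level $q$ through one-dimensional reflected BSDE stability.

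This step is the main obstacle: obliquely reflected systems have no direct $L^2$ a priori estimate. I would first try the switching-representation route, since it reduces the question to non-reflected BSDE estimates.

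\emph{Bounding the right-hand side.} Let $\varepsilon>0$. Because $h_i$ is bounded, the terminal term is at most $\E[\rho_{R}(|{}^{n}\!X_T-{}^{m}\!X_T|)^2]+C\,\p(\sup|{}^{n}\!X|+\sup|{}^{m}\!X|>R)$, where $\rho_R$ is the modulus of continuity of $h_i$ on $\bar B(0,R)$. The driver term is bounded in the same way, using $\Phi^i_R$ from Lemma~2.1 together with the boundedness of the driver difference. By \eqref{estimXn} and Markov's inequality, $\p(\sup_s|{}^{n}\!X_s^{t,x}|>R)\le C(1+M^2)/R^2$ uniformly in $|x|\le M$, $t$ and $n$; choose $R$ so that this is below $\varepsilon$. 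By \eqref{convergenceXn}, $\E[\sup_s|{}^{n}\!X_s-{}^{m}\!X_s|^2]\le C\Lambda^{n,m}_1$. This constant does not depend on $(t,x)$ and tends to $0$ as $n,m\to\infty$ because $\lambda$ integrates $1\wedge|e|^2$. Using the concavity of $\Phi^i_R$ (Jensen) and the uniform continuity of $h_i$ on compacts, we obtain $\limsup_{n,m}\sup_{t,|x|\le M}|u^{i,n}-u^{i,m}|\le C\varepsilon$, which gives the Cauchy property.
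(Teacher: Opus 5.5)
Your reduction (common filtration, switching representation, localisation on $\{\sup|X|\le R\}$) is sound. The stability inequality you aim at can also be obtained along your first route. Once the $x$-increment of the driver is replaced by $\Gamma_r:=\sum_i\sup_{\vec y,z}|\widehat f_i(r,{}^{n}\!X_r^{t,x},\vec y,z)-\widehat f_i(r,{}^{m}\!X_r^{t,x},\vec y,z)|$, the conditional $L^2$ bound for ${}^{n}\!\bar P^{\delta}-{}^{m}\!P^{\delta}$ no longer depends on $\delta$, so it survives the esssup. The coupling through the different frozen vectors $({}^{n}\!Y^k)_k$, $({}^{m}\!Y^k)_k$ is then removed by Gronwall.

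The gap is in the last step, where you invoke ``the boundedness of the driver difference'' to make the contribution of $\{\sup_s|{}^{n}\!X_s|+\sup_s|{}^{m}\!X_s|>R\}$ small. (H1) gives no such bound. \eqref{newhyp1} controls $\Gamma_r$ only when both points lie in $\bar B(0,R)$. Globally, (H1)(ii)--(iii) only yield $|\widehat f_i(r,x,\vec y,z)-\widehat f_i(r,x',\vec y,z)|\le 2\hat C+2C^y_{\hat f}|\vec y|+2C^z_{\hat f}|z|$, which is unbounded in $z$. For example, take $\widehat f(t,x,z)=|z_1-\theta(x)|-|\theta(x)|$ with $\theta\ge 0$ continuous. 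It satisfies (H1): it vanishes at $z=0$, is $1$-Lipschitz in $z$, and its $x$-increments are bounded by $2|\theta(x)-\theta(x')|$ uniformly in $z$. Yet $\sup_z|\widehat f(t,x,z)-\widehat f(t,x',z)|=2|\theta(x)-\theta(x')|$. With $\theta(x)=e^{|x|^4}$, and only polynomial moments of ${}^{n}\!X$ available, $\E\int_t^T\Gamma_r^2dr$ can be infinite, so your right-hand side does not tend to $0$.

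On the bad event you must therefore evaluate the driver at the actual martingale integrand. This gives a term like $C(1+|N_r|)\un_{\{\sup_r(|{}^{m}\!X_r|+|{}^{n}\!X_r|)\ge R\}}$, to be controlled by Cauchy--Schwarz and Markov, which requires $\E[(\int_0^T|N_r|^2dr)^2]\le C$ uniformly in $n,m$. Such a bound depends on the moments of $A^{\delta}_T$. It is not uniform over the admissible class, so you cannot keep the esssup: you need one specific strategy with controlled switching cost. This is the missing idea.

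The paper introduces the system with data $F^{n,m}_i=\widehat f_i(\cdot,{}^{n}\!X,({}^{n}\!Y^k)_k,z)\vee\widehat f_i(\cdot,{}^{m}\!X,({}^{m}\!Y^k)_k,z)$ and $H^{n,m}_i=h_i({}^{n}\!X_T)\vee h_i({}^{m}\!X_T)$, which is well posed because $g_{ij}$ does not depend on $x$. It takes the optimal strategy $\bar\delta$ of this system and combines comparison with \eqref{fv2} and \eqref{fv4} to get \eqref{comp1} for this single $\bar\delta$. The uniform estimate \eqref{99}, $\E[(A^{\bar\delta}_T)^q]\le C_q$, then yields \eqref{estimhdnm} and hence $\Xi(\tau)\le C(1+|x|^2)R^{-2}$. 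Using the optimal strategies of the $n$- and $m$-problems separately, both admissible in $\mathbb{F}^{\mu_m}$, together with Lemma \ref{estima}, would serve the same purpose. Without some such uniform moment estimate on the switching cost, your argument does not close.
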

\proof The proof is rather long and divided into two steps. The main difficulty comes from the fact that the Poisson random measures $\mu_n$ change and do not generate the same filtration. However there exists an embedding phenomenon, i.e. $\mathbb{F}^{\mu_n}\subset \mathbb{F}^{\mu_m}$ if $m\ge n$, which we are going to exploit.  
\medskip 

\noindent \tbf{Step 1}: Representation of ${}^{n}\!Y^{i,t,x}$ as a value function of an optimal switching problem.
\bs

 \noindent In this step, we aim at representing ${}^{n}\!Y^{i,t,x}$ as the value of  an optimal switching problem. 
 Let  $\delta:= (\theta_k, \alpha_k)_{k \geq 0}$ be an admissible strategy of switching, $i.e.$, $(\theta_k)_{k \geq 0}$ is an increasing sequence of $\mathbb{F}^{\mu_n}$-stopping times with values in $[0,T]$ such that $\mathbb{P}[\theta_k < T, \forall k \geq 0] = 0$ and for any $k \geq 0$, $\alpha_k$ is a random variable $\mathcal{F}^{\mu_n}_{\theta_{k}}$-measurable with values in $\mathcal{I}$. Next, with the admissible strategy $\delta:= (\theta_k, \alpha_k)_{k \geq 0}$ is associated a switching cost process $(A_s^{\delta})_{s \leq T}$ defined by:
 \begin{equation}
 \forall s < T,\,\, A^{\delta}_s := \displaystyle \sum_{k \geq 1} g_{\alpha_{k-1} \alpha_{k}}(\theta_k)\mathbf{1}_{\lbrace \theta_{k} \leq s \rbrace} \mbox{ and } A^{\delta}_T= \lim_{s\rightarrow T}A^{\delta}_s.
 \end{equation}
The process $(A^{\delta}_s)_{s \leq T}$ is RCLL and non-decreasing. Now, for $s \leq T$, let us set $\eta_s:= \alpha_0\un_{\lbrace\theta_{0\rbrace}}(s) +\displaystyle \sum_{k\geq 1}\alpha_{k}\mathbf{1}_{\{\theta_{k} <s\leq \theta_{k+1}\}}$ which stands for the mode indicator of the system at time $s$. The process $(\eta_s)_{s\le T}$ is in bijection with the strategy $\delta$. Finally, for any fixed $s\leq T$ and $i \in \mathcal{I}$, let us denote by $\aimns$ the following set of admissible strategies:
\begin{align*}
\mathcal{A}_s^{i,\mu_n}:= \lbrace\delta:= (\theta_k, \alpha_k)_{k \geq 0}& \ \mbox{admissible strategy such that}\ \theta_0=s, \alpha_0=i \\
&\mbox{and}\ \mathbb{E}[(A^{\delta}_T)^2] < \infty \rbrace.
\end{align*}
Let  $\delta:= (\theta_k, \alpha_k)_{k \geq 0} \in \aimn_t$ and let us define the triplet of $\mathbb{F}^{\mu_n}$-progressively measurable processes $({}^{n}\!P_s^{\d,t,x},{}^{n}\!N_s^{\d,t,x},{}^{n}\!Q_s^{\d,t,x})_{s \leq T}$ as follows:  \begin{equation}\label{equ8}\left\{\begin{array}{l}
 {}^{n}\!P^{\d,t,x} \mbox{ is RCLL and } \E[\sup_{s\le T}|{}^{n}\!P^{\d,t,x}_s|^2]<\infty \,;\\\\
 \E[\int_0^T|{}^{n}\!N^{\d,t,x}_s|^2ds]+\E[\int_0^T\int_E |{}^{n}\!Q^{\d,t,x}_s(e)|^2\lambda_n(de)ds]<\infty;\\\\ 
{}^{n}\!P_s^{\d,t,x} = h^{\d}({}^{n}\!X_T^{t,x})  + \int_s^T  \widehat{f}^{\d}(r,{}^{n}\!X_r^{t,x},\ynr,{}^{n}\!N_r^{\d,t,x})dr - \textstyle \int_s^T {}^{n}\!N_r^{\d,t,x}dB_r \\\\
 \qquad \qquad - \textstyle\int_s^T \int_E {}^{n}\!Q_r^{\d,t,x}(e)\tilde{\mu}_n(dr,de)- A_T^{\d} + A_s^{\d},\, s\le T, \end{array}\right.
 \end{equation}
 where for any $x\in \rk$ and $z\in \R^d$, 
\begin{align}\label{equfd}
 h^{\d}(x)&:= \displaystyle \sum_{k \geq 0}h_{\alpha_{k}}(x) \mathbf{1}_{[\theta_{k} \leq T < \theta_{k+1}]}\quad\mbox{ and }\nonumber\\[3pt]
\widehat{f}^{\delta}(s,x,\yns,z) &:=  \displaystyle \sum_{k \geq 0} \widehat{f}_{\alpha_{k}}(s,x,\yns ,z)\mathbf{1}_{[\theta_{k} \leq s < \theta_{k+1}]}.
 \end{align}
 Those series contain only a finite many terms as $\d$ is admissible.
Note that, in \eqref{equ8}, the generators $\widehat{f}^{\d}$ do not depend  neither on ${}^{n}\!P^{\d,t,x}$ nor on  ${}^{n}\!Q^{\d,t,x} \in \hln.$ However it depends on $\ynr$ which is already defined.
Next, by a change of variable, the existence of $({}^{n}\!P^{\d,t,x} - A^{\d},{}^{n}\!N^{\d,t,x}, {}^{n}\!Q^{\d,t,x})$ stems from the standard existence result of solutions of BSDEs with jumps by Tang-Li \cite{tang1994necessary} since its generator$ \widehat{f}^{\delta}(s,{}^{n}\!X_s^{t,x},\yns,z)$ is Lipschitz $\wt$ $z$ and $A_T^\d$ is square integrable. On the other hand let 
$({}^{n}\!\underbar Y^{i,t,x},  {}^{n}\!\underbar Z^{i,t,x}, {}^{n}\!\underbar V^{i,t,x}, {}^{n}\!\underbar K^{i,t,x})_{\ii}$ be the solution of the following system: $\forall i = 1,...,m$ and  $s \in [0,T]$,
 \begin{equation}
 \label{BSDEtronquebis1}
\begin{split}
\begin{cases}
\vspace{0.3cm} {}^{n}\!\underbar Y^{i,t,x}\in \ss,  {}^{n}\!\underbar Z^{i,t,x} \in \hdd, {}^{n}\!\underbar V^{i,t,x} \in \hln \mbox{ and } {}^{n}\!\underbar K^{i,t,x}\in \aa;\\ \vspace{0.3cm}
{}^{n}\!\underbar Y_s^{i,t,x} = h_i({}^{n}\!X_T^{t,x})+ \int_s^T  \widehat{f}_i(r,{}^{n}\!X_r^{t,x},({}^{n}\!Y_r^{k,t,x})_{k\in \mi},{}^{n}\!\underbar Z_r^{i,t,x})dr +{}^{n}\!\underbar K_T^{i,t,x} - {}^{n}\!\underbar K_s^{i,t,x} \\\vspace{0.3cm}
 \qquad \qquad   -\int_s^T {}^{n}\!\underbar Z_r^{i,t,x}dB_r -\int_s^T \int_E {}^{n}\!\underbar V_r^{i,t,x}(e) \tilde{\mu}_n(dr,de);\\\vspace{0.3cm}
 {}^{n}\!\underbar Y_s^{i,t,x} \geqslant  \displaystyle \max_{j \in \mathcal{I}^{-i}}({}^{n}\!\underbar Y_s^{j,t,x} -g_{ij}(s));\\\vspace{0.3cm}
 \textstyle {\int_0^T ({}^{n}\!\underbar Y_s^{i,t,x} -\displaystyle \max_{j\in \mathcal{I}^{-i}}({}^{n}\!\underbar Y_s^{j,t,x} -g_{ij}(s))) d{}^{n}\!\underbar K_s^{i,t,x} = 0},
\end{cases}
\end{split}
\end{equation}
 whose solution exists and is unique, as mentionned previously. Therefore the link between the solution of system \eqref{BSDEtronquebis1} and optimal switching problems implies that (one can see \cite{hamadene2015systems} for more details) :
 \begin{equation}\label{fv1}
 {}^{n}\!\underbar Y_s^{i,t,x}=\mbox{esssup}_{\d \in \mathcal{A}_s^{i,\mu_n}}({}^{n}\!P_s^{\d,t,x} - A_s^{\d}).\end{equation} But the processes 
 $({}^{n}\!Y^{i,t,x},{}^{n}\!Z^{i,t,x},{}^{n}\!V^{i,t,x},{}^{n}\!K^{i,t,x})_{\ii}$ solution of \eqref{BSDEtranqué}
 is also a solution of \eqref{BSDEtronquebis1}, therefore by uniqueness of the solution of \eqref{BSDEtronquebis1} one deduces that         
\begin{equation}\label{fv2}
 {}^{n}\!\underbar Y_s^{j,t,x}={}^{n}\!Y_s^{i,t,x} = \displaystyle \underset{\d \in \mathcal{A}_s^{i,\mu_n}}{\mbox{esssup}}\,({}^{n}\!P_s^{\d,t,x} - A_s^{\d})  = ({}^{n}\!P_s^{\delta^*\!,t,x} - A_s^{\delta^*}).
 \end{equation}
for some $\delta^{*} \in \mathcal{A}_s^{i,\mu_n}$. It means that $\d^*$ is an optimal strategy of this switching control problem. 
\medskip

\noindent \tbf{Step 2}: Uniform convergence of $(u^{i,n})_{n\ge 1}$, $\ii$, on compact sets of $[0,T]\times \R^k$.\medskip

\noindent So let $m\ge n \geq 1$. Then we obviously have $\mathbb{F}^{\mu_n}\subset \mathbb{F}^{\mu_m}$. Next for any $\ii$ and $s\in [t,T]$, let us set:
\begin{align*}
F_i^{n,m}( s,\omega,z) &:=
  \widehat{f}_i(s,{}^{n}\!\xtx_s, \yns, z) \vee  \widehat{f}_i(s,{}^{m}\!\xtx_s, \yms, z),\,z\in \R^d, \mbox{ and }\\[3pt]
H_i^{n,m}&:=h_{i}({}^{n}\!X_T^{t,x}) \vee   h_{i}({}^{m}\!X_T^{t,x}).
\end{align*}
Let us consider now the solution, denoted by  $({}^{n,m}\! Y^{i,t,x}, {}^{n,m}\! Z^{i,t,x}, {}^{n,m}\! V^{i,t,x},{}^{n,m}\! K^{i,t,x})_{\ii}$, of the obliquely reflected BSDEs with jumps associated with $((F^{n,m}_i)_{i\in \mathcal{I}}, (g_{ij})_{i,j \in \mathcal{I}},(H_i^{n,m})_{i \in \mathcal{I}})$ which exists and is unique since 
$$\forall \ii, H_i^{n,m}\ge \max_{j\neq i }(H_j^{n,m}-g_{ij}(T)).$$
Note that this condition is satisfied since the switching costs $g_{ij}$, $i,j\in \cal I$, do not depend on $x$ and this is the main reason of assuming $g_{ij}$ independent of $x$. Next, 
let $\stt$ and $\delta:= (\theta_k, \alpha_k)_{k \geq 0}$ a strategy of $\aimm_s$ and let us define the triplet of $\mathbb{F}^{\mu_m}$-progressively measurable processes $({}^{n,m}\!P_r^{\d,t,x},{}^{n,m}\!N_r^{\d,t,x},{}^{n,m}\!Q_r^{\d,t,x})_{r \leq T}$ as follows:  \begin{equation}\label{eq8x}\left\{\begin{array}{l}
 {}^{n,m}\!P^{\d,t,x} \mbox{ is RCLL and } \E[\sup_{r\le T}|{}^{n,m}\!P^{\d,t,x}_r|^2]<\infty; \\\\
 \E[\int_0^T|{}^{n,m}\!N^{\d,t,x}_r|^2dr]+\E[\int_0^T\int_E |{}^{n,m}\!Q^{\d,t,x}_r(e)|^2\lambda_m(de)dr]<\infty;\\\\ 
{}^{n,m}\!P_\t^{\d,t,x} = H_\d^{n,m}  + \int_\t^T  F_\d^{n,m}( r,{}^{n,m}\!N_r^{\d,t,x})dr - \textstyle \int_\t^T {}^{n,m}\!N_r^{\d,t,x}dB_r\\\\ \qquad \qquad \qq - \textstyle\int_\t^T \int_E {}^{n,m}\!Q_r^{\d,t,x}(e)\tilde{\mu}_m(dr,de)- A_T^{\d} + A_\t^{\d},\,\, \t\le T, \end{array}\right.
 \end{equation}
 where, as in \eqref{equfd}, 
\begin{align}\label{eqfd2}
 H_\d^{n,m}:= \displaystyle \sum_{k \geq 0}H_{\alpha_k}^{n,m}\mathbf{1}_{[\theta_{k} \leq T < \theta_{k+1}]}\mbox{ and }
F_\d^{n,m}( r,z)=\displaystyle \sum_{k \geq 0} F^{n,m}_{\alpha_{k}}(r,z)\mathbf{1}_{[\theta_{k} \leq r < \theta_{k+1}]}.
 \end{align}
As in \eqref{fv1}, we have: 
\begin{equation}\label{{fv3}}
 {}^{n,m}\! {Y}_s^{i,t,x} = \displaystyle \mbox{esssup}_{\d \in \aimm_s}({}^{n,m}\! {P}_s^{\d,t,x} - A_s^{\d,t,x}) = ({}^{n,m}\!  P_s^{\bar\delta,t,x} - A_s^{\bar\delta,t,x}),
\end{equation}
where $\bar \d$ belongs to $\aimm_s$ and depends on $n,m$ which we omit as there is no possible confusion ($\bar \d$ is the optimal strategy for the underlying optimal switching problem). Therefore by \eqref{99}, we have the following estimate for $\bar \d$: For any $q\ge 1$, there exists a constant $C_q$ which does not depend on $n,m$ such that 
\begin{equation}\lb{apdix31}
\E[(A_T^{\bar \d})^q]\le C_q.\end{equation}
Next the assumptions {\bf{(H1)}} and {\bf{(H3)}} on $h_i(.)$ and $f_i(.)$, combined with \eqref{apdix31}, and finally by using basic methods in BSDEs (of which It\^o's formula with $w\in \R\mapsto w^q$ ($q\ge 2$)) one deduces that for any $q\ge 1$, there exists a constant which we still denote $C_q$ such that:  
\begin{equation}\lb{estimhdnm}
\E[\{\int_0^T|{}^{n,m}\!N^{\bar \d,t,x}_r|^2dr+\int_0^T\int_E |{}^{n,m}\!Q^{\bar \d,t,x}_r(e)|^2\lambda_m(de)dr\}^q]\le C_q.
\end{equation}
{\bc The proof of inequality~\eqref{estimhdnm} is analogous to that of inequality~\eqref{estilemanx} in the proof of Lemma~\ref{estima}.}
Now recall that 
$({}^{n}\!Y^{i,t,x},{}^{n}\!Z^{i,t,x},{}^{n}\!V^{i,t,x}, {}^{n}\!K^{i,t,x})_{\ii}$ is a solution of the following system of obliquely RBSDEs with jumps: $\forall \ii$ and  $s \in [0,T]$,
 \begin{align}\label{BSDEtranq2}
\begin{cases}
{}^{n}\!Y_s^{i,t,x} = h_i({}^{n}\!X_T^{t,x})+ \int_s^T  \widehat{f}_i(r,{}^{n}\!X_r^{t,x},({}^{n}\!Y_r^{k,t,x})_{k\in \mi},{}^{n}\!Z_r^{i,t,x})dr \\[5pt]
 \qquad \qquad   +{}^{n}\!K_T^{i,t,x} - {}^{n}\!K_s^{i,t,x} -\int_s^T {}^{n}\!Z_r^{i,t,x}dB_r -\int_s^T \int_E {}^{n}\!V_r^{i,t,x}(e) \tilde{\mu}_n(dr,de);\\[5pt]
 {}^{n}\!Y_s^{i,t,x} \geqslant  \displaystyle \max_{j \in \mathcal{I}^{-i}}({}^{n}\!Y_s^{j,t,x} -g_{ij}(s));\\[5pt]
 \textstyle {\int_0^T ({}^{n}\!Y_s^{i,t,x} -\displaystyle \max_{j\in \mathcal{I}^{-i}}({}^{n}\!Y_s^{j,t,x} -g_{ij}(s))) d{}^{n}\!K_s^{i,t,x} = 0.}
\end{cases}
\end{align}
But $\tilde{\mu}_n=\tilde{\mu}_m1_{\{|e|\ge \frac{1}{n}\}}$ and 
$\mathbb{F}^{\mu_n}\subset \mathbb{F}^{\mu_m}$, then $({}^{n}\!Y^{i,t,x},{}^{n}\!Z^{i,t,x},{}^{n}\!V^{i,t,x}1_{\{|e|\ge \frac{1}{n}\}}, {}^{n}\!K^{i,t,x})_{\ii}$ is a solution of the following system of obliquely RBSDEs with jumps: $\forall i = 1,...,m$ and  $s \in [0,T]$,
 \begin{align}\label{BSDEtranq21}
\begin{cases}
{}^{n}\!Y_s^{i,t,x} = h_i({}^{n}\!X_T^{t,x})+ \int_s^T  \widehat{f}_i(r,{}^{n}\!X_r^{t,x},({}^{n}\!Y_r^{k,t,x})_{k\in \mi},{}^{n}\!Z_r^{i,t,x})dr \\[5pt]
 \qquad \qquad   +{}^{n}\!K_T^{i,t,x} - {}^{n}\!K_s^{i,t,x} -\int_s^T {}^{n}\!Z_r^{i,t,x}dB_r -\int_s^T \int_E {}^{n}\!V_r^{i,t,x}(e)1_{\{|e|\ge \frac{1}{n}\}} \tilde{\mu}_m(dr,de);\\[5pt]
 {}^{n}\!Y_s^{i,t,x} \geqslant  \displaystyle \max_{j \in \mathcal{I}^{-i}}({}^{n}\!Y_s^{j,t,x} -g_{ij}(s));\\[5pt]
 \textstyle {\int_0^T ({}^{n}\!Y_s^{i,t,x} -\displaystyle \max_{j\in \mathcal{I}^{-i}}({}^{n}\!Y_s^{j,t,x} -g_{ij}(s))) d{}^{n}\!K_s^{i,t,x} = 0.}
\end{cases}
\end{align}
Therefore we also have:
\begin{equation}\label{fv4}
 {}^{n}\! {Y}_s^{i,t,x} = \displaystyle \mbox{esssup}_{\d \in \aimm_s}({}^{n}\! \bar {P}_s^{\d,t,x} - A_s^{\d,t,x}),
\end{equation}
where for any $\d \in \aimm_s$, $({}^{n}\! \bar {P}^{\d,t,x},{}^{n}\! \bar {N}^{\d,t,x},{}^{n}\! \bar {Q}^{\d,t,x})$ is the solution of 
\begin{equation}\label{equ81}\left\{\begin{array}{l}
 {}^{n}\!\bar P^{\d,t,x} \mbox{ is RCLL and } \E[\sup_{s\le T}|{}^{n}\!\bar P^{\d,t,x}_s|^2]<\infty \,;\\\\
 \E[\int_0^T|{}^{n}\!\bar N^{\d,t,x}_s|^2ds]+\E[\int_0^T\int_E |{}^{n}\!\bar Q^{\d,t,x}_s(e)|^2\lambda_m(de)ds]<\infty;\\\\ 
{}^{n}\!\bar P_s^{\d,t,x} = h^{\d}({}^{n}\!X_T^{t,x})  + \int_s^T  \widehat{f}^{\d}(r,{}^{n}\!X_r^{t,x},\ynr,{}^{n}\!\bar N_r^{\d,t,x})dr - \textstyle \int_s^T {}^{n}\!\bar N_r^{\d,t,x}dB_r \\\\
 \qquad \qquad - \textstyle\int_s^T \int_E {}^{n}\!Q_r^{\d,t,x}(e)\tilde{\mu}_m(dr,de)- A_T^{\d} + A_s^{\d},\, s\le T. \end{array}\right.
 \end{equation}
Next by the comparison result (see Proposition 4.2 in \cite{hamadene2015viscosity}), between the solutions ${}^{n}\!Y^{i,t,x}$ and ${}^{n,m}\!{Y}^{i,t,x}$, and ${}^{m}\!Y^{i,t,x}$ and ${}^{n,m}\!{Y}^{i,t,x}$ (this is possible since: (i) the generators of the systems do not depend on the jump parts; (ii) the corresponding generators and terminal values are comparable), one deduces
that: For any $\ii$,
  \begin{equation*}
  {}^{n}\!Y_s^{i,t,x} \leq {}^{n,m}\! {Y}_s^{i,t,x}\, \, \,   \mbox{ and } 
  \, \, \,   {}^{m}\!Y_s^{i,t,x} \leq {}^{n,m}\! {Y}_s^{i,t,x}. 
  \end{equation*}
   This combined with \eqref{fv2} and \eqref{fv4}, lead to: 
   \begin{equation*}
  {}^{n}\! \bar {P}_s^{\bar \d,t,x}-A_s^{\bar \d,t,x} \leq {}^{n}\!Y_s^{i,t,x} \leq {}^{n,m}\! {Y}_s^{i,t,x}={}^{n,m}\! {P}_s^{{\bar \delta}}-A_s^{\bar \d,t,x}\, \,   \mbox{ and }
  \end{equation*}
  \begin{equation*}
  \, \, \,  {}^{m}\!P_s^{{\bar \delta}} -A_s^{\bar \d,t,x}\leq {}^{m}\!Y_s^{i,t,x} \leq {}^{n,m}\! {Y}_s^{i,t,x}= {}^{n,m}\! {P}_s^{{\bar \delta}}-A_s^{\bar \d,t,x},
  \end{equation*}
  which implies: 
\begin{equation}\label{comp1}
\vspace{0.2cm}|{}^{n}\!Y_s^{i,t,x} - {}^{m}\!Y_s^{i,t,x}| \leq  |{}^{n,m}\! {P}_s^{{\bar \delta}} - {}^{n}\!\bar P_s^{{\bar \delta}}| + |{}^{n,m}\! {P}_s^{{\bar \delta}} - {}^{m}\!P_s^{{\bar \delta}}|.
\end{equation}
Since both terms on the right-hand side of \eqref{comp1} are treated similarly, we focus only on the first one. Applying It\^o's formula with $e^{\alpha \t}|{}^{n,m}\! {P}_\t^{{\bar \delta},t,x} - {}^{n}\!\bar P_\t^{{\bar \delta},t,x}|^2$ and $\alpha \in \R$, yields:  $ \forall \t\le T$,
\begin{equation*}
\begin{array}{ll}
& e^{\alpha \t}|{}^{n,m}\! {P}_\t^{{\bar \delta}} - {}^{n}\!\bar P_\t^{{\bar \delta}}|^2 +  \int_\t^T e^{\alpha r} |{}^{n,m}\! {N}_r^{{\bar \delta}} - {}^{n}\!\bar N_r^{{\bar \delta}}|^2dr +\textstyle{\sum_{\t< r\le T}}\,e^{\alpha r}\Delta_r ({}^{n,m}\! {P}^{{\bar \delta}} - {}^{n}\!\bar P^{{\bar \delta}})^2    \\\\[5pt]
&=  e^{\alpha T} |h^{\bar \d}({}^{m}X_T^{t,x})\vee h^{\bar \d}({}^{n}X_T^{t,x}) -h^{\bar \d}({}^{n}X_T^{t,x})|^2 -\alpha  \int_\t^T  e^{\alpha r} |{}^{n,m}\! {P}_r^{{\bar \delta}} - {}^{n}\!\bar P_r^{{\bar \delta}}|^2 dr  \\\\[5pt] 
&  \qquad \qquad  + 2  \int_\t^T e^{\alpha r} ({}^{n,m}\! {P}_r^{{\bar \delta}} - {}^{n}\!\bar P_r^{{\bar \delta}})
\times \Big\lbrace \widehat{f}^{\bar \d}(r, {}^{m}\!\xtx_r,\ymr,{}^{n,m}\! {N}_r^{{\bar  \delta}})  \vee  \\\\[5pt]
& \qquad  \qq \widehat{f}^{\bar \d}(r, {}^{n}\!\xtx_r,\ynr,{}^{n,m}\! {N}_r^{{\bar \delta}})- \widehat{f}^{\bar \d}(r, {}^{n}\!\xtx_r,\ynr,{}^{n}\! \bar {N}_r^{{\bar  \delta}}) \Big\rbrace dr \\\\[5pt]
&\qquad \qq -2 \int_\t^T e^{\alpha r} ({}^{n,m}\! {P}_r^{{\bar  \delta}} - {}^{n}\!\bar P_r^{{\bar  \delta}})({}^{n,m}\! {N}_r^{{\bar  \delta}} - {}^{n}\!\bar N_r^{{\bar  \delta}})dB_r\\\\[5pt]
& \qquad \qq -2 \int_\t^T\int_E e^{\alpha r} ({}^{n,m}\! {P}_{r-}^{{\bar  \delta}} - {}^{n}\!\bar P_{r-}^{{\bar  \delta}})({}^{n,m}\!  {Q}_r^{{\bar  \delta}}(e) - {}^{n}\!\bar Q_r^{{\bar  \delta}}(e))\tilde{\mu}_m(de,dr).
\end{array}
\end{equation*} 
Next, taking  expectation and using the inequality  $|x\vee y-y|\le |x-y|$ $(x,y \in \R$) to obtain: 
$\forall \t \in [0,T]$,
\begin{equation}\label{3.32}
\begin{array}{ll}
& \E\Big[ e^{\alpha \t}|{}^{n,m}\! {P}_\t ^{{\bar \delta}} - {}^{n}\!\bar P_\t ^{{\bar \delta}}|^2 +  \int_\t^T e^{\alpha r} |{}^{n,m}\! {N}_r^{{\bar \delta}} - {}^{n}\!\bar N_r^{{\bar \delta}}|^2dr +\textstyle{\sum_{\t< r\le T}}\,e^{\alpha r} \Delta_r ({}^{n,m}\! {P}^{{\bar \delta}} - {}^{n}\!\bar P^{{\bar \delta}})^2   \Big]\\\\
& \qq\le \E\Big[ e^{\alpha T} |h^{\bar \d}({}^{m}X_T^{t,x}) -h^{\bar \d}({}^{n}X_T^{t,x})|^2 -
\alpha  \int_\t^T  e^{\alpha r} |{}^{n,m}\! {P}_r^{{\bar \delta}} - {}^{n}\!\bar P_r^{{\bar \delta}}|^2 dr \\\\
&\qq  + 2  \int_\t^T e^{\alpha r} |{}^{n,m}\! {P}_r^{{\bar \delta}} - {}^{n}\!\bar P_r^{{\bar \delta}}|\times \Big\lbrace 
|\underbrace{ \widehat{f}^{\bar \d}(r, {}^{m}\!\xtx_r,\ymr,{}^{n,m}\! {N}_r^{{\bar \delta}}) - \widehat{f}^{\bar \d}(r, {}^{n}\!\xtx_r,\ynr,{}^{n,m}\! {N}_r^{{\bar \delta}})}_{\Sigma^{n,m,\bar \d,t,x}(r)}|\\&\qquad 
\qquad  +|\widehat{f}^{\bar \d}(r, {}^{n}\!\xtx_r,\ynr,{}^{n,m}\! {N}_r^{{\bar \delta}})  -\widehat{f}^{\bar \d}(r, {}^{n}\!\xtx_r,\ynr,{}^{n}\! \bar {N}_r^{{\bar  \delta}})|\Big\rbrace dr \Big].
\end{array}
\end{equation}
First let us deal with the term 
$\E\Big[ |h^{\bar \d}({}^{m}X_T^{t,x}) -h^{\bar \d}({}^{n}X_T^{t,x})|^2\Big]$. We first have, $$|h^{\bar \d}({}^{m}X_T^{t,x}) -h^{\bar \d}({}^{n}X_T^{t,x})|\le \sum_{i\in {\cal I}}|h_{i}({}^{m}X_T^{t,x}) -h_i({}^{n}X_T^{t,x})|. 
$$
\def \fr {\forall}
Next let $R\ge 0$. As $h_i$ is continuous then it is uniformly continuous on $\bar B(0,R)$ and, on the other hand, since it is bounded we have: $\fr \ii$, 
$$
|h_{i}({}^{m}X_T^{t,x}) -h_i({}^{n}X_T^{t,x})|\leq \Psi_R^i\big (
|{}^{m}X_T^{t,x} -{}^{n}X_T^{t,x}| 1_{\{
|{}^{m}X_T^{t,x}|+|{}^{n}X_T^{t,x}|\le R\}}\big)
+2\cih 1_{\{
|{}^{m}X_T^{t,x}|+|{}^{n}X_T^{t,x}|> R\}}
$$where $\Psi_R^i(.)$ is the concave modulus of continuity of $h^i$ on $\bar B(0,R)$. Once more using the boundedness of $h_i(.)$, Jensen's inequality and Markov one to deduce:
\begin{align*}
\E[|h_{i}({}^{m}X_T^{t,x}) -h_i({}^{n}X_T^{t,x})|^2]& \leq 2C_{h_i}\big \{\Psi_R^i\big (
\E[|{}^{m}X_T^{t,x} -{}^{n}X_T^{t,x}| 1_{\{
|{}^{m}X_T^{t,x}|+|{}^{n}X_T^{t,x})|\le R\}}]\big )
\\&\qq\qq +2C_{h_i}\p\{
|{}^{m}X_T^{t,x}|+|{}^{n}X_T^{t,x})|> R\}\\
&\le 
2C_{h_i}\big \{\Psi_R^i\big (
\sqrt{\Lambda_1^{n,m}}\big )+C R^{-2}(1+|x|^{2})\}
\end{align*}
where $C$ is a constant and $\Lambda_1^{n,m}$ is given in \eqref{convergenceXn}.
Now as $(\vec y,z)\mapsto \widehat{f}^{\bar \d}(t,x,\vec y,z)$ is Lipschitz uniformly w.r.t. $(t,x)$ then we have:
$$
|\widehat{f}^{\bar \d}(r, {}^{n}\!\xtx_r,\ynr,{}^{n,m}\! {N}_r^{{\bar \delta}})-\widehat{f}^{\bar \d}(r, {}^{n}\!\xtx_r,\ynr,{}^{n}\! \bar {N}_r^{{\bar  \delta}}) |
\leq  
\underbrace{\sum_{i=1,m}C_{f_i}}_
{\kappa_1}|{}^{n,m}\! {N}_r^{{\bar \delta}}-{}^{n}\! \bar {N}_r^{{\bar  \delta}}|.$$
On the other hand, by \eqref{newhyp1} and since $\ynr$ and $\widehat{f}_i(t,x,0,0)$ are bounded, then:
\begin{equation}
    \begin{aligned}
        |\Sigma^{n,m,\bar \d,t,x}(r)|&\le  \kappa_1|\ymr-\ynr|\\[5pt]
        & \quad \qq +|\widehat{f}^{\bar \d}(r, {}^{m}\!\xtx_r,\ynr,{}^{n,m}\! {N}_r^{{\bar \delta}})-\widehat{f}^{\bar \d}(r, {}^{n}\!\xtx_r,\ynr,{}^{n,m}\! {N}_r^{{\bar  \delta}})|\\[5pt]
        & \le \kappa_1|\ymr-\ynr|+
\sum_{\ell\in {\cal I}}\Phi_R^\ell(|{}^{m}\!\xtx_r- {}^{n}\!\xtx_r|)1_{\{|{}^{m}\!\xtx_r|+
|{}^{n}\!\xtx_r|\le R\}}\\&\qq +|\widehat{f}^{\bar \d}(r, {}^{m}\!\xtx_r,\ynr,{}^{n,m}\! {N}_r^{{\bar \delta}})-\widehat{f}^{\bar \d}(r, {}^{n}\!\xtx_r,\ynr,{}^{n,m}\!  {N}_r^{{\bar  \delta}})|1_{\{|{}^{m}\!\xtx_r|+
|{}^{n}\!\xtx_r|\ge R\}}\\
& \le \kappa_1|\ymr-\ynr|+
\sum_{\ell\in {\cal I}}\Phi_R^\ell\big (|{}^{m}\!\xtx_r- {}^{n}\!\xtx_r|1_{\{|{}^{m}\!\xtx_r|+
|{}^{n}\!\xtx_r|\le R\}}\big )\\&\qq +C(1+|
{}^{n,m}\!  {N}_r^{{\bar  \delta}}|)1_{\{\sup_{r\le T}(|{}^{m}\!\xtx_r|+
|{}^{n}\!\xtx_r|)\ge R\}}\end{aligned}
\end{equation}
where $C$ is a constant independent of $n,m$ which may change from line to line. Therefore we have: $\forall \t\le T$, 
\begin{equation*}
\begin{array}{ll}
&\E\Big[ e^{\alpha \t }|{}^{n,m}\! {P}_\t ^{{\bar \delta}} - {}^{n}\!\bar P_\t ^{{\bar \delta}}|^2 +  \int_\t ^T e^{\alpha r} |{}^{n,m}\! {N}_r^{{\bar \delta}} - {}^{n}\!\bar N_r^{{\bar \delta}}|^2dr +\textstyle{\sum_{\t < r\le T}}\,e^{\alpha r} \Delta_r ({}^{n,m}\! {P}^{{\bar \delta}} - {}^{n}\!\bar P^{{\bar \delta}})^2   \Big] \\\\[5pt]
& \leq  e^{\alpha T}\E\big [
\sum_{i \in {\cal I}}
\cih \big \{\Psi_R^i\big (
\sqrt{\Lambda_1^{n,m}}\big )+C R^{-2}(1+|x|^{2})\}\big]
+(4- \alpha + 4\kp_1^2) \E \Big[  \int_\t ^T  e^{\alpha r} |{}^{n,m}\! {P}_r^{{\bar \delta}} - {}^{n}\!\bar P_r^{{\bar \delta}}|^2 dr \Big]   \\\\[5pt]
&\qquad + \frac{1}{2}    \E \Big[ \int_\t ^T e^{\alpha r} |\ymr-\ynr|^2 dr \Big]+ \frac{1}{2} \E \Big[\int_\t ^T e^{\alpha r}  |{}^{n,m}\! {N}_r^{{\bar \delta}}-{}^{n}\! \bar {N}_r^{{\bar  \delta}}|^2dr  \Big]+\\\\[5pt]
&\qquad +\frac{1}{2}    \E \Big[ \int_\t ^T\big \{
\sum_{\ell\in {\cal I}}\Phi_R^\ell\big (|{}^{m}\!\xtx_r- {}^{n}\!\xtx_r|1_{\{|{}^{m}\!\xtx_r|+
|{}^{n}\!\xtx_r|\le R\}}\big )\}^2dr\Big ]\\\\&\qq +
\frac{1}{2}\underbrace{\textstyle \E\Big[ 1_{\{\sup_{r\le T}(|{}^{m}\!\xtx_r|+
|{}^{n}\!\xtx_r|)\ge R\}}\int_\t ^T(1+|
{}^{n,m}\!  {N}_r^{{\bar  \delta}}|)^2dr\Big ]}_{\Xi(\t)}.
\end{array}
\end{equation*} 
\def \txtl {\textstyle}
Let us now deal with $\Xi(\t)$, the last term of the $rhs$ of the last inequality. By using estimates \eqref{estimXn} and estimate \eqref{estimhdnm}, we deduce that for any $\t\le T$, 
\begin{align*} 
    |\Xi(\t)|\textstyle\le C(1+|x|^2)R^{-2}\big \{\E[\big \{\int_\t ^T(1+|
{}^{n,m}\!  {N}_r^{{\bar  \delta}}|)^2dr\big\}^2\big ]\big \}^{\frac{1}{2}}
\leq C(1+|x|^2)R^{-2}.
\end{align*}Next take {\bc{$\alpha=\alpha_0=4+4\kp_1^2$}} to deduce that for any $\t\le T$, 
\begin{equation*}
\begin{aligned}
&\E\Big[ e^{\alpha_0 \t }|{}^{n,m}\! {P}_\t ^{{\bar \delta}} - {}^{n}\!\bar P_\t ^{{\bar \delta}}|^2\Big ]  \leq \tilde \Xi := e^{\alpha_0 T}
\E\big [
\sum_{i \in {\cal I}}2
\cih\big \{\Psi_R^i(
\sqrt{\Lambda_1^{n,m}})+C R^{-2}(1+|x|^{2})\}\big]\\&\quad \txtl 
+\frac{1}{2}    \E \Big[ \int_\t ^T e^{\alpha_0 r} |\ymr-\ynr|^2 dr \Big]+ \\[5pt]
&\quad \txtl +\frac{1}{2}    \E \Big[ \int_\t ^T\big \{
\sum_{\ell\in {\cal I}}\Phi_R^\ell\big (|{}^{m}\!\xtx_r- {}^{n}\!\xtx_r|1_{\{|{}^{m}\!\xtx_r|+
|{}^{n}\!\xtx_r|\le R\}}\big )\}^2dr\Big ]+ C(1+|x|^{2})R^{-2}.
\end{aligned}
\end{equation*} 
In the same way  as previously we have also for any $\t\le T$,
\begin{equation}\label{eq325}
\begin{aligned}
\E\Big[ e^{\alpha_0 \t }|{}^{n,m}\! {P}_\t ^{{\bar \delta}} - {}^{m}\!\bar P_\t ^{{\bar \delta}}|^2\Big ] \le 
\tilde \Xi 
\end{aligned}
\end{equation} 
Take now $\t=s$ and using the inequality \eqref{comp1} to deduce that:
\begin{equation}\label{equ3.28}
\begin{aligned}
&\E\Big[ e^{\alpha_0 s }|{}^{n}\!Y_s^{i,t,x} - {}^{m}\!Y_s^{i,t,x}|^2\Big ] \\&\leq  2\tilde \Xi := 2e^{\alpha_0 T}
\E\big [
\sum_{i \in {\cal I}}
2\cih\big \{\Psi_R^i\big (
\sqrt{\Lambda_1^{n,m}}\big )+C R^{-2}(1+|x|^{2})\}\big]\txtl
+   \E \Big[ \int_s ^T e^{\alpha_0 r} |\ymr-\ynr|^2 dr \Big]\\&
+mC_R\E \Big[ \int_s ^T
\{\sum_{\ell\in {\cal I}}\Phi_R^\ell(|{}^{m}\!\xtx_r- {}^{n}\!\xtx_r|1_{\{|{}^{m}\!\xtx_r|+
|{}^{n}\!\xtx_r|\le R\}})\}dr\Big ]+ 2C(1+|x|^{2})R^{-2}
\end{aligned}
\end{equation}
where $C_R$ is a constant boundedness of $\max_{\ell=1,...,m}\Phi^\ell_R(.)$ on 
$\bar B(0,R)$. Next by the concavity of $\Phi_R^\ell(.)$, Jensen's inequality applied with $\Phi^\ell_R(.)$, estimate \eqref{convergenceXn} and finally  Gronwall's one we deduce that:
\begin{equation}\label{equ3.283}
\begin{aligned}
&\E\Big[ e^{\alpha_0 s }\sum_{i=1,m}|{}^{n}\!Y_s^{i,t,x} - {}^{m}\!Y_s^{i,t,x}|^2\Big ] \\[5pt]
&\leq  C\Big \{2 e^{\alpha_0 T}
\sum_{i \in {\cal I}}
2 \cih\big \{\Psi_R^i\big (
\sqrt{\Lambda_1^{n,m}}\big )+C R^{-2}(1+|x|^{2})\}
 +mTC_R
\sum_{\ell\in {\cal I}}\Phi_R^\ell(\sqrt{\Lambda_1^{n,m}})+ 2C(1+|x|^{2})R^{-2}\Big \}.
\end{aligned}
\end{equation}
Next in taking $s=t$ we deduce that: 
\begin{equation}\label{equ3.281}
\begin{aligned}
& e^{\alpha_0 t }\sum_{i=1,m}|u^{i,n}(t,x)-u^{i,m}(t,x)|^2\\ &\leq  C\Big \{2 e^{\alpha_0 T}
\sum_{i \in {\cal I}}
2 \cih \big \{\Psi_R^i\big (
\sqrt{\Lambda_1^{n,m}}\big )+C R^{-2}(1+|x|^{2})\}
 +mTC_R
\sum_{\ell\in {\cal I}}\Phi_R^\ell(\sqrt{\Lambda_1^{n,m}})+ 2C(1+|x|^{2})R^{-2}\Big \}.
\end{aligned}
\end{equation}
Thus for any $M\ge1$,
\begin{equation}
\begin{aligned}
&\sup_{(t,x)\in [0,T]\times \bar B(0,M)}\sum_{i=1,m}|u^{i,n}(t,x)-u^{i,m}(t,x)|^2\\ 
&\leq  C\Big \{2 e^{\alpha_0 T}
\sum_{i \in {\cal I}}
2 \cih \big \{\Psi_R^i\big (
\sqrt{\Lambda_1^{n,m}}\big )+C R^{-2}(1+|M|^{2})\}
 +mTC_R
\sum_{\ell\in {\cal I}}\Phi_R^\ell(\sqrt{\Lambda_1^{n,m}})+ 2C(1+|M|^{2})R^{-2}\Big \}.
\end{aligned}\label{equ3.29}
\end{equation}
It implies that:
\begin{equation}
\begin{aligned}
&\limsup_{n,m\rw \infty}\sup_{(t,x)\in [0,T]\times \bar B(0,M)}\sum_{i=1,m}|u^{i,n}(t,x)-u^{i,m}(t,x)|^2\le C R^{-2}(1+|M|^{2}).
\end{aligned}\label{equ3.292}
\end{equation}
As $R$ is arbitrary then for any $M\ge 1$, 
\begin{equation}
\begin{aligned}
&\limsup_{n,m\rw \infty}\sup_{(t,x)\in [0,T]\times \bar B(0,M)}\sum_{i=1,m}|u^{i,n}(t,x)-u^{i,m}(t,x)|^2=0.
\end{aligned}\label{equ3.30}
\end{equation}
Then, in combination with the uniform boundedness of $u^{i,n}$ w.r.t $n$, there exist continuous bounded functions $\check u^i$, $\ii$, such that for $\ii$, $M\ge 1$,
 $$
 \sup_{t\in [0,T], |x|\le M}|u^{i,n}(t,x)- \check u^i(t,x)|\rw 0 \mbox{ as }n\rw \infty.
 $$
This completes the proof. \qed
\bigskip

\begin{proposition} \lb{repvi2}
Assume that assumptions $(i)-(ii)$ of Proposition \ref{firstresult} hold. Then for any $\ii$, the components 
$(V_s^{i,t,x})_{s\in [0,T]}$ of the solution of system \eqref{sys2.14} (which exists) satisfy the following relation:
\begin{align}\label{repV22}
V_s^{i,t,x}(e) = 1_{\{s\ge t\}}( \check u^{i}(s,\xtx_{s-}+& \beta(\xtx_{s-},e))- \check u^{i}(s,\xtx_{s-})),\nonumber\\
& \qquad ds\otimes d\mathbb{P} \otimes d\lambda \mbox{ on } [0,T] \times \Omega \times E.
\end{align}
\end{proposition}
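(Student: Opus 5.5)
We pass to the limit $n\to\infty$ in the truncated system \eqref{BSDEtranqué}, using Propositions \ref{firstresult} and \ref{cvuniloc}. The goal is to show that $({}^{n}\!Y^{i,t,x},{}^{n}\!Z^{i,t,x},{}^{n}\!V^{i,t,x}\un_{\{|e|\ge 1/n\}},{}^{n}\!K^{i,t,x})_{\ii}$ converges to the unique solution $(Y^{i,t,x},Z^{i,t,x},V^{i,t,x},K^{i,t,x})_{\ii}$ of \eqref{sys2.14}. The representation \eqref{repvi} should then pass to the limit, with $u^{i,n}$ replaced by $\check u^i$ and ${}^{n}\!X^{t,x}$ replaced by $\xtx$.

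\textbf{Step 1: convergence of $Y$.} Since $u^{i,n}\to\check u^i$ uniformly on compacts, the $u^{i,n}$ are uniformly bounded, $\check u^i$ is continuous, and ${}^{n}\!X^{t,x}\to \xtx$ in $\ss$ by \eqref{cvgence2}, we get
$$\E\Big[\sup_{s\in[t,T]}|u^{i,n}(s,{}^{n}\!X^{t,x}_s)-\check u^i(s,\xtx_s)|^2\Big]\to 0.$$
The argument splits on the event $\{\sup_s(|{}^{n}\!X_s|+|X_s|)\le R\}$ and uses uniform continuity of $\check u^i$ on compacts plus the uniform convergence; the complement has probability $O(R^{-2})$. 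Thus ${}^{n}\!Y^{i,t,x}\to \check Y^i_s:=\check u^i(s,\xtx_s)$ in $\ss$.

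\textbf{Step 2: convergence of $(Z,V,K)$.} We view all truncated systems in the big filtration $\mathbb{F}$, legitimately, as in \eqref{BSDEtranq21}, since $\tilde\mu_n=\un_{\{|e|\ge 1/n\}}\tilde\mu$ and $\mathbb{F}^{\mu_n}$-martingales are $\mathbb{F}$-martingales. We apply Itô's formula to $|{}^{n}\!Y^i-{}^{m}\!Y^i|^2$. The $dK$ terms are handled by a bound of the form $\E[\int|{}^{n}\!Y-{}^{m}\!Y|\,d({}^{n}\!K+{}^{m}\!K)]\le \|{}^{n}\!Y-{}^{m}\!Y\|_{\ss}\,\|{}^{n}\!K_T+{}^{m}\!K_T\|_{L^2}$; the needed uniform $L^2$ bound on ${}^{n}\!K_T$ follows from the boundedness of $u^{i,n}$ and standard estimates. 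The generator terms are controlled by the Lipschitz property and \eqref{newhyp1}, exactly as in the proof of Proposition \ref{cvuniloc}. This yields that $({}^{n}\!Z^i,{}^{n}\!V^i\un_{\{|e|\ge1/n\}})$ is Cauchy in $\hdd\times\hld$, and hence that ${}^{n}\!K^i$ is Cauchy in $\ss$. Passing to the limit in the equation and in the reflection conditions (Skorokhod condition via uniform convergence of $Y$ and weak convergence of $dK$, using continuity of the limit $K$), the limit solves \eqref{sys2.14}. By uniqueness it equals $(Y,Z,V,K)$; in particular $V^{i,t,x}=\lim_n {}^{n}\!V^{i,t,x}\un_{\{|e|\ge 1/n\}}$ in $\hld$.

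\textbf{Step 3: identification.} By \eqref{repvi}, ${}^{n}\!V^{i}_s(e)\un_{\{|e|\ge1/n\}}=\un_{\{s\ge t\}}\un_{\{|e|\ge1/n\}}\big(u^{i,n}(s,{}^{n}\!X_{s-}+\beta({}^{n}\!X_{s-},e))-u^{i,n}(s,{}^{n}\!X_{s-})\big)$. We must show this converges in $L^2(ds\otimes d\pr\otimes d\lambda)$ to the right side of \eqref{repV22}. This is the main obstacle, since $\lambda$ is infinite and pointwise convergence alone is not enough. I would split $E$ into $\{|e|\ge\varepsilon\}$ and $\{|e|<\varepsilon\}$.
\begin{itemize}
\item On $\{|e|\ge\varepsilon\}$, which has finite $\lambda$-measure, dominated convergence applies: the integrand is bounded and converges in probability, using uniform convergence on compacts, continuity of $\check u^i$, and Lipschitz continuity of $\beta$.
\item On $\{|e|<\varepsilon\}$, I would not bound the explicit formula (mere continuity of $u^{i,n}$ gives no $|e|$-control). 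Instead I would use the BSDE: $\E\int_t^T\int_{\{|e|<\varepsilon\}}|{}^{n}\!V^i_s(e)|^2\lambda(de)ds$ is controlled uniformly in $n$, since $V^{(n)}\to V$ in $\hld$ makes the family uniformly integrable in $e$ near $0$, by the Cauchy property of Step 2.
\end{itemize}
The resulting tightness gives $L^2$ convergence. Then, for a.e.\ $(s,\omega,e)$, the pointwise limit of the explicit formula along a subsequence is $\check u^i(s,X_{s-}+\beta(X_{s-},e))-\check u^i(s,X_{s-})$. Since a subsequence of the $L^2$-convergent family also converges a.e.\ to $V^{i,t,x}$, the two limits coincide and \eqref{repV22} follows.
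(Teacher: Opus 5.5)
Your proposal is correct. Steps 1 and 3 agree in substance with the paper, but in Step 2 you identify the limit by a different route.

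\textbf{Step 2.} You and the paper both obtain the Cauchy property of $({}^{n}\!Z^{i,t,x},{}^{n}\!V^{i,t,x}\un_{\{|e|\ge 1/n\}})$ from It\^o's formula on $|{}^{n}\!Y^{i,t,x}-{}^{m}\!Y^{i,t,x}|^2$. After that, the paper never shows that ${}^{n}\!K^{i,t,x}$ converges. Instead it:
\begin{enumerate}
\item passes to the limit in the Snell envelope characterization \eqref{envsnell};
\item uses the Hamad\`ene--Ouknine result to produce $(\tilde Z^{i,t,x},\tilde V^{i,t,x},\tilde K^{i,t,x})$ for the limiting obstacle problem;
\item runs a second It\^o argument on $|{}^{n}\!Y^{i,t,x}-\check Y^{i,t,x}|^2$ to identify $\tilde Z=\check Z$ and $\tilde V=\check V$, and only then invokes uniqueness.
\end{enumerate}
You instead recover ${}^{n}\!K^{i,t,x}$ from the equation and check the Skorokhod condition directly in the limit. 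This is sound: $\check Y^{i}-\max_{j\neq i}(\check Y^{j}-g_{ij})$ is c\`adl\`ag and the limit $\check K^i$ is continuous, so $\int\phi\,d\,{}^{n}\!K^{i}\to\int\phi\,d\check K^{i}$ along an a.s.\ uniformly convergent subsequence.

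Your route is more self-contained: it needs no limit inside an esssup over stopping times, no external existence theorem, and no second identification step. Its price is proving convergence of the generator integrals $\int_t^{\cdot}\widehat f_i(r,{}^{n}\!X^{t,x}_r,\ldots)\,dr$. For that, \eqref{newhyp1} together with a.s.\ uniform convergence of ${}^{n}\!X^{t,x}$ along a subsequence suffices. Uniform convergence in probability of ${}^{n}\!K^{i}$ is all you need; you do not have to show it is Cauchy in $\mathcal{S}^2$.

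\textbf{Step 3.} Your remark that pointwise convergence ``is not enough'' is off the mark. By Step 2, ${}^{n}\!V^{i,t,x}\un_{\{|e|\ge 1/n\}}\to V^{i,t,x}$ in $\hld$, and by \eqref{repvi} this sequence coincides a.e.\ with the explicit formula. Take a further subsequence on which $\sup_s|{}^{n}\!X^{t,x}_s-X^{t,x}_s|\to 0$ a.s.; a.e.\ convergence of the formula along it already identifies $V^{i,t,x}$. That is exactly the paper's argument, and it is your final two sentences. The $\varepsilon$-splitting and uniform-integrability discussion is correct but superfluous.
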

\proof It will be obtained after three steps. 

\nd\underline{Step 1}: For any $\ii$, the sequence of processes $(({}^{n}\!Y_s^{i,t,x})_{s\in [t,T]})_{n\geq 1}$ converges to $({\check u}^i(s,\xtx_s))_{s\in [t,T]}$ in $\mathcal{S}^2_{[t,T]}$ (which is 
$\mathcal{S}^2$ reduced to ${[t,T]}$).
\ms

Let $\ii$ be fixed. From Proposition \ref{cvuniloc}, we know that $(u^{i,n})_{n\ge 0}$ converges locally uniformly to $\check u^i$. Then for any $n\geq 1$, we have:
\begin{equation}\label{majuin}
 \begin{aligned}
 & \E\big[\sup_{t\leq s \leq T} \big| u^{i,n}(s,{}^{n}\!\xtx_s) -\check u^{i}(s,\xtx_s)\big|^2 \big]\\ & \leq  2\E\big[\sup_{t\leq s \leq T}  \big|u^{i,n}(s,{}^{n}\!\xtx_s) -\check u^{i}(s,{}^{n}\!\xtx_s)\big|^2 \big]+ 2 \E\big[\sup_{t\leq s \leq T}  \big|\check u^{i}(s,{}^{n}\!\xtx_s) - \check u^{i}(s,\xtx_s)\big|^2 \big].
 \end{aligned}
 \end{equation}
 Let us deal with the first term of the r.h.s of the inequality (\ref{majuin}). Let $\rho >0$ and $x\in \bar B(0,\rho)$, then we have
\begin{align}\label{uinui}
&\E\big[\sup_{t\leq s \leq T}  \big|u^{i,n}(s,{}^{n}\!\xtx_s)- \check{u}^i(s,{}^{n}\!\xtx_s)\big|^2 \big]\nonumber\\[3pt]
&\leq C \Big\{ \E\big[\mathbf{1}_{\lbrace\displaystyle \sup_{s\le T}|{}^{n}\!\xtx_s| > \rho \rbrace}\sup_{t\leq s \leq T} \big|u^{i,n}(s,{}^{n}\!\xtx_s)- \check{u}^i(s,{}^{n}\!\xtx_s)\big| \big]\nonumber \\[3pt] 
& \qquad + \E\big[\mathbf{1}_{\{\displaystyle \sup_{s\le T}
|{}^{n}\!\xtx_s| \le \rho \rbrace}\sup_{t\leq s \leq T} \big|u^{i,n}(s,{}^{n}\!\xtx_s) - \check{u}^i(s,{}^{n}\!\xtx_s)\big| \big]\Big \} ,      
\end{align}
\def \mx{\mbox}
where $C$ is the sum of the boundedness constants of $u^{i,n}$ and $\check{u}^i$. Next, Markov's inequality implies that the first term is dominated by $C_x \rho^{-1}$, where $C_x$ is an appropriate constant which may depend on $x$, since $\check u^i$ and ${}^{n}\!{u}^i$ are bounded and $\E[\sup_{s\le T}
     |{}^{n}\!\xtx_s|]\leq C(1+|x|).$ On the other hand, the second one verifies
\begin{eqnarray*}
 &   \E\big[\mathbf{1}_{ \{\sup_{s\le T}
|{}^{n}\!\xtx_s| \le \rho \rbrace}\sup_{t\leq s \leq T} \big|u^{i,n}(s,{}^{n}\!\xtx_s) - \check{u}^i(s,{}^{n}\!\xtx_s)\big| \big]\\
&\qq \qq \qq \qq \leq \sup_{(t,x)\in [0,T]\times \bar B(0,\rho)}|u^{i,n}(t,x)-\check{u}^i(t,x)|\rw 0 \mx{ as }n\rw \infty.
\end{eqnarray*}     
which goes to zero when $n$ goes to infinity as $u^{i,n}$ converges locally uniformly to $\check{u}^i$ when $n$ goes to infinity .\\
We now focus on the second term of the $r.h.s$ of inequality (\ref{majuin}). Let us denote by $\omega^i_\rho(.)$ the concave modulus of continuity of $\check{u}^i$ on $ [0,T] \times \bar B(0,\rho)$. Then by the boundedness of $\check{u}^i$, we have:
     \begin{align}\label{supcheckui}
         &\E\big[\sup_{t\leq s \leq T}  \big|\check{u}^i(s,{}^{n}\!\xtx_s) - \check{u}^i(s,\xtx_s)\big|^2 \big]\nonumber\\[3pt]
         &
         \leq C \Big\{ \E\big[\mathbf{1}_{\lbrace\displaystyle \sup_{s\le T}|{}^{n}\!\xtx_s|+
     |\xtx_s| \le \rho \rbrace}\sup_{t\leq s \leq T} \big|\check{u}^i(s,{}^{n}\!\xtx_s) - \check{u}^i(s,\xtx_s)\big| \big]\nonumber \\[3pt] 
         & \qquad + \E\big[\mathbf{1}_{\displaystyle\{ \sup_{s\le T}|{}^{n}\!\xtx_s|+
     |\xtx_s| > \rho \rbrace}\sup_{t\leq s \leq T}  \big|\check{u}^i(s,{}^{n}\!\xtx_s) - \check{u}^i(s,\xtx_s)\big|\big]\Big \}\nonumber \\[3pt]
         &
         \leq C \Big\{\E\big[\sup_{t\leq s \leq T} ( \big|\check{u}^i(s,{}^{n}\!\xtx_s) - \check{u}^i(s,\xtx_s)\big |)\mathbf{1}_{\lbrace\displaystyle \sup_{s\le T}\{|{}^{n}\!\xtx_s|+
     |\xtx_s|\} > \rho \rbrace} \big\rbrace\big]\nonumber \\[3pt] 
         & \qquad + \E\big[\sup_{t\leq s \leq T}  \omega^i_\rho(|{}^{n}\!\xtx_s - \xtx_s|\mathbf{1}_{\lbrace\displaystyle \sup_{s\le T}\{|{}^{n}\!\xtx_s|+
     |\xtx_s|\} \le \rho \rbrace})\big]\Big \}.\nonumber \\[3pt]
     \end{align}
     As previously, Markov's inequality implies that the first term is dominated by 
     $C_x \rho^{-1}$ since $\check u^i$ is bounded and $\E[\sup_{s\le T}\{|{}^{n}\!\xtx_s|+
     |\xtx_s|\}]\leq C(1+|x|).$ On the other hand, the second one verifies:
     \begin{align}
      \nb  & \E\big[\sup_{t\leq s \leq T} \omega^i_\rho(|{}^{n}\!\xtx_s - \xtx_s|\mathbf{1}_{\lbrace\displaystyle \sup_{s\le T}\{|{}^{n}\!\xtx_s|+
     |\xtx_s|\} \le \rho \rbrace})\big] \\\nb &\leq
      \E\big[  \omega^i_\rho(\sup_{t\leq s \leq T}|{}^{n}\!\xtx_s - \xtx_s|\mathbf{1}_{\lbrace\displaystyle \sup_{s\le T}\{|{}^{n}\!\xtx_s|+
     |\xtx_s|\} \le \rho \rbrace})\big]\\\nb &\nb \le 
      \omega^i_\rho\Big (\E\big[\sup_{t\leq s \leq T}|{}^{n}\!\xtx_s - \xtx_s|\times \mathbf{1}_{\lbrace\displaystyle \sup_{s\le T}\{|{}^{n}\!\xtx_s|+
     |\xtx_s|\} \le \rho \rbrace}\big]\Big ).
 \end{align}
     The last inequality stems from concavity of $\omega^i_\rho(.)$ and Jensen's inequality. Finally using \eqref{cvgence2} we deduce that 
     $$\lim_{n\rw \infty}\E\big[\sup_{t\leq s \leq T} \omega^i_\rho(|{}^{n}\!\xtx_s - \xtx_s|\mathbf{1}_{\lbrace\displaystyle \sup_{s\le T}\{|{}^{n}\!\xtx_s|+
     |\xtx_s|\} \le \rho \rbrace})\big]=0.$$
     Now as $\rho$ is arbitrary then going back to \eqref{supcheckui}, take the superior limit w.r.t $n$ then the limit w.r.t $\rho$ to deduce that 
     $$
     \limsup_{n\rw \infty} \E\big[\sup_{t\leq s \leq T}  \big|\check{u}^i(s,{}^{n}\!\xtx_s) - \check{u}^i(s,\xtx_s)\big|^2 \big]=0.
     $$
which means that for any $\ii$, the sequence $({}^{n}\!Y^{i,t,x})_{n\ge 0}$ is convergent in $\ss_{[t,T]}$ and its limit is $(\check Y^{i,t,x}_s=\check{u}^i(s,\xtx_s))_{\stt}$. 
\ms

\nd \underline{Step 2}: There exists a constant $\bar C\ge 0$ such that for $n\ge 1$,  \begin{equation}\label{estimZK1}
\begin{aligned}
 \sum_{i=1,m}\E \big[\int_t^T|{}^{n}\!Z_r^{i,t,x}|^2dr + \big({}^{n}\!K_T^{i,t,x}-{}^{n}\!K_t^{i,t,x}\big)^2  +\int_t^T\int_E |{}^{n}\!V_r^{i,t,x}(e)|^2\lambda_n(de)dr\big] \leq \bar{C}.
\end{aligned}
\end{equation}Applying It\^o's formula with $|{}^{n}\!Y_s^{i,t,x}|^2$, we obtain:  $\forall \ii$ and $s \in [t,T]$, 
 \begin{equation*}
\begin{aligned}
&\E \big[|{}^{n}\!Y_s^{i,t,x}|^2 \big] + \E \big[\int_s^T|{}^{n}\!Z_r^{i,t,x}|^2dr \big] + \E \big[\int_s^T\int_E|{}^{n}\!V_r^{i,t,x}(e)|^2 \lambda_n(de)dr \big] \\[3pt] 
& \qquad = \E \big[|h_i({}^{n}\!X_T^{t,x})|^2 \big] + 2\E \big[ \int_s^T {}^{n}\!Y_r^{i,t,x} \widehat{f}_i(r,{}^{n}\!X_r^{t,x},({}^{n}\!Y_r^{k,t,x})_{k\in \mi}, {}^{n}\!Z_r^{i,t,x})dr\big]\\[3pt]
&\qquad \quad+ 2 \E \big[ \int_s^T {}^{n}\!Y_r^{i,t,x} d{}^{n}\!K_r^{i,t,x}\big].
\end{aligned}
\end{equation*}
Then by a linearization procedure of $\widehat{f}_i$,  which is possible since it is Lipschitz $\wt$ $(\vec y,z)$ and using the inequality $2ab\leq \epsilon a^2 + \frac{1}{\epsilon}b^2$, for any $\epsilon > 0$ and $a,b \in \R$, we have: $\fstt$
 \begin{equation*}
\begin{aligned}
 &\E \big[\int_s^T|{}^{n}\!Z_r^{i,t,x}|^2dr \big]  + \E \big[\int_s^T\int_E|{}^{n}\!V_r^{i,t,x}(e)|^2 \lambda_n(de)dr \big]\\[3pt]
 & \qquad\leq  \E \big[|h_i({}^{n}\!X_T^{t,x})|^2 \big]   + 2\E \big[ \int_s^T |{}^{n}\!Y_r^{i,t,x}| \times \Big\lbrace |\widehat{f}_i(r,{}^{n}\!X_r^{t,x},0,0)| + \sum_{l=1,m}|a_r^{i,l,n}||{}^{n}\!Y_r^{l,t,x}|\\[4pt]
& \qquad +  |b_r^{i,n}||{}^{n}\!Z_r^{i,t,x}|\Big\rbrace dr\big]+ \frac{1}{\epsilon}\E \big[ \sup_{t\leq s\leq T} |{}^{n}\!Y_s^{i,t,x}|^2 \big] + \epsilon \E \big[ \big({}^{n}\!K_T^{i,t,x}-{}^{n}\!K_s^{i,t,x}\big)^2\big],
\end{aligned}
\end{equation*}
where $a^{i,l,n}\in \R$ and $b^{i,n}\in \R^d$  are $\cal P$-measurable bounded processes. Using again the inequality $2ab\leq \nu a^2 + \frac{1}{\nu}b^2$, for $\nu > 0$, we get: 
\begin{equation*}
\begin{aligned}
 &\E \big[\int_s^T|{}^{n}\!Z_r^{i,t,x}|^2dr \big]   + \E \big[\int_s^T\int_E|{}^{n}\!V_r^{i,t,x}(e)|^2 \lambda_n(de)dr \big]\\[3pt]
 &\qquad \leq \E \big[|h_i({}^{n}\!X_T^{t,x})|^2 \big]  + \frac{1}{\nu}\E \big[ \int_s^T |{}^{n}\!Y_r^{i,t,x}|^2dr \big]+ \nu  \E \big[ \int_s^T\big\{ |\widehat{f}_i(r,{}^{n}\!X_r^{t,x},0,0,0)|\\[3pt]
 &\qquad \quad +\sum_{l=1,m}a_r^{i,l,n}|{}^{n}\!Y_r^{l,t,x}| +  b_r^{i,n}|{}^{n}\!Z_r^{i,t,x}| \big\}^2dr\big]\\[3pt]
 &\qquad \quad+ \frac{1}{\epsilon}\E \big[ \sup_{t\leq s\leq T} |{}^{n}\!Y_s^{i,t,x}|^2 \big] + \epsilon \E \big[ \big({}^{n}\!K_T^{i,t,x} - {}^{n}\!K_s^{i,t,x}\big)^2\big].
\end{aligned}
\end{equation*}
From the boundedness of $\widehat{f}_i(t,x,0,0)$ and $h_i(x)$, the inequality $|a + b + c|^2 \leq 3 \{|a|^2 + |b|^2 + |c|^2\}$ $\forall a, b, c\in \R$ and finally the Cauchy-Schwarz one, we obtain:
\begin{equation*}
\begin{aligned}
 &\E \big[\int_s^T|{}^{n}\!Z_r^{i,n}|^2dr \big]  + \E \big[\int_s^T\int_E|{}^{n}\!V_r^{i,t,x}(e)|^2 \lambda_n(de)dr \big]\\[4pt]
 &\qquad \leq  \bar C^2+3\nu \bar C^2 (T-s)  + \frac{1}{\nu}\E \big[ \int_s^T |{}^{n}\!Y_r^{i,t,x}|^2dr \big]  +  3\nu C\E\big[\int_s^T\sum_{l=1,m}|{}^{n}\!Y_r^{l,t,x}|^2 dr \big]\\[4pt]
 & \qquad \quad+3\nu C\E\big[\int_s^T |{}^{n}\!Z_r^{i,t,x}|^2dr \big] + \frac{1}{\epsilon}\E \big[ \sup_{t\leq s\leq T} |{}^{n}\!Y_s^{i,t,x}|^2 \big] + \epsilon \E \big[ \big({}^{n}\!K_T^{i,t,x} -{}^{n}\!K_s^{i,t,x}\big)^2\big], 
\end{aligned}
\end{equation*}
  for a suitable positive constant $C$. Choose now $\nu$ such that $3\nu C < 1$ and taking the summation over all $i \in \mi$, we obtain:
 \begin{equation*}
\begin{aligned}
 &\sum_{i=1,m}\Big(\E \big[\int_s^T|{}^{n}\!Z_r^{i,t,x}|^2dr \big] +\E \big[\int_s^T\int_E|{}^{n}\!V_r^{i,t,x}(e)|^2 \lambda_n(de)dr \big]\Big)\\[4pt]
 & \qquad \leq C\Big( 1  + \sum_{i=1,m}\E \big[\sup_{t\leq s \leq T}|{}^{n}\!Y_s^{i,t,x}|^2 \big] \Big)+\epsilon \sum_{i=1,m}\E \big[ \big({}^{n}\!K_T^{i,t,x}-{}^{n}\!K_s^{i,t,x}\big)^2\big], 
\end{aligned}
\end{equation*} 
where $C = C(T,m,\nu,\epsilon) >0$ is an appropriate constant independent of $n$. But 
$Y^{i,t,x}$ is uniformly bounded, then get: 
\begin{equation}\label{estimZ1}
\begin{aligned}
 &\sum_{i=1,m}\Big(\E \big[\int_s^T|{}^{n}\!Z_r^{i,t,x}|^2dr \big]  + \E \big[\int_s^T\int_E|{}^{n}\!V_r^{i,t,x}(e)|^2 \lambda_n(de)dr \big]\Big)\nonumber\\
 & \qquad \leq C+ \epsilon \sum_{i=1,m}\E \big[ \big({}^{n}\!K_T^{i,n}-{}^{n}\!K_s^{i,t,x}\big)^2\big].
\end{aligned}
\end{equation} 
Now, from the equality:
\begin{equation}
\begin{aligned}
{}^{n}\!K_T^{i,t,x}-{}^{n}\!K_s^{i,t,x} = &\, {}^{n}\!Y_s^{i,t,x} - h_i({}^{n}\!X_T^{t,x}) - \int_s^T \widehat{f}_i(r,{}^{n}\!X_r^{t,x},({}^{n}\!Y_r^{k,t,x})_{k\in \mi}, {}^{n}\!Z_r^{i,t,x})dr \\[3pt]
& \qquad + \int_s^T {}^{n}\!Z_r^{i,t,x} dB_r + \int_s^T \int_E {}^{n}\!V_r^{i,t,x}(e)\tilde{\mu}_n(dr,de), \,s\le T, 
\end{aligned}
\end{equation}
and, once again, by a linearization procedure of the Lipschitz function $\widehat{f}_i$, the boundedness of $\widehat{f}_i(t,x,\vec 0,0)$, $h_i(x)$ and $u^{i,n}(t,x)$ (and then of $({}^{n}\!Y_r^{i,t,x})_{r\in [t,T]}$ by \eqref{repynuin}) and finally the use of the Burkholder-Davis-Gundy inequality, there exists a positive constant $C^{\prime}$ such that: $\forall s\in [t,T]$, 
 \begin{equation*}
\begin{aligned}
 &\sum_{i=1,m}\E \big[\big({}^{n}\!K_T^{i,t,x}-{}^{n}\!K_s^{i,t,x}\big)^2 \big]\\[3pt]
 &  \quad \leq C^{\prime} \Big( 1 + \sum_{i=1,m}\E \big[\sup_{t\leq s \leq T}|{}^{n}\!Y_s^{i,t,x}|^2 \big]+  \sum_{i=1,m}\E \big[\int_s^T|{}^{n}\!Z_r^{i,t,x}|^2dr \big]\\[4pt]
 &\qquad \qquad\qquad + \sum_{i=1,m}\E \big[\int_s^T\int_E |{}^{n}\!V_r^{i,t,x}(e)|^2\lambda_n(de)dr\big]\Big)\\[4pt]
 &\quad \leq C^{\prime} \Big( 1+\sum_{i=1,m}\E \big[\int_s^T|{}^{n}\!Z_r^{i,t,x}|^2dr \big]+ \sum_{i=1,m}\E \big[\int_s^T\int_E |{}^{n}\!V_r^{i,t,x}(e)|^2\lambda_n(de)dr\big]\Big).
\end{aligned}
\end{equation*} 
Combining this last estimate with \eqref{estimZ1} and choosing $\epsilon$ small enough,  since it is arbitrary, there exists a constant $\bar{C}$ independent of $n$ such that: $\forall s\in [t,T]$,  
 \begin{equation*}
\begin{aligned}
 \sum_{i=1,m}\E \big[\int_s^T|{}^{n}\!Z_r^{i,t,x}|^2dr + \big({}^{n}\!K_T^{i,t,x}-{}^{n}\!K_s^{i,t,x}\big)^2  +\int_s^T\int_E |{}^{n}\!V_r^{i,t,x}(e)|^2\lambda_n(de)dr\big] \leq \bar{C}
\end{aligned}
\end{equation*}
which implies \eqref{estimZK1} in taking $s=t$. 
\ms

\nd \underline{Step 3}: For any $\ii$, the sequences 
of processes $({}^{n}\!Z^{i,t,x})_{n\ge 0}$ and $({}^{n}\!V^{i,t,x}(e)1_{\{|e|\ge \frac{1}{n}\}})_{n\ge 1}$ are Cauchy in $\hdd$ and $\hld$ respectively. 
\medskip

Actually for any $n, m \geq 1$, by It\^o's formula we have: $\fstt$, 
\begin{equation*}
\begin{aligned}
 &\E \big[\int_s^T|{}^{n}\!Z_r^{i,t,x} -{}^{m}\!Z_r^{i,t,x} |^2dr \big]\nonumber\\[3pt]
 &\qquad + \E \big[\int_s^T \int_E|{}^{n}\!V_r^{i,t,x}(e)\mathbf{1}_{\lbrace|e| \geq \frac{1}{n}\rbrace} -{}^{m}\!V_r^{i,t,x}(e)\mathbf{1}_{\lbrace|e| \geq \frac{1}{m}\rbrace} |^2 \lambda(de)dr \big]\\[3pt]
 &\quad \leq  \E\Big[  |h_i({}^{n}\!X_T^{t,x})- h_i({}^{m}\!X_T^{t,x})|^2\Big] + 2\E \big[ \int_s^T \big({}^{n}\!Y_r^{i,t,x} -{}^{m}\!Y_r^{i,t,x}\big)\times\nonumber\\[3pt]
 & \qquad \Big\{\widehat{f}_i(r,{}^{n}\!X_r^{t,x},({}^{n}\!Y_r^{k,t,x})_{k\in \mi}, {}^{n}\!Z_r^{i,t,x})- \widehat{f}_i(r,{}^{m}\!X_r^{t,x},({}^{m}\!Y_r^{k,t,x})_{k\in \mi}, {}^{m}\!Z_r^{i,t,x})\Big\}dr\big]\nonumber\\[3pt]
&\qquad+ 2 \E \big[ \int_s^T \big({}^{n}\!Y_r^{i,t,x} -{}^{m}\!Y_r^{i,t,x}\big)  \big(d{}^{n}\!K_r^{i,t,x} -d{}^{m}\!K_r^{i,t,x}\big) \big].
\end{aligned}
\end{equation*}
The Cauchy-Schwarz inequality and the inequality $2ab\leq \frac{1}{\eta} a^2 + \eta b^2$ for $\eta > 0$, yield:
\begin{align}\label{convZ}
 &\E \big[\int_s^T|{}^{n}\!Z_r^{i,t,x} -{}^{m}\!Z_r^{i,t,x}|^2dr \big]\nonumber\\[3pt]
 &\qquad + \E \big[\int_s^T \int_E|{}^{n}\!V_r^{i,t,x}(e)\mathbf{1}_{\lbrace|e| \geq \frac{1}{n}\rbrace} -{}^{m}\!V_r^{i,t,x}(e)\mathbf{1}_{\lbrace|e| \geq \frac{1}{m}\rbrace} |^2 \lambda(de)dr \big]\nonumber\\[3pt]
&\quad \leq \E\Big[  |h_i({}^{n}\!X_T^{t,x})- h_i({}^{m}\!X_T^{t,x})|^2\Big]+2 \sqrt{\E\Big[\displaystyle \sup_{t\leq s\leq T}|{}^{n}\! Y_s^{i,t,x}- {}^{m}\!Y_s^{i,t,x}|^2\Big]}\times\nonumber\\[3pt]
    &\qquad \Big \{\E\Big[\int_s^T|\widehat{f}_i(r,{}^{n}\!X_r^{t,x},({}^{n}\!Y_r^{k,t,x})_{k\in \mi},{}^{n}\!Z_r^{i,t,x}) - \widehat{f}_i(r,{}^{m}\!X_r^{t,x},({}^{m}\!Y_r^{k,t,x})_{k\in \mi},{}^{m}\!Z_r^{i,t,x})|^2 dr\Big]\Big \}^{\frac{1}{2}}\nonumber\\[3pt]
    & \qquad + \frac{1}{\eta}\E\Big[\displaystyle \sup_{t\leq s\leq T}|{}^{n}\! Y_s^{i,t,x}- {}^{m}\!Y_s^{i,t,x}|^2\Big] + \eta \E \Big[ ({}^{n}\!K_T^{i,t,x}+{}^{m}\!K_T^{i,t,x})^2\Big].
\end{align}
But,  {\bc  as ${}^{n}\!Y^{k,t,x}$ and ${}^{m}\!Y^{k,t,x}$ are bounded, $\hat f_i(r,x,0,0)$ is bounded,  $\hat f_i(r,x,y,z)$ is Lipschitz w.r.t $(y,z)$ uniformly in $(t,x)$ and using inequality \eqref{estimZK1}} and a linearization procedure of $\hat f_i(.)$ one deduces the existence of a constant $C \geq 0$ (independent of $n$),  such that, for all $n\geq 1$,
\begin{equation}
\begin{aligned}
 \E \big[ \int_s^T \big|\hat f_i(r,{}^{n}\!X_r^{t,x},({}^{n}\!Y_r^{k,t,x})_{k\in \mi}, {}^{n}\!Z_r^{i,t,x})\big|^2dr\big] \leq C.
\end{aligned}
\end{equation} 
Besides, as $h_i$ is continuous and bounded then
\begin{align*}
 \E\big[|h_i({}^{n}\!X_T^{t,x})-h_i({}^{m}\!X_T^{t,x})|\big]\rw 0 \mbox{ as $n,m\rw +\infty$}.
\end{align*}
Then taking the limit w.r.t $n,m$ in \eqref{convZ} and taking into account of \eqref{estimZK1} and the convergence of  ${}^{n}\!Y^{i,t,x}$ in $\mathcal{S}^2_{[t,T]}$, we deduce that:
\begin{align}\label{3.38}
 \limsup_{n,m\rightarrow \infty}\Big\{&\E \big[\int_t^T|{}^{n}\!Z_r^{i,t,x} -{}^{m}\!Z_r^{i,t,x} |^2dr \big]\nonumber\\[3pt]
 &+ \E \big[\int_t^T \int_E|{}^{n}\!V_r^{i,t,x}(e)\mathbf{1}_{\lbrace|e| \geq \frac{1}{n}\rbrace} -{}^{m}\!V_r^{i,t,x}(e)\mathbf{1}_{\lbrace|e| \geq \frac{1}{m}\rbrace} |^2 \lambda(de)dr \big]\Big\}\leq \eta\, C.
\end{align}
 As $\eta$ is arbitrary then $({}^{n}\!Z^{i,t,x})_{n\ge 1}$ and  $({}^{n}\!V^{i,t,x}\mathbf{1}_{\lbrace|e| \geq \frac{1}{n}\rbrace})_{n\ge 1}$  are Cauchy sequences in the complete spaces $\hdd$ and $\hld$ respectively. Then there exist processes $\check Z^{i,t,x}$ and $\check V^{i,t,x}$, respectively $\mathcal{P}$-measurable and $\mathbf{P}$-measurable such that the sequences   $({}^{n}\!Z^{i,t,x})_{n\ge 1}$ and $({}^{n}\!V^{i,t,x}\mathbf{1}_{\lbrace|e| \geq \frac{1}{n}\rbrace})_{n\ge 1}$  converge respectively toward  $\check Z^{i,t,x}$ and $\check V^{i,t,x}$ in $\hdd$ and $\hld$.
Now, going back to \eqref{envsnell} and taking the limit w.r.t. $n$, to obtain: for any $\ii$ and  $s \in [t,T]$,
\begin{align*}
    \check Y_s^{i,t,x}= \displaystyle\mbox{esssup}_{\tau \geq s}\E\Big[\int_s^{\tau}&\widehat{f}_i(r,X_r^{t,x},(\check Y_r^{k,t,x})_{k\in \mi},\check Z_r^{i,t,x})dr +  h_i(X_{T}^{t,x})\mathbf{1}_{\lbrace \tau=T\rbrace}\nonumber\\
     &+\max_{j \in \mathcal{I}^{-i}}(Y_{\tau}^{j,t,x} -g_{ij}(\tau))\mathbf{1}_{\lbrace \tau<T\rbrace}\big|\mathcal{F}_s\Big].
\end{align*}
Therefore by Hamadene-Ouknine's result \cite{ham-ouk}, there exist processes $(\tilde Z^{i,t,x}, \tilde V^{i,t,x},\tilde K^{i,t,x})_{\ii}\in \hdd\times \hld \times \mathcal{A}^2$ such that: for any $\ii$ and $\stt$,
\begin{align}
    \begin{cases}
    \check Y_s^{i,t,x} = h_i(X_T^{t,x})+ \int_s^T  \widehat{f}_i(r,X_r^{t,x},(\check Y_r^{k,t,x})_{k\in \mi},\check Z_r^{i,t,x})dr  +\tilde K_T^{i,t,x} - \tilde K_s^{i,t,x} \\[5pt]
 \qquad \qquad   -\int_s^T\tilde  Z_r^{i,t,x}dB_r -\int_s^T \int_E \tilde V_r^{i,t,x}(e) \tilde{\mu}(dr,de);\\[5pt]
 \check Y_s^{i,t,x} \geqslant  \displaystyle \max_{j \in \mathcal{I}^{-i}}(\check  Y_s^{j,t,x} -g_{ij}(s));\\[5pt]
 \int_t^T (\check Y_s^{i,t,x} -\displaystyle \max_{j\in \mathcal{I}^{-i}}(\check Y_s^{j,t,x} -g_{ij}(s))) d\tilde K_s^{i,t,x} = 0.
\end{cases}
\end{align}\label{3.39}
Now, applying It\^o's formula with $|{}^{n}\!Y_s^{i,t,x}- \check{Y}_s^{i,t,x}|^2$, yields: $\forall s\in [t,T]$,
\begin{align*}
   &\E\big[|{}^{n}\!Y_s^{i,t,x}- \check{Y}_s^{i,t,x}|^2 + \textstyle \int_s^T |{}^{n}\!Z_r^{i,t,x}- \tilde{Z}_r^{i,t,x}|^2dr \\[7pt]
   &\quad  +\textstyle \int_s^T \int_E |{}^{n}\!V_r^{i,t,x}\mathbf{1}_{\lbrace|e| \geq \frac{1}{n}\rbrace}- \tilde{V}_r^{i,t,x}|^2 \lambda(de)dr\big]\\[7pt]
   & \leq  \E\big[  |h_i({}^{n}\!X_T^{t,x})- h_i(X_T^{t,x})|^2\big] +2 \E\big[\textstyle \int_s^T ({}^{n}\! Y_r^{i,t,x}- \check{Y}_r^{i,t,x})\times\\[7pt]
   & \qquad \quad  \{\widehat{f}_i(r,{}^{n}\!X_r^{t,x},({}^{n}\!Y_r^{k,t,x})_{k\in \mi},{}^{n}\!Z_r^{i,t,x}) -\widehat{f}_i(r,X_r^{t,x},(\check{Y}_r^{k,t,x})_{k\in \mi},\check Z_r^{i,t,x})\}dr\big]\\[7pt]
   & \quad+ 2\E\big[ \textstyle \int_s^T ({}^{n}\! Y_r^{i,t,x}- \check Y_r^{i,t,x})d({}^{n}\! K_r^{i,t,x}- \tilde K_r^{i,t,x})\big].
\end{align*}
Next, the same procedure as the one which leads to  \eqref{3.38} can be used here to deduce that: \begin{align*}
    \E \big[ \textstyle \int_t^T |{}^{n}\!Z_r^{i,t,x}- \tilde{Z}_r^{i,t,x}|^2dr + \int_t^T \int_E |{}^{n}\!V_r^{i,t,x}\mathbf{1}_{\lbrace|e| \geq \frac{1}{n}\rbrace}- \tilde{V}_r^{i,t,x}|^2 \lambda(de)dr\big] \rightarrow 0 \mbox{ as } n\rightarrow \infty.
\end{align*}
This implies that the sequences $({}^{n}\!Z^{i,t,x})_{n\ge 1}$ and $({}^{n}\!V^{i,t,x}\mathbf{1}_{\lbrace|e| \geq \frac{1}{n}\rbrace})_{n\ge 1}$  converge toward  $\tilde Z^{i,t,x}$ and $\tilde V^{i,t,x}$ in $\hdd$ and $\hld$ respectively. Therefore, by uniqueness of limits  we deduce that for any $\ii$,
$$\tilde Z^{i,t,x}= \check Z^{i,t,x} \mbox{ and } \tilde V^{i,t,x}= \check V^{i,t,x}$$ in their respective spaces. 
Then $(\check Y^{i,t,x},\check Z^{i,t,x},\check V^{i,t,x},\tilde K^{i,t,x})_{\ii}$ verify: for any $s\in [t,T]$ and $\ii$,
\begin{align*}
    \begin{cases}
    \check Y_s^{i,t,x} = h_i(X_T^{t,x})+ \int_s^T  \widehat{f}_i(r,X_r^{t,x},(\check Y_r^{k,t,x})_{k\in \mi},\check Z_r^{i,t,x})dr  +\tilde K_T^{i,t,x} - \tilde K_s^{i,t,x} \\[5pt]
 \qquad \qquad   -\int_s^T\check  Z_r^{i,t,x}dB_r -\int_s^T \int_E \check V_r^{i,t,x}(e) \tilde{\mu}(dr,de);\\[5pt]
 \check Y_s^{i,t,x} \geqslant  \displaystyle \max_{j \in \mathcal{I}^{-i}}(\check  Y_s^{j,t,x} -g_{ij}(s));\\[5pt]
 \int_t^T (\check Y_s^{i,t,x} -\displaystyle \max_{j\in \mathcal{I}^{-i}}(\check Y_s^{j,t,x} -g_{ij}(s))) d\tilde K_s^{i,t,x} = 0.
\end{cases}
\end{align*}
It means that the quadruples of processes 
$((\check Y^{i,t,x},\check  Z^{i,t,x}, \check V^{i,t,x},\tilde K^{i,t,x}))_{\ii}$ is a solution of system \eqref{sys2.14} in $[t,T]$. But the solution of this latter is unique, therefore 
we have: 
$$(\check Y^{i,t,x}, \check Z^{i,t,x}, \check V^{i,t,x},\tilde K^{i,t,x})_{\ii}=( Y^{i,t,x},  Z^{i,t,x}, V^{i,t,x}, K^{i,t,x})_{\ii}.
$$
Now, let us consider a subsequence which be denoted by $\lbrace n_k \rbrace$ such that  
$$|{}^{n_{k}}\! V_s^{i,t,x}\mathbf{1}_{\lbrace|e| \geq \frac{1}{n_k}\rbrace}- V_s^{i,t,x}|^2 \longrightarrow_{n_k} 0,\, ds\otimes d\mathbb{P} \otimes d\lambda -a.e. \mbox{ on }[t,T]\times E\times \Omega .$$
Besides, the process ${}^{n}\! V^{i,t,x}$ has a representation in terms of $u^{i,n}$ (see  \eqref{repvi}) and 
$(u^{i,n})_{n\ge 0}$ converges uniformly to $u^i$. Then,  if $(x_n)_n$ is a sequence of $\R^k$ which converges to $x$, then $u^{i,n}(t,x_n)$ converges to $u^i(t,x)$. Next, let us consider a subsequence which we  denote by $\lbrace n_{k,l} \rbrace$ such that
\begin{equation*}
    |{}^{n_{k,l}}\!X_{s^{}-}^{t,x}- X_{s^{}-}^{t,x}|^2 \leq  \sup_{s\leq T}|{}^{n_{k,l}}\!X_s^{t,x}- X_s^{t,x}|^2 \longrightarrow_{n_{k,l}} 0,\, \, \mathbb{P}-a.s.
\end{equation*}
 As the mapping $x\mapsto \beta(x,e)$ is Lipschitz, then the sequence
\begin{equation*}
    \begin{aligned}
    &({}^{n_{k,l}}\!V_s^{i,t,x}(e)\mathbf{1}_{\lbrace|e| \geq \frac{1}{n_{k,l}}\rbrace})_{n_{k,l} \geq 1}\\
    \quad & = ((u^{i,n_{k,l}}(s,{}^{n_{k,l}}\!\xtx_{s-}+ \beta({}^{n_{k,l}}\!\xtx_{s-},e))- u^{i,n_{k,l}}(s,{}^{n_{k,l}}\!\xtx_{s-}))\mathbf{1}_{\lbrace|e| \geq \frac{1}{n_{k,l}}\rbrace})_{n_{k,l}\geq 1}\,\, \longrightarrow_{n_{k,l}\rightarrow \infty}\\\nonumber 
    & \quad u^{i}(s,\xtx_{s-}+ \beta(\xtx_{s-},e))- u^{i}(s,\xtx_{s-}),\,\, ds\otimes d\mathbb{P} \otimes d\lambda -a.e. \mbox{ on } [t,T] \times \Omega \times E.
    \end{aligned}
\end{equation*}
Therefore, we have that:
\begin{equation}
    V_s^{i,t,x}(e)=u^{i}(s,\xtx_{s-}+ \beta(\xtx_{s-},e))- u^{i}(s,\xtx_{s-}),\,\, ds\otimes d\mathbb{P} \otimes d\lambda -a.e.\mbox{ on } [t,T] \times \Omega \times E
\end{equation}
which is the desired result. $\Box$
\begin{remarque}
For any $\ii$, $\check{u}^i= u^i$, where $u^i$ is the Feynman-Kac representation of $Y^{i,t,x}$ given in \eqref{fkrep1} $\Box$
\end{remarque}

We are now in position to show existence of a solution for system \eqref{sys2.13}  in the case when $\lambda(.)$ is not finite and integrates $(1\wedge |e|)_{e\in E}$. 

\begin{theoreme} \label{existence2}
Assume that the assumptions (H1)-(H4) are satisfied. Then the system \eqref{sys2.13} has a solution $(Y^{i,t,x},Z^{i,t,x},V^{i,t,x},K^{i,t,x})_{i\in \mi}$. Moreover:

\nd a) There exist  bounded continuous functions $(u^i)_{\ii}$ such that for any $\ii$, $\tx$,
\begin{align} \label{repY13}
Y^{i,t,x}_s=u^i(s,\xtx_s), \,\,\forall s\in [t,T].
\end{align}
b) For any $\ii$ and $(t,x)\in [0,T]\times \R^k$,
\begin{align} \label{repV1}
V_s^{i,t,x}(e) =  1_{\{s\ge t\}} (u^{i}(s,\xtx_{s-}+& \beta(\xtx_{s-},e))- u^{i}(s,\xtx_{s-})),\nonumber\\
& \qquad ds\otimes d\mathbb{P} \otimes d\lambda \mbox{ on } [0,T] \times \Omega \times E.
\end{align}
\end{theoreme}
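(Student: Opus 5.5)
We construct the solution by a Picard-type iteration that freezes the jump argument of $\bar f_i$, so that each step falls under Proposition \ref{repvi2}. Set $u^{i,0}:=0$ for all $\ii$. Given bounded continuous functions $(u^{i,n})_{\ii}$, define
\[
\widehat f^{\,n}_i(t,x,\vec y,z):=\bar f_i\Big(t,x,\vec y,z,\int_E \gamma_i(x,e)\big(u^{i,n}(t,x+\beta(x,e))-u^{i,n}(t,x)\big)\lambda(de)\Big).
\]
By (H4), $|\gamma_i(x,e)|\le C_{\gamma_i}(1\wedge|e|)$ and $\lambda$ integrates $1\wedge|e|$, so the integral is bounded by $2C_{\gamma_i}\sup|u^{i,n}|\int_E(1\wedge|e|)\lambda(de)$. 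Its continuity in $(t,x)$ follows by dominated convergence, using the continuity of $u^{i,n}$ and $\beta$ (the continuity of $\gamma_i$ in $x$ is Lipschitz by (H4)). Hence $\widehat f^{\,n}_i$ satisfies (H1) together with \eqref{bornitude}. The only possible issue is uniformity in $(\vec y,z)$, which holds because the $q$-argument does not depend on $(\vec y,z)$ and $\bar f_i$ is uniformly continuous in $(t,x)$.

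Proposition \ref{repvi2} and the subsequent remark then give a unique solution $({}^{n+1}Y^{i},{}^{n+1}Z^i,{}^{n+1}V^i,{}^{n+1}K^i)$ of \eqref{sys2.14} with generator $\widehat f^{\,n}_i$. This solution has bounded continuous Feynman--Kac functions $u^{i,n+1}$ and satisfies the jump representation ${}^{n+1}V^i_s(e)=u^{i,n+1}(s,X_{s-}+\beta)-u^{i,n+1}(s,X_{s-})$. Along $X^{t,x}$ this representation yields the identity
\[
\int_E\gamma_i(X_r,e)\,{}^{n+1}V^i_r(e)\lambda(de)=q^{n+1}_i(r,X_r),
\]
where $q^{n}_i(t,x)$ denotes the integral appearing in $\widehat f^{\,n}_i$. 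Consequently the iteration is exactly ${}^{n+1}Y={\rm Solve}\big(\bar f(\cdot,\int\gamma\, {}^{n}V\,d\lambda)\big)$. The comparison argument of Proposition \ref{firstresult} gives bounds on $u^{i,n}$ uniform in $n$ on a small time interval, and these bounds can then be propagated.

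The main step is to show that $(u^{i,n})_n$ is Cauchy in sup norm on $[T-\delta,T]\times\R^k$ for some small $\delta$ independent of the data. Fix $(t,x)$ and compare two consecutive iterates through the switching representation of Step 1 of Proposition \ref{cvuniloc}. Both $Y$-systems are value functions, so $|{}^{n+1}Y^i_s-{}^{n}Y^i_s|\le \sup_\delta |P^{\delta,n+1}_s-P^{\delta,n}_s|$ up to the optimal strategies, exactly as in \eqref{comp1}. The difference of the generators is controlled by
\[
C_{f_i}|q^n_i-q^{n-1}_i|\le 2C_{f_i}C_{\gamma_i}\Big(\int_E(1\wedge|e|)\lambda(de)\Big)\,\|u^{n}-u^{n-1}\|_{\infty,[s,T]},
\]
plus Lipschitz terms in $(\vec y,z)$. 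A standard BSDE estimate with Girsanov linearization in $z$ and Gronwall in $\vec y$ (valid since the generators no longer depend on the jumps) then gives
\[
\|u^{n+1}-u^n\|_{\infty,[T-\delta,T]}\le C\delta\,\|u^{n}-u^{n-1}\|_{\infty,[T-\delta,T]}.
\]
Choosing $C\delta<1$ makes this a contraction in the space of bounded continuous functions with the sup norm. Here lies the real obstacle: the $L^2$ estimates give only expectation bounds, so I would instead bound the difference of value functions pointwise, using an $L^\infty$ Girsanov/comparison argument on the $P^\delta$ BSDEs rather than It\^o's formula. Iterating backwards over intervals of length $\delta$, with terminal data $u^i(T-\delta,\cdot)$ (which satisfies the consistency condition (H3) by the obstacle constraint), extends the construction to all of $[0,T]$. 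The limits $u^i$ are bounded and continuous as uniform limits.

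Finally, pass to the limit in the processes. Since ${}^{n}Y^i_s=u^{i,n}(s,X_s)$ converges uniformly, the argument of Step 3 of Proposition \ref{repvi2} shows that $({}^{n}Z^i,{}^{n}V^i)$ is Cauchy in $\hdd\times\hld$. Alternatively, one can directly take the solution of \eqref{sys2.14} with the frozen generator built from the limit $u$, and check by uniqueness that it is the limit. The jump representation passes to the limit pointwise, giving \eqref{repV1}. Substituting it into the generator turns the frozen generator into $\bar f_i(\dots,\int_E V^i\gamma_i\lambda(de))$, so the limit quadruple solves \eqref{sys2.13} and satisfies \eqref{repY13}.
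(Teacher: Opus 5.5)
Your proposal is correct and follows the paper's architecture: the same Picard scheme freezing the jump argument (this is \eqref{recurrencepp3}), Proposition~\ref{repvi2} at each step for the Feynman--Kac and jump representations, a sup-norm contraction on a strip $[T-\eta,T]$ whose length does not depend on the terminal data, extension to $[0,T]$, and passage to the limit.

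The one real difference is how you prove the contraction, and there the obstacle you single out does not exist. The paper uses exactly the $L^2$ It\^o-type estimate, but evaluates it at $s=t$, where $Y^{i,n;t,x}_t=u^{i,n}(t,x)$ is deterministic. The left-hand side is then $|u^{i,n}(t,x)-u^{i,p}(t,x)|^2$ with no expectation left. On the right-hand side, once the representation of $V^{k,n-1;t,x}$ is inserted, the integrand is bounded uniformly in $\omega$ by a constant times $\sum_k\|u^{k,n-1}-u^{k,p-1}\|^2_{\infty,\eta}$. Taking the supremum over $(t,x)$ gives \eqref{uinversz} at once.

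Your $L^\infty$ route is also valid: both iterates live in the same filtration, so $|{}^{n+1}Y^i_s-{}^{n}Y^i_s|\le\mathrm{esssup}_\delta|P^{\delta,n+1}_s-P^{\delta,n}_s|$, and you then apply Girsanov in $z$ and absorb the $\vec y$-term for small $\delta$. This yields the sup-norm bound without It\^o's formula, at the price of using the switching representation in the full filtration. As it stands it is only sketched and has to be written out.

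The remaining differences are minor:
\begin{itemize}
\item The paper keeps one global iteration. On $[T-2\eta,T-\eta]$ it treats the contribution of $[T-\eta,T]$ as a perturbation that vanishes by the convergence already obtained, rather than restarting with terminal data $u(T-\eta,\cdot)$. Your restart is fine, since the consistency condition is inherited from the obstacle constraint.
\item The paper gets the uniform bound on all of $[0,T]$ at once, by comparison with a deterministic BSDE carrying the extra term $2\theta\bar Y$, rather than strip by strip.
\item The paper passes to the limit in the processes as in Proposition~\ref{repvi2}. Your alternative of solving the system frozen at the limit $u$ is the device the paper itself uses for Theorem~\ref{thviscosite}. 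Identifying its value function with $u$ requires your stability estimate, not uniqueness alone.
\end{itemize}
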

\proof The proof follows the same steps as in the proof of Proposition 3.2 in \cite{hamadene_mnif-neffati2}, except that in our framework we should take into account of the fact that $\lee=\infty$. This difficulty is tackled by using the inequalities $|\gamma_i(x,e)|\leq \cig(1\wedge|e|)$, $\ii$, and since $\int_E (1\wedge|e|)\lambda(de) <\infty$.
\ms

\noindent \underline{Step 1}: The iterative construction

Let us consider  $((Y^{i,n;t,x},Z^{i,n;t,x},V^{i,n;t,x},K^{i,n;t,x})_{i\in \mi})_{n\geq 0}$   the sequence of processes defined recursively as follows:
 \begin{equation*}
   \vspace{0.1cm} (Y^{i,0;t,x},Z^{i,0;t,x},V^{i,0;t,x},K^{i,0;t,x}) = (0,0,0,0)\, \, \mbox{ for all } \ii \, \,  \mbox{ and for } n\ge 1 \mbox{ and } s \leq T,
  \end{equation*}
\begin{equation}\label{recurrencepp3}
\begin{split}
\begin{cases}
\vspace{0.2cm} Y^{i,n;t,x}\in \ss,  Z^{i,n;t,x} \in \hdd, V^{i,n;t,x} \in \hld \mbox{ and } K^{i,n;t,x}\in \aa;\\ \vspace{0.2cm}
Y^{i,n;t,x}_s = h_i(X_T^{t,x}) +\int_s^T \bar f_{i}(r,\xtx_r,(Y^{k,n;t,x}_r)_{k\in \mathcal{I}},Z^{i,n;t,x}_r, \int_E V^{i,n-1;t,x}_r(e)\times \\ \vspace{0.4cm}
 \qquad \qquad  \gamma_i(X_r^{t,x},e)\lambda(de))dr +K_T^{i,n;t,x}- K_s^{i,n;t,x}-\int_s^T Z_r^{i,n;t,x}dB_r-\int_s^T \int_E V_r^{i,n;t,x}(e) \tilde{\mu}(dr,de); \\\vspace{0.4cm}
Y_s^{i,n;t,x} \geqslant  \displaystyle \max_{j \in \mathcal{I}^{-i}}(Y_s^{j,n;t,x} -g_{ij}(s)); \\\vspace{0.2cm}
\textstyle \int_0^T (Y_s^{i,n;t,x} -\displaystyle \max_{j\in \mathcal{I}^{-i}}(Y_s^{j,n;t,x} -g_{ij}(s))) dK_s^{i,n;t,x} = 0.
\end{cases}
 \end{split}
\end{equation}
Thanks to the result in \cite{hamadene2015viscosity}, the system admits a unique solution. Indeed, the generators  $\check {F}_{i}(\dots)$, $\ii$, of the system do not depend on $V^{i,n;t,x}$, noting that $V^{i,n-1;t,x}$ is already determined.
Next, by an induction argument on $n$,  we are going to show that there exist deterministic continuous bounded functions  $(u^{i,n})_{\ii}$ such that for any $\tx$ and any $s \in [t,T]$:
\begin{equation}\label{Y,V}
\begin{split}
\begin{cases}
i) \,Y_s^{i,n;t,x} = u^{i,n}(s,\xtx_s);\\ 
\\
ii) \,\mbox {There exists a constant C such that for any } n\ge 0 \mbox{ and } \ii, \mbox {we have:} \\
\qq\qq\qq\qq\qq|u^{i,n}(t,x)|\le C, \,\,\forall \tx;\\\\
iii) \,V_s^{i,n;t,x}(e) = u^{i,n}(s,\xtx_{s-}+ \beta(\xtx_{s-},e))- u^{i,n}(s,\xtx_{s-}),\,\, ds\otimes d\mathbb{P} \otimes d\lambda\,\, a.e\mbox{ on } [t,T] \times \Omega \times E.
\end{cases}
\end{split}
\end{equation} 

Indeed,  for $n=0$,  the property holds true with $u^{i,0}=0$, $\ii$. Suppose now that it is satisfied for some $n$. Then $(Y^{i,n+1;t,x},Z^{i,n+1;t,x},V^{i,n+1;t,x},K^{i,n+1;t,x})$ verifies (we omit the dependence on $t,x$ as there is no confusion): $ \forall s \in [t,T]$ and $\ii$,
\begin{equation}\label{eq343}
\begin{split}
\begin{cases}
\vspace{0.2cm} Y_s^{i,n+1} = h_i(X_T^{t,x}) +\int_s^T \bar f_i(r,X_r^{t,x},(Y_r^{k,n+1})_{k \in \mi},Z_r^{i,n+1},\int_E \{u^{i, n}(r,\xtx_{r-}+ \\\vspace{0.3cm}
 \qquad \quad \beta(\xtx_{r-},e))- u^{i,n}(r,\xtx_{r-})\}\gamma_i(\xtx_r,e)\lambda(de))dr+K_T^{i,n+1}- K_s^{i,n+1}\\\vspace{0.3cm}
 \qquad \quad - \int_s^T Z_r^{i,n+1}dB_r-\int_s^T \int_E V_r^{i,n+1}(e) \tilde{\mu}(dr,de);\\\vspace{0.3cm}
Y_s^{i,n+1} \geqslant  \displaystyle \max_{j \in \mathcal{I}^{-i}}(Y_s^{j,n+1} -g_{ij}(s)); \\\vspace{0.2cm}   
 \int_t^T (Y_s^{i,n+1} -\displaystyle \max_{j\in \mathcal{I}^{-i}}(Y_s^{j,n+1} -g_{ij}(s))) dK_s^{i,n+1} = 0.
\end{cases}
\end{split}
\end{equation} 
The generators of the system of reflected BSDEs \eqref{eq343} are given by: 
$$\check F_i^{n+1}(t,x,\vec y,z)=\bar f_i(t,x,\vec y,z,\txst \int_E \{u^{i, n}(t,x+ \beta(x,e))- u^{i,n}(t,x)\}\gamma_i(x,e)\lambda(de)), \ii \mbox{ and }n\ge 1,$$
since the process $\xtx$ is RCLL and the functions $u^{i,n}, \beta^i$ and $\g_i$ are continuous w.r.t. $x$. Next:

a) The function 
$(t,x)\mapsto \int_E \{u^{i, n}(t,x+ \beta(x,e))- u^{i,n}(t,x)\}\gamma_i(x,e)\lambda(de))$ is continuous and bounded by the induction hypothesis and by applying the Lebesgue dominated convergence theorem.

b) For any $\ii$, $(t,x)\mapsto \check F_i^{n+1}(t,x,\vec y,z)$ is continuous uniformly w.r.t $(\vec y,z)$ since $\bar f_i(t,x,\vec y,z,q)$ verifies (H1)-i) and is Lipschitz w.r.t $(\vec y,z,q)$ uniformly in $(t,x)$. 
\medskip 

\nd Therefore, by Proposition 4.2 in \cite{hamadene2015viscosity}, there exist deterministic continuous functions of polynomial growth $(u^{i,n+1})_{\ii}$ such that for any $\ii$ and $\stt$
$$Y_s^{i,n+1;t,x} = u^{i,n+1}(s,\xtx_s).$$

c) Let $(\bar Y, \bar Z)$ be the solution of the following standard BSDE: 
\begin{equation*}
    \begin{cases}
    \bar Y \in \ss,\, \bar Z \in \hdd;\\
    \bar Y_s = \bar C + \int_s^T \big\lbrace \bar C+ m\,C^y_f\,\bar Y_r + C^z_f\,|\bar Z_r| +2\theta \bar Y_r\big\rbrace dr - \int_s^T \bar Z_rdB_r,\,s\le T,;
    \end{cases}
\end{equation*}
where: (i) $C_f^y$ , $C_f^z$ and $C_f^q$ are the maximum (w.r.t $i$) of the Lipschitz constants of the functions $\bar f_i(t,x,\vy,z,q)$ w.r.t. $\vy$, $z$ and $q$ respectively; (ii) $\bar C$ is a constant of boundedness of $h_i$ and $\bar f_i(t,x,\vec 0, 0,0)$; (iii)   the constant $\theta$ is given by:
\begin{equation*}
\begin{array}{c}
    \theta = C^q_f\,(\underbrace{\max_{i=1,...,m}\cig}_{C_\gamma})\int_E(1\wedge|e|)\lambda(de).
    \end{array}
\end{equation*}
Therefore if for any $\ii$ and $\stt$, $|u^{i,n}(s,\xtx_s)|\le \bar Y_s$, then 
for any $\ii$ and $\stt$, $|u^{i,n+1}(s,\xtx_s)|\le \bar Y_s$ (one can see \cite{hamadene_mnif-neffati}, pp.12 for more details). As the process $\bar Y$ is deterministic continuous, then it is bounded. Finally the proof of $ii)$ is proved by an obvious induction since it is satisfied for $n=0$. 

iii) The representation of the processes $V^{i,n+1;t,x}$ stems from Proposition \ref{repvi2} since by induction hypothesis on $n$, the functions 
$(\check F_i^{n+1})_{\ii}$, $(h_i)_{i \in \mathcal{I}}$ and $(g_{ij})_{i,j \in \mathcal{I}}$ fulfill the requirements of Proposition \ref{firstresult}. 

It follows that points $i)$, $ii)$ and $iii)$ above are valid for $n+1$ and then they hold true for any $n\ge 0$. 
 
\begin{remarque}\label{sur0t}
 For $s\in [0,t]$, $\xtx_s=x$, $Y_t^{i,n;t,x}=u^{i,n}(t,x)$, then in considering the declination of system \eqref{recurrencepp3} on the time interval $[0,t]$, we can easily show by induction that 
 $Z^{i,n}_s1_{[s\le t]}=0, \,\,ds\otimes d\mathbb{P}-a.e$ and 
 $V^{i,n}_s(e)1_{[s\le t]}=0,\,\,  ds\otimes d\mathbb{P}\otimes d\lambda-a.e.$ since the data are continuous and deterministic.
 \end{remarque}

 \noindent \sol{Step 2}: Convergence of $(u^{i,n})_{n\ge 1}$
 
 \noindent Following similar arguments as in the proof of Theorem 3.2, pp.6 in \cite{hamadene_mnif-neffati2}, we obtain for some $\alpha_0>0$ and $\kappa$, constants which depend only on the Lipschitz constants of $(\bar f_i)_{\ii}$ w.r.t $\vec y$, $z$ and $q$: $\forall n,p\geq 1, s \in [t,T]$,
\begin{align*}
     &\E \Big[  e^{\alpha_0 s}|(Y_s^{k,n;t,x})_{k\in \mathcal{I}}- (Y_s^{i,p;t,x})_{k\in \mathcal{I}}|^2\Big]\nnb \\[6pt]
     &\quad \leq \kappa \E\Big[\textstyle \int_s^T e^{\alpha_0 r} \big(\int_E \sum_{k=1,m}|\{V_r^{k,n-1;t,x}(e)-V_r^{k,p-1;t,x}(e)\} \gamma_k(\xtx_r,e)| \lambda(de)\big)^2dr\Big].
\end{align*}
Take now $s=t$ and making use of \eqref{Y,V} to obtain:   $\forall \ii$,
\begin{align}\label{eqpp3}
    &|u^{i,n}(t,x)- u^{i,p}(t,x)|^2\nonumber\\[6pt]
    &\quad \leq \kappa  \E\Big[\textstyle \int_t^T e^{\alpha_0 (r-t)} \big(\int_E \sum_{k=1,m} |\{ u^{k, n-1}(r,\xtx_{r-}+ \beta(\xtx_{r-},e))- u^{k,n-1}(r,\xtx_{r-}) \nonumber \\[6pt]
& \qquad - ( u^{k, p-1}(r,\xtx_{r-}+ \beta(\xtx_{r-},e))-u^{k,p-1}(r,\xtx_{r-}))\}\gamma_k(\xtx_r,e) |\lambda(de)\big)^2dr\Big].
\end{align}
Now, let $\eta>0$ and let us set $$\|u^{i,n}- u^{i,p}\|_{\infty,\eta}:=\sup_{(t,x)\in [T-\eta, T]\times \R^k}|u^{i,n}(t,x) - u^{i,p}(t,x)|.
$$
Since $u^{i,n}$ is uniformly bounded, we then have: For any $t \in [T-\eta, T]$ and $\ii$, 
\begin{align*}
    &|u^{i,n}(t,x) - u^{i,p}(t,x)|^2\\[6pt]
    & \quad \leq \kappa \underbrace{(\max_{i=1,m}\cig)^2}_{C^2_\g}\E \Big[\textstyle  \int_{t}^T   e^{\alpha_0 (r-t)} \Big(\int_E \displaystyle \sum_{k=1,m} |\{ u^{k, n-1}(r,\xtx_{r-}+ \beta(\xtx_{r-},e))- u^{k,n-1}(r,\xtx_{r-}) \nonumber \\[6pt]
&  \qquad - ( u^{k, p-1}(r,\xtx_{r-}+ \beta(\xtx_{r-},e))-u^{k,p-1}(r,\xtx_{r-}))\}|(1\wedge |e|)\lambda(de)\Big)^2dr\Big]\\
&\quad \le \textstyle 4\kappa C^2_\g \{\int_E  (1\wedge |e|)\lambda(de)\}^2\times  \textstyle \int_{t}^T dr  e^{\alpha_0 (r-t)} \{\displaystyle \sum_{k=1,m}\|u^{k,n-1}- u^{k,p-1}\|_{\infty,\eta}\}^2\nonumber\\[6pt]
&\quad \le \underbrace{\textstyle 4m\kappa  \{C_\g\int_E  (1\wedge |e|)\lambda(de)\}^2}_{\Xi(\kappa,\g)} \times \displaystyle \sum_{k=1,m}\|u^{k,n-1}- u^{k,p-1}\|_{\infty,\eta}^2\textstyle \times  \underbrace{\int_{t}^T dr  e^{\alpha_0 (r-t)}}_{\le \alpha_0^{-1}(e^{\eta \alpha_0}-1)}.\nonumber
\end{align*}
Now, let $\eta$ be a constant such that $ \frac{\Xi(\kappa,\g)}{\alpha_0} m(e^{\alpha_0 \eta }-1)= \frac{3}{4}$. Note that $\eta$ does not depend on the terminal conditions $(h_i)_{\ii}$ neither on $T$. Therefore we deduce from the last inequality, in taking the supremum over $(t,x)$, that for any $n,q\ge 1$,
\begin{align}
\sum_{i=1,m}\|u^{i,n}- u^{i,p}\|^2_{\infty,\eta} \leq  \frac{3}{4}\sum_{i=1,m}\|u^{i,n-1}- u^{i,p-1}\|^2_{\infty,\eta} \mbox{ and then }\limsup_{n,p\rw \infty}\sum_{i=1,m}\|u^{i,n}- u^{i,p}\|^2_{\infty,\eta}=0.
\lb{uinversz}
\end{align}
It means that the sequence $((u^{i,n})_{\ii})_{n\geq 0}$ is uniformly convergent in $[T-\eta,T]\times \R^k$. So for  $(t,x) \in [T-\eta,T]\times \R^k$, let us set $u^i(t,x) = \lim_{n \rightarrow \infty} u^{i,n}(t,x)$, $\ii$. Note that $(u^i)_{\ii}$ are continuous bounded functions on $[T-\eta,T]\times \R^k$.

Next let us set 
$$\|u^{i,n}- u^{i,p}\|_{\infty,2\eta}:=\sup_{(t,x)\in [T-2\eta, T-\eta]\times \R^k}|u^{i,n}(t,x) - u^{i,p}(t,x)|
$$
and let $t\in [T-2\eta, T-\eta]$. From \eqref{eqpp3} we have:
\begin{align*}
    |u^{i,n}(t,x)&- u^{i,p}(t,x)|^2 \\&\leq \kappa  \E\Big[\textstyle \int_t^{T-\eta} e^{\alpha_0 (r-t)} \big(\int_E \sum_{k=1,m} |\{ u^{k, n-1}(r,\xtx_{r-}+ \beta(\xtx_{r-},e)) - u^{k,n-1}(r,\xtx_{r-}) \nonumber \\[6pt]
&\qq \qq\qq - ( u^{k, p-1}(r,\xtx_{r^{-}}+ \beta(\xtx_{r-},e)) -u^{k,p-1}(r,\xtx_{r-}))\}\gamma_k(\xtx_r,e) |\lambda(de)\big)^2dr\Big]\\&
\qq +\kappa  \E\Big[\textstyle \int_{T-\eta}^T e^{\alpha_0 (r-t)} \big(\int_E \sum_{k=1,m} |\{ u^{k, n-1}(r,\xtx_{r-}+ \beta(\xtx_{r-},e))- u^{k,n-1}(r,\xtx_{r-})\\&\qq \qq\qq - ( u^{k, p-1}(r,\xtx_{r-}+ \beta(\xtx_{r-},e)) -u^{k,p-1}(r,\xtx_{r-}))\}\gamma_k(\xtx_r,e) |\lambda(de)\big)^2dr\Big]\\
&\leq\kappa  \E\Big[\textstyle \int_{t}^{T-\eta} e^{\alpha_0 (r-t)} \big(\int_E \sum_{k=1,m} |\{ u^{k, n-1}(r,\xtx_{r-}+ \beta(\xtx_{r-},e)) \nonumber - u^{k,n-1}(r,\xtx_{r-})\\&\qq \qq\qq- ( u^{k, p-1}(r,\xtx_{r-}+ \beta(\xtx_{r-},e))-u^{k,p-1}(r,\xtx_{r-}))\}\gamma_k(\xtx_r,e) |\lambda(de)\big)^2dr\Big]\\&
\qq +\textstyle\frac{3}{4m}e^{\alpha_0 \eta}\sum_{k=1,m}\|u^{k,n-1}- u^{k,p-1}\|_{\infty,\eta}^2\\&
\leq \Xi(\kappa,\g)\sum_{k=1,m}\|u^{k,n-1}- u^{k,p-1}\|_{\infty,2\eta}^2\textstyle \times  \int_{t}^{T-\eta} dr  e^{\alpha_0 (r-t)}+\textstyle\frac{3}{4m}e^{\alpha_0 \eta}\sum_{k=1,m}\|u^{k,n-1}- u^{k,p-1}\|_{\infty,\eta}^2\\&
\leq\textstyle \frac{3}{4m}\sum_{k=1,m}\|u^{k,n-1}- u^{k,p-1}\|_{\infty,2\eta}^2+\frac{3}{4m}e^{\alpha_0 \eta}\sum_{k=1,m}\|u^{k,n-1}- u^{k,p-1}\|_{\infty,\eta}^2.
\end{align*}It implies that 
\begin{align*}
  \textstyle \sum_{k=1,m}\|u^{k,n}- u^{k,p}\|_{\infty,2\eta}^2\le \frac{3}{4}\sum_{k=1,m}\|u^{k,n-1}- u^{k,p-1}\|_{\infty,2\eta}^2+\frac{3}{4}e^{\alpha_0 \eta}\sum_{k=1,m}\|u^{k,n-1}- u^{k,p-1}\|_{\infty,\eta}^2.
\end{align*}
Using now \eqref{uinversz} to deduce that 
$$
\limsup_{n,p\rw \infty}\sum_{k=1,m}\|u^{k,n}- u^{k,p}\|_{\infty,2\eta}^2=0.$$
Then once more the sequence $((u^{i,n})_{\ii})_{n\geq 0}$ is uniformly convergent in $[T-2\eta,T-\eta]\times \R^k$. So for  $(t,x) \in [T-2\eta,T-\eta]\times \R^k$, let us set $u^i(t,x) = \lim_{n \rightarrow \infty} u^{i,n}(t,x)$, $\ii$. Note that $(u^i)_{\ii}$ are continuous bounded functions on $[T-2\eta,T]\times \R^k$ by concatenation. Now by applying   repeatedly the same reasoning on each time interval $ [T-(j+1)\eta, T-j\eta]$ of fixed length $\eta$ and pasting the solutions, we obtain the uniform convergence of $((u^{i,n})_{\ii})_{n}$ in $[0,T] \times \R^k$. So for  $\ii$ and $(t,x) \in [0,T]\times \R^k$, let us set $u^i(t,x) = \lim_{n \rightarrow \infty} u^{i,n}(t,x)$, $\ii$. Note that $(u^i)_{\ii}$ are continuous bounded functions on $[0,T]\times \R^k$.
\ms

\noindent \underline{Step 3}: Convergence of $(Y^{i,n;t,x},Z^{i,n;t,x},V^{i,n;t,x},K^{i,n;t,x})_{n\ge 1}$
\ms 

For any any $\ii$, the convergence of $(Y^{i,n;t,x},Z^{i,n;t,x},V^{i,n;t,x},K^{i,n;t,x})_{n\ge 1}$ can be obtained exactly in the same way as in the proof of Proposition \ref{repvi} since for any $\ii$, $(u^{i,n})_{n\ge 1}$ converges uniformly to $u^i$. Consequently, there exists a quadruple of processes 
$(Y^{i,t,x},Z^{i,t,x},V^{i,t,x},K^{i,t,x})$ which belongs to $\ss \times \hdd \times \hld \times \aa$ such that: 
\begin{align*}
    &\E\Big[ \sup_{s\leq T} |Y_s^{i,n:t,x} -Y_s^{i,t,x}|^2 + \textstyle \int_0^T |Z_s^{i,n:t,x} -Z_s^{i,t,x}|^2ds\\[4pt]
    & +\textstyle \int_0^T\int_E |V_s^{i,n:t,x}(e) -V_s^{i,t,x}(e)|^2\lambda(ds)de+ \displaystyle \sup_{s\leq T}|K_s^{i,n:t,x} -K_s^{i,t,x}|^2\Big] \rightarrow 0\,\, \mbox{ as } n\rightarrow \infty.
\end{align*}
Moreover we have: For any $\ii$ and $s\in [t,T]$, 
\begin{equation}
\begin{split}
\begin{cases}
\vspace{0.3cm}
Y_s^{i,t,x} = h_i(X_T^{t,x})+ \int_s^T \bar f_i(r,X_r^{t,x},(Y_r^{k,t,x})_{k\in \mi},Z_r^{i,t,x}, \int_E V_r^{i,t,x}(e)\gamma_i(X_r^{t,x},e) \lambda(de))dr \\\vspace{0.3cm}
 \qquad \qquad   +K_T^{i,t,x} - K_s^{i,t,x} -\int_s^T Z_r^{i,t,x}dB_r -\int_s^T \int_E V_r^{i,t,x}(e) \tilde{\mu}(dr,de);\\\vspace{0.3cm}
 Y_s^{i,t,x} \geqslant  \displaystyle \max_{j \in \mathcal{I}^{-i}}(Y_s^{j,t,x} -g_{ij}(s));\\\vspace{0.3cm}
 \int_t^T (Y_s^{i,t,x} -\displaystyle \max_{j\in \mathcal{I}^{-i}}(Y_s^{j,t,x} -g_{ij}(s))) dK_s^{i,t,x} =0.
\end{cases}
\end{split}
\end{equation} 
Therefore $(Y^{i,t,x},Z^{i,t,x},V^{i,t,x},K^{i,t,x})_{\ii}$ is a solution of \eqref{sys2.13}. Finally as $((u^{i,n})_{\ii})_{n\ge 1}$ converges uniformly to $(u^{i})_{\ii}$ and 
$((V^{i,n;t,x})_{\ii})_{n\ge 1}$ converges to $(V^{i,t,x})_{\ii}$ in $\hld$
then by \eqref{Y,V}-iii) we obtain that \eqref{repV1} holds. This completes the proof.
\qed

Finally we focus on the uniqueness of the Markovian solution of the system of reflected BSDES \eqref{sys2.13} which we precise {\bc in the following definition}. 

\begin{definition}
A solution 
$(Y^{i,t,x},Z^{i,t,x},V^{i,t,x},K^{i,t,x})_{\ii}$ of system \eqref{sys2.13} is called Markovian if there exist bounded continuous deterministic functions $(\tilde u^i)_{\ii}$, on $[0,T]\times \R^k$, such that for any $\ii$, $\tx$,
\begin{align} \label{repY1uniq}
Y^{i,t,x}_s=\tilde u^i(s,\xtx_s), s\in [t,T],
\end{align} and 
\begin{align} \label{repV1uniq}
V_s^{i,t,x}(e) =  1_{\{s\ge t\}} (\tilde u^{i}(s,\xtx_{s-}+& \beta(\xtx_{s-},e))- \tilde u^{i}(s,\xtx_{s-})),\nonumber\\
& \qquad ds\otimes d\mathbb{P} \otimes d\lambda-a.e \mbox{ on } [0,T] \times \Omega \times E.
\end{align}
\end{definition}
We then have:
\begin{theoreme}\label{unic}
The Markovian solution of system \eqref{sys2.13} is unique. 
\end{theoreme}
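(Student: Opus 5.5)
Let $(Y^{i},Z^{i},V^{i},K^{i})_{\ii}$ and $(\tilde Y^{i},\tilde Z^{i},\tilde V^{i},\tilde K^{i})_{\ii}$ be two Markovian solutions of \eqref{sys2.13}, with representing bounded continuous functions $(u^i)_{\ii}$ and $(\tilde u^i)_{\ii}$ as in \eqref{repY1uniq}--\eqref{repV1uniq}. The plan is to show $u^i=\tilde u^i$ for all $\ii$. Uniqueness of $(Z,V,K)$ then follows by the standard argument: Itô's formula applied to $|Y^i-\tilde Y^i|^2$ gives $Z^i=\tilde Z^i$ and $V^i=\tilde V^i$, and the equation then identifies $K^i=\tilde K^i$.

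To compare $u^i$ and $\tilde u^i$, I freeze the jump argument of the generators. For $\ii$, define
$$F_i(t,x,\vec y,z):=\bar f_i\Big(t,x,\vec y,z,\textstyle\int_E\{u^i(t,x+\beta(x,e))-u^i(t,x)\}\gamma_i(x,e)\lambda(de)\Big),$$
and define $\tilde F_i$ in the same way with $\tilde u^i$. As in Step 1 of the proof of Theorem \ref{existence2}, these are continuous in $(t,x)$ uniformly in $(\vec y,z)$ (by dominated convergence, using $|\gamma_i|\le \cig(1\wedge|e|)$, $\int_E(1\wedge|e|)\lambda(de)<\infty$ and the boundedness of $u^i$). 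They are Lipschitz in $(\vec y,z)$, bounded at $(\vec 0,0)$, and satisfy (H1)-iv). By \eqref{repV1uniq}, $(Y^i,Z^i,V^i,K^i)_{\ii}$ solves the system \eqref{sys2.14} with $\widehat f_i=F_i$, and the tilde solution solves it with $\widehat f_i=\tilde F_i$. These systems have generators that do not depend on the jump component, so the framework of \cite{hamadene2015viscosity} applies: the solution is unique, and it admits the a priori estimate used in Step 2 of the proof of Theorem \ref{existence2}. Following those arguments (as in \cite{hamadene_mnif-neffati2}, Theorem 3.2), there are constants $\alpha_0,\kappa$, depending only on the Lipschitz constants of $(\bar f_i)_{\ii}$, such that for $s\in[t,T]$
$$\E\big[e^{\alpha_0 s}|(Y^{k}_s)_k-(\tilde Y^{k}_s)_k|^2\big]\le\kappa\,\E\Big[\int_s^T e^{\alpha_0 r}\Big(\int_E\sum_{k}|(V^k_r-\tilde V^k_r)(e)\gamma_k(\xtx_r,e)|\lambda(de)\Big)^2dr\Big].$$
Taking $s=t$ and inserting \eqref{repV1uniq}, I obtain the analogue of \eqref{eqpp3} with $(u^{k,n-1},u^{k,p-1})$ replaced by $(u^k,\tilde u^k)$ on the right and $(u^{i,n},u^{i,p})$ replaced by $(u^i,\tilde u^i)$ on the left.

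The contraction step then proceeds backward in time. Set $\|u-\tilde u\|_{\infty,\eta}=\sup_{[T-\eta,T]\times\R^k}$ for the same $\eta$ as in Step 2 of the proof of Theorem \ref{existence2}, chosen so that $\Xi(\kappa,\gamma)m(e^{\alpha_0\eta}-1)/\alpha_0=3/4$. The same computation gives
$$\sum_i\|u^i-\tilde u^i\|^2_{\infty,\eta}\le\tfrac34\sum_i\|u^i-\tilde u^i\|^2_{\infty,\eta}.$$
The left side is finite because both families are bounded, so $u^i=\tilde u^i$ on $[T-\eta,T]\times\R^k$. On $[T-2\eta,T-\eta]$ the contribution from $[T-\eta,T]$ now vanishes, and the same inequality yields equality there as well. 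Iterating over intervals of length $\eta$ gives $u^i=\tilde u^i$ on $[0,T]\times\R^k$.

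I expect the main obstacle to be justifying the a priori comparison estimate. The obstacle is that $\lambda(E)=\infty$ and there is no monotonicity in $q$, so comparison theorems are unavailable. I would therefore not use comparison. Instead I would prove the estimate directly, by Itô's formula on $e^{\alpha r}|Y^i_r-\tilde Y^i_r|^2$, treating the jump-argument difference as an exogenous source term that is controlled by $\cig(1\wedge|e|)$. The reflection terms need care: one must show $\sum_i\int(Y^i-\tilde Y^i)d(K^i-\tilde K^i)\le0$ in a suitable sense. The cleanest way to get this is via the switching representation \eqref{fv2} with frozen generators. For each $i$, I would bound $|Y^i_s-\tilde Y^i_s|$ by the difference of the two $P$-processes driven by a common strategy: the optimal strategy of one problem for one direction, and the optimal strategy of the other for the reverse direction. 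The resulting standard BSDE difference estimate then has the source term $\Sigma(r)$ bounded by $C_f^q$ times the jump-integral difference. Together with the Lipschitz coupling in $\vec y$ and Gronwall's inequality, this yields the displayed inequality.
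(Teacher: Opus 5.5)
Your proposal is correct and follows essentially the same route as the paper. You freeze the jump argument through the representation \eqref{repV1uniq}, use the switching representation with frozen generators (comparing along a common strategy) to get the Step~2 type estimate, and run the backward contraction on time intervals of fixed length; the only cosmetic difference is that you absorb the $\vec y$-coupling by Gronwall before contracting, whereas the paper keeps the term $\sum_k|u^k-\tilde u^k|(r,\xtx_r)$ inside the estimate \eqref{estiu5} and contracts with factor $\frac12$.
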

\proof Let 
$(\bar Y^{i,t,x},\bar Z^{i,t,x},\bar V^{i,t,x},\bar K^{i,t,x})_{\ii}$ be a Markovian solution of \eqref{sys2.13} associated with the continuous bounded functions $(\tilde u^i)_{\ii}$. Therefore for any $\ii$ and $s\in [t,T]$,
$$\bar Y^{i,t,x}_s=\tilde u^i(s,\xtx_s)
$$
and  
\begin{equation}
\begin{split}
\begin{cases}
\vspace{0.3cm}
\bar Y_s^{i,t,x} = h_i(X_T^{t,x})+\bar K_T^{i,t,x} - \bar K_s^{i,t,x} -\int_s^T \bar Z_r^{i,t,x}dB_r -\int_s^T \int_E \bar V_r^{i,t,x}(e)  \\\vspace{0.3cm}
 \qquad + \int_s^T \bar f_i(r,X_r^{t,x},(\bar Y_r^{k,t,x})_{k\in \mi},\bar Z_r^{i,t,x}, \int_E(\tilde u^{i}(r,\xtx_{r-}+ \beta(\xtx_{r-},e))- \tilde u^{i}(r,\xtx_{r-}))\gamma_i(X_r^{t,x},e) \lambda(de))dr;\\\vspace{0.3cm}
 Y_s^{i,t,x} \geqslant  \displaystyle \max_{j \in \mathcal{I}^{-i}}(Y_s^{j,t,x} -g_{ij}(s));\\\vspace{0.3cm}
 \int_t^T (Y_s^{i,t,x} -\displaystyle \max_{j\in \mathcal{I}^{-i}}(Y_s^{j,t,x} -g_{ij}(s))) dK_s^{i,t,x} =0.
\end{cases}
\end{split}
\end{equation} 
In this equation, by continuity of the functions $\bar f_i$, $\beta$ and $\tilde u^i$ and since $X^{t,x}_{r}$ is rcll one can replace $X^{t,x}_{r-}$ with $X^{t,x}_{r}$. 

Next let us consider the triplet of processes $(P^{\d}, N^{\d}, Q^{\d})$ associated with the admissible strategy $\d \in \mathcal{A}_s^i$ and which solves: $$\left\{
  \begin{array}{l}
  (P^\d,N^{\d},Q^{\d})\in \spa;\\\\
  P_s^{\d} = h^{\d}(X_T^{t,x}) + \int_s^T f^{\d}(r,X_r^{t,x},  N_r^{\d}) dr - \int_s^T N_r^{\d} dB_r - \int_s^T \int_E Q_r^{\d}(e) \tilde{\mu}(dr,de) - A_T^{\d} + A_s^{\d},\,s\in [t,T]
  \end{array}\right.$$
  where, for any $s\in [t,T]$ when $\d_s =i$, $f^{\d}(s,X_s^{t,x},z)$ is equal to 
  $$\begin{array}{l}
  \bar{f}_i(s,X_s^{t,x}, (\tilde{u}^k(s,X_s^{t,x})_{k \in \mi},z, \int_E \gamma_i(X_s^{t,x},e) \{\tilde{u}^{i}(s,\xtx_{s-}+ \beta(\xtx_{s-},e))- \tilde{u}^{i}(s,\xtx_{s-})\} \lambda(de)).
 \end{array}$$
Therefore, we have the following representation of $\bar{Y}^i$:
\begin{equation*}
\bar{Y}_s^{i} = \displaystyle \mbox{esssup}_{\d \in \mathcal{A}_s^i}(P_s^{\d} - A_s^{\d}), \, \, \, s \leq T. 
\end{equation*} 
\def \bart{\tilde}
Next, arguing as in the Step 2 of the proof of Theorem \ref{existence2}, we deduce that for any $\ii$ and $(t,x)\in [0,T]\times \R^k$,
\begin{equation}\label{estiu5}
\begin{aligned}
|u^{i}(t,x) &- \bart{u}^{i}(t,x)|^2\leq
  \kappa_1  \E \big[ \textstyle \int_t^T e^{\alpha_0 (r-t)}\{\sum_{k=1,m}
  |u^{k}(r,\xtx_{r})- \bart{u}^{k}(r,\xtx_{r})|+ \\\\& \txtl
  \int_E |\{\sum_{k=1}^m(u^{k}- \bart{u}^{k})(r,\xtx_{r-} + \beta(\xtx_{r-},e))-(u^{k}- \bart{u}^{k})(r,\xtx_{r-})\}\gamma_k(\xtx_{r},e)|\lambda(de)\}^2dr\big]
\end{aligned}
\end{equation}
where $\kappa_1$
and $\alpha_0>0$ are constants which depend only on the Lipschitz constants of $(\bar f_i)_{\ii}$ w.r.t $\vec y$, $z$ and $q$. Next for $\eta >0$ and $t\in [T-\eta,T]$, let us set,  
\begin{eqnarray*}
||u^i-\bart u^i||_{\infty,\eta}:=\Sup_{(t,x)\in [T-\eta,T]\times \R^k}|u^i(t,x)-\bart u^i(t,x)|.
\end{eqnarray*}
From the boundedness of $u^i,\,\bart u^i$ and as $|\gamma_i(x,e)|\leq \cig (1\wedge |e|)$  for all $\ii$, then we have for all $t\in [T-\eta, T]$ and $x\in \R^k$,
\begin{equation}\label{estiu2}
\begin{array}{ll}|u^{i}(t,x) - \bart{u}^{i}(t,x)|^2&\leq  3m\kappa_1    \textstyle \int_t^T dr e^{\alpha_0 (r-t)}
\big \{\sum_{k=1,m}\|u^k-\bart u^k\|^2_{\infty,\eta}\\{}&\qq\qq+4(\max_{i=1,m}\cig)^2 (\int_E1\wedge |e|\lambda(de))^2\sum_{k=1,m}\|u^k-\bart u^k\|^2_{\infty,\eta}\big \}\\\\{}&
=\underbrace{\textstyle m(3\kappa_1 +4(C_\g)^2(\int_E1\wedge |e|\lambda(de))^2) }_{\Xi_1}\{\sum_{k=1,m}\|u^k-\bart u^k\|^2_{\infty,\eta}\textstyle \int_t^T dr e^{\alpha_0 (r-t)}\\{}&
\le \Xi_1\frac{e^{\alpha_0 \eta}-1}{\alpha_0}\sum_{k=1,m}\|u^k-\bart u^k\|^2_{\infty,\eta}.
\end{array}
\end{equation}
It implies that 
\begin{equation*}
\begin{array}{l}\sum_{k=1,m}\|u^k-\bart u^k\|^2_{\infty,\eta}\leq m \Xi_1\frac{e^{\alpha_0 \eta}-1}{\alpha_0}\sum_{k=1,m}\|u^k-\bart u^k\|^2_{\infty,\eta}. 
\end{array}
\end{equation*}
Take now $\eta_0$ such that $m \Xi_1\frac{e^{\alpha_0 \eta_0}-1}{\alpha_0}=\frac{1}{2}$ to obtain that 
$$
\sum_{k=1,m}\|u^k-\bart u^k\|^2_{\infty,\eta_0}=0$$
which means that for any $\ii$ and $(t,x)\in [T-\eta_0, T]\times \R^k$, 
$u^i(t,x)=\tilde u^i(t,x)$. Note that $\eta_0$ does not depend on $T$. Next from \eqref{estiu5}, we have: For any $t\in [T-2\eta_0,T-\eta_0]$,  
\begin{equation*}
\begin{aligned}
|u^{i}(t,x) &- \bart{u}^{i}(t,x)|^2\leq
  \kappa_1  \E \big[ \textstyle \int_t^{T-\eta_0} e^{\alpha_0 (r-t)}\{\sum_{k=1,m}
  |u^{k}(r,\xtx_{r})- \bart{u}^{k}(r,\xtx_{r})|+ \\\\& \txtl
  \int_E |\{\sum_{k=1}^m(u^{k}- \bart{u}^{k})(r,\xtx_{r-} + \beta(\xtx_{r-},e))-(u^{k}- \bart{u}^{k})(r,\xtx_{r-})\}\gamma_k(\xtx_{r},e)|\lambda(de)\}^2dr\big]
\end{aligned}
\end{equation*}
since for any $(t,x)\in [T-\eta_0, T]\times \R^k$, 
$u^i(t,x)=\tilde u^i(t,x)$. Arguing as previously to obtain that  
for any $\ii$ and $(t,x)\in [T-2\eta_0, T-\eta_0]\times \R^k$, 
$u^i(t,x)=\tilde u^i(t,x)$. Repeat now this procedure as many times as necessary to deduce that for any $\ii$, 
$u^i=\tilde u^i$, which completes the proof and henceforth, the Markovian solution of \eqref{sys2.13} is unique. \qed 
\bs

\section{Connection with systems of IPDEs with inter-connected obstacles}
In this section, we are going to study the existence and uniqueness of the viscosity solution of the IPDEs system \eqref{sys1.1}. The candidate for a solution are the functions $(u^i)_{\ii}$ defined in \eqref{repY1} by which we have the Feynman-Kac representation of $(Y^{i,t,x})_{\ii}$. To begin with, we recall the definition of the viscosity solution of the system. 
\begin{definition}\label{def4.1}
A family of deterministic continuous functions $\vec u:=(u^i)_{i\in \mi}$  is a viscosity supersolution (resp. subsolution) of \eqref{sys1.1} if: 
$\forall \ii$, 
\begin{align*}
& \mbox{a)} \,u^i(T,x) \geq (\mbox{resp.} \leq )\,\, h_i(x) , \,\,\forall x \in \R^k\,\,; \\
& \mbox{b) if}\,\phi \in {\cal C}^{1,2}([0,T] \times \R^k)\mbox{  is 
such that $(t,x) \in [0,T) \times \R^k$ a global minimum}\\
& \qquad \qquad \qquad \mbox{ (resp. maximum) point of $u^i - \phi$}
\end{align*}
then
\begin{align*}
\min \Big\lbrace &u^i(t,x) - \displaystyle \max_{j \in {\mathcal{I}^{-i}}}(u^j(t,x)-g_{ij}(t)) ;-\partial_t\phi(t,x) - \mathcal{L}\phi(t,x) - \mathcal{K}\phi(t,x)\\
 & \qq \qq - {\bar f_i}(t,x,(u^k(t,x))_{k=1,...,m},(\sigma^\top D_x\phi)(t,x), \mathcal{B}_iu^i(t,x))\Big\rbrace \geq \,\,(resp. \le )\,\,0. 
\end{align*}
We say that $\vec u:=(u^i)_{i\in \mi}$ is a viscosity solution of \eqref{sys1.1} if it is both  a supersolution and subsolution of \eqref{sys1.1}.
\end{definition}
Note that 
  in this definition, we have $\mathcal{B}_iu^i(t,x)$ instead of  $\mathcal{B}_i\phi(t,x)$ in the argument of $\bar f_i$, where $\phi$ is the test function. Indeed, $\mathcal{B}_iu^i(t,x)$ is well-posed since $u^i$ is bounded, $|\g_i(x,e)|\le \cig (1\wedge |e|)$ and $\lambda(.)$ integrates $(1\wedge |e|)_{e\in E}$.
\ms

We then have: 
\begin{theoreme} \label{thviscosite} Assume that Assumptions (H1)-(H4) are fulfilled. Then the bounded continuous functions $(u^i)_{\ii}$ of \eqref{repY1} 
are the unique viscosity solution of the system \eqref{sys1.1}.
\end{theoreme}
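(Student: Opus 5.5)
The plan is to treat existence and uniqueness separately, both by reduction to results where the generator has no jump argument.

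\textbf{Existence.} The key device is to freeze the nonlocal term using the limit functions themselves. Set
\[
\check F_i(t,x,\vec y,z):=\bar f_i\big(t,x,\vec y,z,B_iu^i(t,x)\big),
\]
where $(u^i)_{\ii}$ are the bounded continuous functions of Theorem \ref{existence2}. As in point a) of Step 1 of that proof, $(t,x)\mapsto B_iu^i(t,x)$ is continuous and bounded. This follows from dominated convergence, using $|\g_i|\le \cig(1\wedge|e|)$, the boundedness of $u^i$ and $\int_E(1\wedge|e|)\lambda(de)<\infty$. Hence $\check F_i$ satisfies (H1) (without jump variable) and \eqref{bornitude}.

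By \eqref{repV1}, the solution $(Y^{i,t,x},Z^{i,t,x},V^{i,t,x},K^{i,t,x})_{\ii}$ of \eqref{sys2.13} is also a solution of the system \eqref{sys2.14} with $\wfi=\check F_i$, after replacing $\xtx_{r-}$ by $\xtx_r$ inside $dr$. Since \eqref{sys2.14} has a unique solution, Theorem \ref{hz} applies to it, the monotonicity (H5) being void because there is no jump argument. That theorem says the Feynman-Kac functions of \eqref{sys2.14}, which coincide with $(u^i)_{\ii}$ by \eqref{repY13}, form a viscosity solution of the system \eqref{sys1.1} with generators $\check F_i$.

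It remains to observe that in Definition \ref{def4.1} the jump argument of $\bar f_i$ is $B_iu^i(t,x)$, not $B_i\phi(t,x)$. Therefore being a viscosity sub/supersolution for the generator $\check F_i$ is literally the same condition as for $\bar f_i$ with $B_iu^i$ plugged in. The terminal condition $u^i(T,x)=h_i(x)$ is immediate from \eqref{repY13} at $s=t=T$. One technical point needs checking: the viscosity definition used in \cite{hamadene2015viscosity} may be formulated with local extrema or with $\mathcal K$ split into small and large jumps. If so, I would note that for bounded $u^i$ and a global extremum these formulations are equivalent, which is standard.

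\textbf{Uniqueness.} Let $(\tilde u^i)_{\ii}$ be another bounded continuous viscosity solution of \eqref{sys1.1}. Again freeze the jump term, now with $\tilde u$: set $\tilde F_i(t,x,\vec y,z):=\bar f_i(t,x,\vec y,z,B_i\tilde u^i(t,x))$. By the same remark on Definition \ref{def4.1}, $(\tilde u^i)_{\ii}$ is a viscosity solution of the system with generators $\tilde F_i$, which have no jump dependence. By the uniqueness part of Theorem \ref{hz} (uniqueness in the polynomial growth class, which contains bounded functions), $\tilde u^i$ is the Feynman-Kac representation of the solution $(\tilde Y^{i,t,x},\tilde Z^{i,t,x},\tilde V^{i,t,x},\tilde K^{i,t,x})_{\ii}$ of \eqref{sys2.14} with $\wfi=\tilde F_i$. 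Moreover, Proposition \ref{repvi2} together with the remark following it gives
\[
\tilde V^{i,t,x}_s(e)=1_{\{s\ge t\}}\big(\tilde u^i(s,\xtx_{s-}+\beta(\xtx_{s-},e))-\tilde u^i(s,\xtx_{s-})\big).
\]
Hence $\int_E\tilde V^{i,t,x}_r(e)\g_i(\xtx_r,e)\lambda(de)=B_i\tilde u^i(r,\xtx_{r-})$, so this quadruple is a Markovian solution of \eqref{sys2.13} in the sense of the Definition preceding Theorem \ref{unic}. Theorem \ref{unic} then yields $\tilde u^i=u^i$.

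\textbf{Main obstacle.} The delicate point is the uniqueness step. It requires that the comparison/uniqueness result behind Theorem \ref{hz} genuinely holds for bounded continuous solutions when the generator depends on $(t,x)$ only continuously (via (H1)-i), not Lipschitz, in $x$), with $\lambda$ of infinite activity. I would check that this is within the hypotheses of \cite{hamadene2015viscosity}. If the uniqueness there needs more regularity in $x$, I would instead compare $\tilde u$ directly with the Snell-envelope/switching representation, using Proposition \ref{firstresult} on truncated measures and passing to the limit as in Proposition \ref{cvuniloc}.
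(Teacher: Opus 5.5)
Your proposal is correct and follows essentially the same route as the paper. For existence you freeze the nonlocal argument at $B_iu^i$, use \eqref{repV1} to see that the solution of \eqref{sys2.13} solves the resulting jump-free system, and apply \cite{hamadene2015viscosity}. For uniqueness you freeze at $B_i\tilde u^i$, identify $\tilde u^i$ with the Feynman--Kac functions of the frozen system via uniqueness of its viscosity solution, get the jump representation from Proposition \ref{repvi2}, and conclude with Theorem \ref{unic}. The paper does exactly this, writing the frozen systems \eqref{nvRBSDEpp3} and \eqref{eqBSDE2} explicitly, and it likewise just cites \cite{hamadene2015viscosity} for the point you flag as the main obstacle.
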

\proof We first show that $(u^i)_{\ii}$ of \eqref{repY13} are viscosity solution of \eqref{sys1.1}. So let us consider the following system of reflected BSDEs with jumps and interconnected obstacles: $\forall s\in [t,T]$ and $\ii$,
\begin{equation}\label{nvRBSDEpp3}
\begin{split}
\begin{cases}
\vspace{0.3cm} \underbar{Y}^{i,t,x}\in \ss,  \underbar{Z}^{i,t,x} \in \hdd, \underbar{V}^{i,t,x} \in \hld \mbox{ and } \underbar{K}^{i,t,x}\in \aa;\\ \vspace{0.3cm}
\underbar{Y}_s^{i,t,x} = h_i(X_T^{t,x})+ \int_s^T \bar f_i(r,X_r^{t,x},(\underbar{Y}_r^{k,t,x})_{k\in \mi},\underbar{Z}_r^{i,t,x}, \int_E \gamma_i(\xtx_r,e) \times \\\vspace{0.3cm}
 \quad \quad  \{u^{i}(r,\xtx_{r-}+ \beta(\xtx_{r-},e)) - u^{i}(r,\xtx_{r-})\} \lambda(de))dr+\underbar{K}_T^{i,t,x} - \underbar{K}_s^{i,t,x}\\\vspace{0.3cm} \quad -\int_s^T \underbar{Z}_r^{i,t,x}dB_r -\int_s^T \int_E \underbar{V}_r^{i,t,x}(e) \tilde{\mu}(dr,de);\\\vspace{0.3cm}
 \underbar{Y}_s^{i,t,x} \geqslant  \displaystyle \max_{j \in \mathcal{I}^{-i}}(\underbar{Y}_s^{j,t,x} -g_{ij}(s))\mbox{ and }\textstyle {\int_t^T (\underbar{Y}_s^{i,t,x} -\displaystyle \max_{j\in \mathcal{I}^{-i}}(\underbar{Y}_s^{j,t,x} -g_{ij}(s))) d\underbar{K}_s^{i,t,x} = 0.}
\end{cases}
\end{split}
\end{equation}
Thanks to the result in \cite{hamadene2015viscosity}, pp. 1745, this system admits a unique solution $(\underbar{Y}^{i,t,x}, \underbar{Z}^{i,t,x},\underbar{V}^{i,t,x},$
$\underbar{K}^{i,t,x})_{\ii}$. In fact, the generators of the system  \eqref{nvRBSDEpp3} do not depend on $\underbar{V}^{i,t,x}$.
Next by Theorem \ref{existence2}, there exist deterministic bounded continuous functions  $(\underbar{u}^{i})_{\ii}$, such that for any $(t,x)\in [0,T]\times \R^k,$
\begin{equation*}
  \forall s \in [t,T],\,\,\,\,\,\, \underbar{Y}_s^{i,t,x} = \underbar{u}^{i}(s,\xtx_s).
\end{equation*} 
Actually, mainly this due to the fact that for any $\ii$ the functions
$$(t,x,\vec y,z) \longmapsto \check F_i(t,x,\vec y,z)={\bar f_i}(t,x,\vec y,z,\textstyle \int_E \gamma_i(x,e)\{u^{i}(t,x+ \beta(x,e))  - u^{i}(t,x)\} \lambda(de))$$verify Assumption (H1) and 
$\check F_i(t,x,\vec 0,0)$ is bounded since $\bar f_i(.)$ verify (H1), $u^i$ is continuous bounded, $|\g_i(x,e)|\le \cig (1\wedge |e|)$ and $\lambda(.)$ integrates $(1\wedge |e|)_{e\in E}$. Next using the result by Hamad\`ene-Zhao \cite{hamadene2015viscosity}, pp.1745, we deduce that $(\underbar{u}^{i})_{\ii}$ is the unique viscosity solution of the following system:  For any $(t,x)\in [0,T] \times \R^k$,
\begin{equation}\label{eqnvpp3}
\begin{cases}
\min \lbrace \underbar{u}^i(t,x) - \displaystyle \max_{j \in {\mathcal{I}^{-i}}}(\underbar{u}^j(t,x)-g_{ij}(t)) ;  -\partial_t\underbar{u}^i(t,x) - \mathcal{L}\underbar{u}^i(t,x)- \mathcal{K}\underbar{u}^i(t,x)\\
\qquad - \bar f_i(t,x,(\underbar{u}^k(t,x))_{k=1,,...,m},(\sigma^\top D_x\underbar{u}^i)(t,x),\mathcal{B}_iu^i(t,x))\rbrace = 0;\\\\

\underbar{u}^i(T,x) = h_i(x).
\end{cases}
\end{equation}
Let us point out that, in this system \eqref{eqnvpp3}, the last component of $\bar{f}_i$ is   $\mathcal{B}_iu^i(t,x)$ and not $\mathcal{B}_i\underbar{u}^i(t,x)$.
Now, recall that $(Y^{i,t,x},Z^{i,t,x},V^{i,t,x},K^{i,t,x})_{\ii}$ solves the system  \eqref{sys2.13} and by Theorem \ref{existence2}, we know that for any $\tx$, $\ii$ and $s\in [t,T]$, 

$$\vspace{0.3cm}
V_s^{i,t,x}(e) =  u^{i}(s,\xtx_{s-}+ \beta(\xtx_{s-},e))- u^{i}(s,\xtx_{s-}),\,\, ds\otimes d\mathbb{P} \otimes d\lambda \mbox{ on } [t,T] \times \Omega \times E.
$$
Plug this relation in the second term of the right-hand side of the second equality of \eqref{sys2.13}, we obtain that
$(Y^{i,t,x},Z^{i,t,x},V^{i,t,x},K^{i,t,x})_{\ii}$ verify: for any $s\in [t,T]$ and $\ii$,
 \begin{equation}
\begin{split}
\begin{cases}
\vspace{0.15cm}
Y_s^{i,t,x} = h_i(X_T^{t,x}) +\int_s^T \bar f_i(r,X_r^{t,x},(Y_r^{k,t,x})_{k \in \mi},Z_r^{i,t,x},\int_E \gamma_i(\xtx_r,e)\times\\\vspace{0.15cm}
 \quad \quad \{u^{i}(r,\xtx_{r-}+ \beta(\xtx_{r-},e))   - u^{i}(r,\xtx_{r-})\}\lambda(de))dr+K_T^{i,t,x}- K_s^{i,t,x}\\\vspace{0.35cm}
 \qquad - \int_s^T Z_r^{i,t,x}dB_r -\int_s^T \int_E V_r^{i,t,x}(e) \tilde{\mu}(dr,de);\\\vspace{0.15cm}
Y_s^{i,t,x} \geqslant  \displaystyle \max_{j \in \mathcal{I}^{-i}}(Y_s^{,t,xj} -g_{ij}(s));\\\vspace{0.15cm}
\int_t^T (Y_s^{i,t,x} -\displaystyle \max_{j\in \mathcal{I}^{-i}}(Y_s^{j,t,x} -g_{ij}(s))) dK_s^{i,t,x} = 0.
\end{cases}
\end{split}
\end{equation}
Then, by uniqueness of the solution of the system  of reflected BSDEs   \eqref{nvRBSDEpp3}, we deduce that for any $s \in [t,T]$ and $\ii$, $\underbar{Y}^{i,t,x}_s = Y^{i,t,x}_s$. Then, for any  $\ii$, $ \underbar{u}^i = u^i$. Consequently, $(u^i)_{i \in \mi}$ is a viscosity solution of \eqref{sys1.1} according to Definition \ref{def4.1}. 
\ms
 
We now focus on the uniqueness of the viscosity solution of system \eqref{sys1.1}. 
\ms

So suppose that there exist bounded continuous functions denoted by $(\tilde u^i)_{i \in \mathcal{I}}$ viscosity solution of the system of IPDEs \eqref{sys1.1} according to Definition \ref{def4.1}.
Let us consider the following system of RBSDEs with jumps and interconnected obstacles : for any $i \in \mathcal{I}$ and $s\in [t,T]$,
\begin{equation}\label{eqBSDE2}
\begin{split}
\begin{cases}
\vspace{0.3cm} \bar{Y}^{i,t,x}\in \ss,  \bar {Z}^{i,t,x} \in \hdd, \bar{V}^{i,t,x} \in \hld, \mbox{ and } \bar{K}^{i,t,x}\in \aa;\\ \vspace{0.3cm}
\bar{Y}_s^{i,t,x} = h_i(X_T^{t,x})
+ \int_s^T \bar f_i(r,X_r^{t,x},(\bar{Y}_r^{k,t,x})_{k\in \mi},\bar{Z}_r^{i,t,x}, B_i \tilde u^i (r, \xtx_{r}))dr\\\vspace{0.3cm}
+\bar{K}_T^{i,t,x} - \bar{K}_s^{i,t,x}-\int_s^T \bar{Z}_r^{i,t,x}dB_r-\int_s^T \int_E \bar{V}_r^{i,t,x}(e) \bar{\mu}(dr,de);\\\vspace{0.3cm}
 \bar{Y}_s^{i,t,x} \geqslant  \displaystyle \max_{j \in \mathcal{I}^{-i}}(\bar{Y}_s^{j,t,x} -g_{ij}(s))\mbox{ and }\textstyle {\int_t^T (\bar{Y}_s^{i,t,x} -\displaystyle \max_{j\in \mathcal{I}^{-i}}(\bar{Y}_s^{j,t,x} -g_{ij}(s))) d\bar{K}_s^{i,t,x} = 0},
\end{cases}
\end{split}
\end{equation}
where we recall that $$\begin{array}{c}B_i\tilde u^i (r, \xtx_{r})=\int_E \gamma_i(X_r^{t,x},e) (\tilde{u}^{i}(r,\xtx_{r}+ \beta(\xtx_{r},e))- \tilde{u}^{i}(r,\xtx_{r})) \lambda(de).\end{array}$$
From Proposition 4.2 of Hamad\`ene-Zhao \cite{hamadene2015viscosity}, there exists a unique solution $(\bar Y^i,\bar Z^i, \bar V^i, \bar K^i)_{i\in {\cal I}}$ to the system of RBSDEs  \eqref{eqBSDE2} and a bounded continuous functions $(\bar {u}^i)_{i\in {\cal I}}$ such that
\begin{equation*}
 \forall s \in [t,T], \, \, \bar{Y}_s^{i,t,x} = \bar {u}^i(s,X_s^{t,x}).
\end{equation*}
The boundedness can be obtained as in the proof of Proposition \ref{firstresult}. Moreover $(\bar {u}^i)_{i\in {\cal I}}$ is the unique viscosity solution to the following system of IPDEs :
\begin{equation}\label{edpo1}
\begin{cases}
\min \lbrace  v^i(t,x) - \displaystyle \max_{j \in {\mathcal{I}^{-i}}}( v^j(t,x)-g_{ij}(t)) ;-\partial_t v^i(t,x) - \mathcal{L} v^i(t,x)- \mathcal{K} v^i(t,x)\\
  - f_i(t,x,( v^k(t,x))_{k=1,...,m},(\sigma^\top D_x v^i)(t,x), B_i \tilde u^i(t,x))\rbrace = 0, \,
 (t,x)\in [0,T) \times \R^k;\\\\
 v^i(T,x) = h_i(x),\, x\in \R^k.
\end{cases}
\end{equation}
Therefore by uniqueness, for any $\ii$, $\bar u^i=\tilde u^i$, and then, $B_i\bar u^i (t,x)=B_i\tilde u^i (t,x)$ and 
$$
\bar V^{i,t,x}=\bar u^{i}(s,\xtx_{s-}+\beta(\xtx_{s-},e))- \bar u^{i}(s,\xtx_{s-})=
\tilde u^{i}(s,\xtx_{s-}+\beta(\xtx_{s-},e))- \tilde u^{i}(s,\xtx_{s-})$$
since once more, as previously, 
$$
\bar F_i(t,x,\vec y,z):=
\bar f_i(t,x,\vec y,z,B_i\tilde u^i (t,x)), \ii,
$$
verify the assumptions of Theorem \ref{existence2}. Therefore 
$(\bar Y^{i,t,x}=\bar u^i(s,\xtx_s),\bar Z^{i,t,x},\bar V^{i,t,x}=
\bar u^{i}(s,\xtx_{s-}+\beta(\xtx_{s-},e))- \bar u^{i}(s,\xtx_{s-}),\bar K^{i,t,x})_{\ii}$ is a Markovian solution of \eqref{sys2.13}. Thus by uniqueness (see Theorem \ref{unic}) one has: For any $\ii$, 
$\forall s\in [t,T], \bar Y^{i,t,x}_s=Y^{i,t,x}_s=u^i(s,\xtx_s)=\bar u^i(s,\xtx_s)=\tilde u^i(s,\xtx_s).$
Finally in taking $s=t$ one deduces that for any $\ii$ and $\tx$, $u^i(t,x)=\bar u^i(t,x)=\tilde u^i(t,x)$ which means that the viscosity solution of \eqref{sys1.1} is unique. \qed
\section {Appendix}
\begin{lemme}\lb{lemmeccv} Let $f$ be  a function from $[0,T]\times \R^k\times \R^{\ell}\rw \R$. Assume that the function $(t,x)\in [0,T]\times \R^k \mapsto f(t,x,w)\in \R$ is continuous, uniformly w.r.t. $w$, i.e., for any $(t,x)\in \espo$, for any 
$\eps >0$ there exists $\eta_{t,x,\eps}>0$ such that if $|(t',x')-(t,x)|<\eta_{t,x,\eps}$ then 
$|f(t',x',w)-f(t,x,w)|<\eps$. Then 
for any $R>0$, there exists a non-decreasing concave function $\Phi_R: \R^+\rw \R^+$ such that $\Phi_R(0)=0$ and for any $t,t'\in [0,T]$, $x,x'\in B'(0,R)$, $w\in \R^{\ell}$, 
\begin{equation} \lb{modcontccv}
|f(t,x,w)-f(t',x',w)|\le \Phi_R(|t-t'|+|x-x'|).
\end{equation}
\end{lemme}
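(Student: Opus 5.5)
The plan is to build $\Phi_R$ from the uniform-in-$w$ modulus of continuity of $f$ on the compact set $K_R:=[0,T]\times \bar B(0,R)$, and then take its least concave majorant. For $\rho\ge 0$ we set
$$
\omega_R(\rho):=\sup\{|f(t,x,w)-f(t',x',w)|:\ (t,x),(t',x')\in K_R,\ |t-t'|+|x-x'|\le \rho,\ w\in\R^\ell\}.
$$
By construction $\omega_R$ is non-decreasing, $\omega_R(0)=0$, and \eqref{modcontccv} holds with $\omega_R$ in place of $\Phi_R$. It remains to show that $\omega_R$ is finite and continuous at $0$, and then to concavify.

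\textbf{Continuity at $0$.} First we turn the pointwise hypothesis into a uniform one by a standard compactness argument. Fix $\eps>0$. For each $(t,x)\in K_R$ let $\eta_{t,x}:=\eta_{t,x,\eps/2}$. The balls of radius $\eta_{t,x}/2$ cover $K_R$, so finitely many of them, centered at $(t_j,x_j)$, suffice. Set $\delta:=\min_j \eta_{t_j,x_j}/2$.

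Take two points of $K_R$ at distance less than $\delta$. The first lies within $\eta_{t_j,x_j}/2$ of some center $(t_j,x_j)$, so both points lie within $\eta_{t_j,x_j}$ of that center. Applying the hypothesis twice through $(t_j,x_j)$ gives $|f(t,x,w)-f(t',x',w)|<\eps$ for all $w$. The Euclidean distance and $|t-t'|+|x-x'|$ are equivalent, so this shows $\omega_R(\rho)\le\eps$ for $\rho$ small. Hence $\omega_R(\rho)\to0$ as $\rho\to0$.

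\textbf{Finiteness.} $K_R$ has finite diameter $D$, so it suffices to bound $\omega_R(D)$. Any two points of $K_R$ can be joined by a chain of $N\approx D/\delta$ steps of size less than $\delta$ inside the convex set $K_R$. This gives $\omega_R(\rho)\le N\eps<\infty$ for every $\rho$; in particular $\omega_R$ is bounded by some constant $M$. The same chaining gives subadditivity: $\omega_R(\rho_1+\rho_2)\le\omega_R(\rho_1)+\omega_R(\rho_2)$.

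\textbf{Concavification.} We now define
$$
\Phi_R(\rho):=\inf\{a+b\rho:\ a,b\ge0,\ a+b s\ge \omega_R(s)\ \forall s\ge0\},
$$
the least concave majorant of $\omega_R$. As an infimum of non-decreasing affine functions it is concave, non-decreasing and $\ge\omega_R$, so \eqref{modcontccv} holds with $\Phi_R$.

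The key point is $\Phi_R(0)=0$. Given $\eps>0$, choose $\delta$ with $\omega_R\le\eps$ on $[0,\delta]$. Then $\omega_R(s)\le \eps+(M/\delta)s$ for all $s$: on $[0,\delta]$ this follows from the choice of $\delta$, and for $s>\delta$ the right-hand side exceeds $M$. Hence $\Phi_R(0)\le\eps$ for every $\eps$, so $\Phi_R(0)=0$. Continuity of $\Phi_R$ on $(0,\infty)$ follows from concavity and finiteness, and continuity at $0$ follows from $\Phi_R(\rho)\le\eps+(M/\delta)\rho$.

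The main obstacle is the first step, upgrading the pointwise continuity (uniform only in $w$) to a uniform modulus on $K_R$. The compactness argument with half-radius balls handles it. A bounded, continuous-at-zero modulus is then all the concavification needs, so no linear-growth argument is required.
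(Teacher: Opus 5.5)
Your proposal is correct and follows essentially the same route as the paper. Both arguments take the uniform-in-$w$ modulus $\omega_R$ (the paper's $\Psi_R$), prove continuity at $0$ by the half-radius compactness argument, obtain finiteness by chaining/subadditivity, and then pass to a concave non-decreasing majorant. The only difference is that you check directly that the majorant vanishes at $0$, via the affine bound $\omega_R(s)\le \eps+(M/\delta)s$, whereas the paper cites an external lemma for the smallest concave majorant and does not verify $\Phi_R(0)=0$ explicitly.
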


\nd $\proof$ Let $R>0$, $(t,x)\in [0,T]\times B'(0,R)$ and $\eps>0$. By definition, there exists 
$\eta_{t,x,\eps}>0$ such that if $(t',x')\in B((t,x),\eta_{t,x,\eps})$ (the open ball in $\espo$ with center $(t,x)$ and radius $\eta_{t,x,\eps}$) then 
$$|f(t',x',w)-f(t,x,w)|<\eps.$$
As $$[0,T]\times B'(0,R)\subset \bigcup_{(t,x)\in [0,T]\times B'(0,R)}B((t,x),\frac{\eta_{t,x,\eps}}{2})
$$
then by compacity one can find finitely many points $(t_1,x_1), ...., (t_m,x_m)$ such that 
$$[0,T]\times B'(0,R)\subset \bigcup_{i=1,m}B((t_i,x_i),\frac{\eta_{t_i,x_i,\eps}}{2}).$$
\ms

\nd Next let $R>0$ and $\Psi_R$ defined as follows: $$\begin{array}{ll}
\forall \g \geq 0, \Psi_R(\g):=&\!\!\!\!\!\sup\{|f(t,x,w)-f(t',x',w)|,\,|t-t'|+|x-x'|\leq \g,\\ &\qq\qq \qq\qq t,t'\in [0,T], x,x'\in B'(0,R),w\in \R^{\ell }\}.
\end{array}$$
Then:

\nd (i) First note that the function $\gamma \in \R^+\mapsto \Psi_R(\g)$ is non-decreasing and $\Psi_R(0)=0$. Next let us set $\eta=\min_{i=1,m}\frac{\eta_{t_i,x_i,\eps}}{3}$. Then $\eta>0$ and we claim that $\Psi_R(\eta)\in \R^+$. 

Indeed let $w\in \R^\ell$ and $(t,x)$, $(t',x')$ $\in [0,T]\times B'(0,R)$ such that 
$|t-t'|+|x-x'|\leq \eta$. Then there exists $i\in\{1,...,m\}$ such that $(t,x)\in B((t_i,x_i),\frac{\eta_{t_i,x_i,\eps}}{2})$. It follows that 
$$
|(t',x')-(t_i,x_i)|\leq |(t',x')-(t,x)|+|(t,x)-(t_i,x_i)|\leq \eta +\frac{\eta_{t_i,x_i,\eps}}{2}<\eta_{t_i,x_i,\eps}
$$
which implies that $(t',x')$ belongs also to $B((t_i,x_i),\eta_{t_i,x_i,\eps})$. Then by continuity we have:  
$$
|f(t,x,w)-f(t',x',w)|\leq |f(t,x,w)-f(t_i,x_i,w)|+|f(t_i,x_i,w)-f(t',x',w)|\leq 2\eps. 
$$
Taking the supremum to obtain that $\Psi_R(\eta)\leq 2\eps$. 
\ms

\nd (ii) As $\Psi_R$ is non-decreasing then $\Psi_R(\gamma)\leq 2\eps $ for any $\gamma\leq \eta$. Note that this property implies also that $\Psi_R(\gamma)\rw 0=\Psi_R(0)$ as $\gamma \rw 0$. 
\ms

\nd (iii) For any $\g_1$ and $\g_2$ in $\R^+$, $\Psi_R(\g_1+\g_2)\leq  \Psi_R(\g_1)+\Psi_R(\g_2)$.

Indeed let $w\in \R^\ell$ and $(t,x)$, $(t',x')$ elements of $[0,T]\times B'(0,R)$ such that 
$|t-t'|+|x-x'|\leq \g_1+\g_2$. Then there exists $(\bar t,\bar x)\in [0,T]\times B'(0,R)$ such that 
$
|(t,x)-(\bar t,\bar x)|\leq \g_1$ and $|(\bar t,\bar x)-(t',x')|\leq \g_2$. Therefore 
$$
|f(t,x,w)-f(t',x',w)|\leq |f(t,x,w)-f(\bar t,\bar x,w)|+|f(\bar t,\bar x,w)-f(t',x',w)|\leq  \Psi_R(\g_1)+\Psi_R(\g_2)
$$
which implies that $\Psi_R(\g_1+\g_2)\leq  \Psi_R(\g_1)+\Psi_R(\g_2)$.
\ms 

\nd (iv) For any $\gamma \in \R^+$, $\Psi_R(\gamma)\in \R^+$ and $\Psi_R(.)$ verifies (\ref{modcontccv}). 

Indeed by induction and (iii), for any $\gamma \in \R^+$ and $n\ge 1$, $\Psi_R(n\g)\leq n\Psi_R(\gamma)$. On the other hand, by (ii) and (iii), for any $\g\in \R^+$ one can find an integer $n$ such that $\Psi_R(\g)\leq \Psi_R(n\eta)\le n\Psi_R(\eta)$ which implies that $\Psi_R(\gamma)\in \R^+$. 
Now it is enough to take $\Phi_R$, for any $R>0$, the smallest concave non-decreasing function which majorizes the function $\Psi_R$ which exists (see e.g. \cite{sb}, Lemma 11). Finally 
(\ref{modcontccv}) is obviously satisfied and $$
|f(t,x,w)-f(t,x',w)|\leq
\Phi_R(|x-x'|)$$\ for all $t\in (0,T)$, $|x|$, $|x'|$,
$w\in \R^\ell$.  \qed  
\bs 

\nd {\bf Proof of Lemma \ref{truncation}}: We prove only \eqref{convergenceXn} and  \eqref{cvgence2} as \eqref{estimXn} is classical (see e.g. \cite{{fujiwara1985stochastic}}, Lemma 2.1). Let $p\ge 1$ and $m\ge n$. For any $s\in [0,T]$, we have:
 \begin{align*}
     {}^{n}\!X_s^{t,x}-{}^{m}\!X_s^{t,x} =& \int_0^s (b(r,{}^{n}\!X_r^{t,x})- b(r,{}^{m}\!X_r^{t,x}))dr + \int_0^s (\sigma(r,{}^{n}\!X_r^{t,x})- \sigma(r,{}^{m}\!X_r^{t,x}))dB_r \\[4pt]
     & \qquad \int_0^s \int_E (\beta(e,{}^{n}\!X_{r_-}^{t,x})\mathbf{1}_{\lbrace|e|\geq \frac{1}{n}\rbrace} - \beta(e,{}^{m}\!X_{r_-}^{t,x}))\tilde{\mu}_m(dr,de)
 \end{align*}
 Since $|a+b+c|^{2p} \leq 3^{2p-1}(|a|^{2p}+|b|^{2p}+|c|^{2p})$, for any $p\geq 1$ and $a$, $b$, $c\in \R$, then we have:
 \begin{equation*}
     \begin{aligned}
         &\E \Big[ \sup_{0\leq s\leq \eta}\big|{}^{n}\!\xtx_s - {}^{m}\!\xtx_s\big|^{2p}\Big] \\[4pt]
&  \leq 3^{2p-1} \E\Big[  \sup_{0\leq s\leq \eta}\big|\int_0^{s}  b(r, {}^{n}\!\xtx_{r}) - b(r, {}^{m}\!\xtx_{r} )dr\big|^{2p}  +\sup_{0\leq s\leq \eta}\big|\int_0^s \sigma(r, {}^{n}\!\xtx_{r}) - \sigma(r, {}^{m}\!\xtx_{r})  dB_r\big|^{2p}\\[4pt]
& \qquad +  \sup_{0\leq s\leq \eta} \big| \int_0^s \int_E (\beta(e,{}^{n}\!X_{r_-}^{t,x})\mathbf{1}_{\lbrace|e|\geq \frac{1}{n}\rbrace} - \beta(e,{}^{m}\!X_{r_-}^{t,x}))\tilde{\mu}_m(dr,de)\big|^{2p} \Big].
     \end{aligned}
 \end{equation*}
Next by the Cauchy-Schwarz, B-D-G and generalized B-D-G inequalities (see \cite{jacka-hernandez} for this latter) we have: $\forall \eta \in [0,T]$,
 \begin{equation}
     \begin{aligned}
         &\E \Big[ \sup_{0\leq s\leq \eta}\big|{}^{n}\!\xtx_s - {}^{m}\!\xtx_s\big|^{2p}\Big] \\[4pt]
& \leq C\,\Big\{ \E \Big[ \int_0^{\eta} \sup_{0\leq \tau\leq r} \big| b(\tau, {}^{n}\!\xtx_{\tau}) - b(\tau, {}^{m}\!\xtx_{\tau})\big|^{2p}dr\Big]+ \E \Big[ \Big(\int_0^{\eta}\sup_{0\leq \tau\leq r} \big| \sigma(\tau, {}^{n}\!\xtx_{\tau}) - \sigma(\tau, {}^{m}\!\xtx_{\tau})\big|^{2}dr\Big)^{p}\Big]\\[4pt]
&  \qq+  \E \Big[ \Big( \int_0^{\eta} \int_E \big|\beta(e,{}^{n}\!X_{r-}^{t,x})\mathbf{1}_{\lbrace|e|\geq \frac{1}{n}\rbrace} - \beta(e,{}^{m}\!X_{r-}^{t,x}))\big|^{2} \lambda_m(de)dr\Big)^{p}\Big] \\[4pt]
&  \qq+ \E \Big[ \int_0^{\eta} \int_{E}  \big|\beta(e,{}^{n}\!X_{r-}^{t,x})\mathbf{1}_{\lbrace|e|\geq \frac{1}{n}\rbrace}- \beta(e,{}^{m}\!X_{r-}^{t,x})\big|^{2p} \lambda_m(de)dr\Big]\Big\}=:C\{\Sigma_1+\Sigma_2+\Sigma_3+\Sigma_4\}
     \end{aligned}\lb{eqlemme31}
 \end{equation}
where $C$ is a constant which depends on $p$ and which may change from line to line. As $b$ is Lipschitz w.r.t $x$, then  we have:  For any $\eta \leq T$,
\begin{align*}
  \Sigma_1
\leq  C\E \Big[ \int_0^{\eta}\sup_{0\leq \tau \leq r} \big|{}^{n}\!\xtx_{\tau} - {}^{m}\!\xtx_{\tau}\big|^{2p}d\tau\Big].
\end{align*}
Besides using Jensen'inequality and the fact that $\sigma$ is Lipschitz w.r.t $x$, we get: For any $\eta \leq T$,
\begin{align*}
   \Sigma_2\leq C\E\Big[ \int_0^{\eta}\sup_{0\leq \tau\leq r} \big| \sigma(\tau, {}^{n}\!\xtx_{\tau}) - \sigma(\tau, {}^{m}\!\xtx_{\tau})\big|^{2p}d\tau\Big]\leq C\E\Big[ \int_0^{\eta}\sup_{0\leq \tau\leq r} \big|{}^{n}\!\xtx_{\tau} - {}^{m}\!\xtx_{\tau}\big|^{2p} d\tau\Big].
\end{align*}
Next using the fact that  $\beta$ verifies \eqref{hyp3}, the inequality $(|x|+|y|)^p\leq C(|x|^p+|y|^p)$ $(x,y\in \R)$ and Jensen's one to deduce, 
\begin{align*}
 \Sigma_3&:=\E\Big[\Big( \int_0^{\eta} \int_E \big|\beta(e,{}^{n}\!X_{r-}^{t,x})\mathbf{1}_{\lbrace|e|\geq \frac{1}{n}\rbrace}- \beta(e,{}^{m}\!X_{r-}^{t,x}))\big|^{2} \lambda_m(de)dr\Big)^{p}\Big] \\& 
 =\E\Big[\Big( \int_0^{\eta} \int_{|e|\ge \frac{1}{n}} \big|\beta(e,{}^{n}\!X_{r-}^{t,x})- \beta(e,{}^{m}\!X_{r-}^{t,x}))\big|^{2} \lambda(de)dr+
 \int_{\frac{1}{m}\le |e|<\frac{1}{n}} \big|\beta(e,{}^{m}\!X_{r-}^{t,x}))\big|^{2} \lambda(de)dr
 \Big)^{p}\Big] \\ 
&\leq C
\E\Big[\Big( \int_0^{\eta} \int_{|e|\ge \frac{1}{n}} \big|\beta(e,{}^{n}\!X_{\tau_-}^{t,x})- \beta(e,{}^{m}\!X_{\tau_-}^{t,x}))\big|^{2} \lambda(de)dr\Big)^p+\Big(
 \int_{\frac{1}{m}\le |e|<\frac{1}{n}} \big|\beta(e,{}^{m}\!X_{\tau_-}^{t,x}))\big|^{2} \lambda(de)dr
 \Big)^{p}\Big] 
 \\[4pt]
 & \quad \leq  C \E\Big[ \Big( \int_0^{\eta}\sup_{0\leq \tau \leq r} \big|{}^{n}\!\xtx_{\tau} - {}^{m}\!\xtx_{\tau}\big|^2 \int_E (1\wedge |e|)^2 \lambda(de)dr\Big)^{p}\Big]+C\Big(\int_{\lbrace\frac{1}{m} \leq |e| \leq  \frac{1}{n}\rbrace} (1\wedge|e|^2) \lambda(de)\Big)^{p}\\[4pt]
& \quad \leq  C \E\Big[ \int_0^{\eta}dr\sup_{0\leq \tau \leq r} \big|{}^{n}\!\xtx_{\tau} - {}^{m}\!\xtx_{\tau}\big|^{2p}\Big]+C\Big(\int_{\lbrace\frac{1}{m} \leq |e| <  \frac{1}{n}\rbrace} (1\wedge|e|^2) \lambda(de)\Big)^{p}.
 \end{align*}
The last inequality stems from the fact that $\lambda_n(.)$ is $\lambda(.)$ truncated and this latter integrates $(1\wedge |e|^2)_{e\in E}$. Finally, once more by \eqref{hyp3}, we have
\begin{align*}
 \Sigma_4&:=\E\Big[ \int_0^{\eta} \int_E \big|\beta(e,{}^{n}\!X_{r-}^{t,x})\mathbf{1}_{\lbrace|e|\geq \frac{1}{n}\rbrace}- \beta(e,{}^{m}\!X_{r-}^{t,x}))\big|^{2p} \lambda_m(de)dr\Big] \\& 
 =\E\Big[ \int_0^{\eta} \int_{|e|\ge \frac{1}{n}} \big|\beta(e,{}^{n}\!X_{r-}^{t,x})- \beta(e,{}^{m}\!X_{r-}^{t,x}))\big|^{2p} \lambda(de)dr+
 \int_{\frac{1}{m}\le |e|<\frac{1}{n}} \big|\beta(e,{}^{m}\!X_{\tau_-}^{t,x}))\big|^{2p} \lambda(de)dr
 \Big] \\[4pt]
 & \quad \leq  C \E\Big[\int_0^{\eta}dr\sup_{0\leq \tau \leq r} \big|{}^{n}\!\xtx_{\tau} - {}^{m}\!\xtx_{\tau}\big|^{2p} \int_E (1\wedge |e|)^{2p} \lambda(de)\Big]+C\int_{\lbrace\frac{1}{m} \leq |e| <  \frac{1}{n}\rbrace} (1\wedge|e|^{2p}) \lambda(de)\\[4pt]
& \quad \leq  C \E\Big[  \int_0^{\eta}dr\sup_{0\leq \tau \leq r} \big|{}^{n}\!\xtx_{\tau} - {}^{m}\!\xtx_{\tau}\big|^{2p}\Big]+C\int_{\lbrace\frac{1}{m} \leq |e| <  \frac{1}{n}\rbrace} (1\wedge|e|^{2p}) \lambda(de).
 \end{align*}
Plug now those four last inequalities in \eqref{eqlemme31} to obtain: $ \forall \,  \eta \leq T$,
\begin{align*}
&\E \Big[ \sup_{0\leq s\leq \eta}\big|{}^{n}\!\xtx_s - {}^{m}\!\xtx_s\big|^{2p}\Big] \\[4pt]
& \quad \leq C_p \E \Big[ \int_0^{\eta} dr\sup_{0\leq \tau \leq r} \big|{}^{n}\!\xtx_{\tau} - {}^{m}\!\xtx_{\tau}\big|^{2p}  + \Big(\int_{\lbrace\frac{1}{m} \leq |e| <  \frac{1}{n}\rbrace} (1\wedge|e|^2) \lambda(de)\Big)^{p} +\int_{\lbrace\frac{1}{m} \leq |e| <  \frac{1}{n}\rbrace} (1\wedge|e|^{2p}) \lambda(de)\Big].
\end{align*}
Finally, using Gronwall's inequality we obtain: For any $\eta \in [0,T]$, 
\begin{align}\label{estilmn}
\E \Big[ \sup_{0\leq s\leq \eta}\big|{}^{n}\!\xtx_s - {}^{m}\!\xtx_s\big|^{2p}\Big]  \leq C \Big\{\underbrace{\Big(\int_{\lbrace\frac{1}{m} \leq |e| <  \frac{1}{n}\rbrace} (1\wedge|e|^2) \lambda(de) \Big)^{p}+\int_{\lbrace\frac{1}{m} \leq |e| < \frac{1}{n}\rbrace} (1\wedge|e|^{2p}) \lambda(de)}_{\Lambda^{n,m}_p}\Big \}.
\end{align}
Then in taking $\eta=T$, we obtain the desired result.
\ms

Concerning the property \eqref{cvgence2}, it is enough to consider ${}^{n}\!\xtx - \xtx$ and by the same procedure as previously we obtain: For any $p\ge 1$, 
\begin{align}\label{estilmn}
\E \Big[ \sup_{0\leq s\leq T}\big|{}^{n}\!\xtx_s -\xtx_s\big|^{2p}\Big]  \leq C \Big\{\underbrace{\Big(\int_{\lbrace 0<|e| \leq  \frac{1}{n}\rbrace} (1\wedge|e|^2) \lambda(de) \Big)^{p}+\int_{\lbrace0< |e| \leq  \frac{1}{n}\rbrace} (1\wedge|e|^{2p}) \lambda(de)}_{\Lambda_{n}(p)}\Big \}.
\end{align}
The result follows since $\Lambda_n(p) \rw 0$ as $n\rw +\infty$.

\nd \begin{lemme}\lb{estima}Let $\d^*=(\tau_i^*,\xi_i^*)_{i\ge 0}$ be the optimal strategy for the switching problem associated with $((f_i(t,X^n_t,.))_{\ii},(g_{ij}(t)))_{\ij},(h_i(X^n_T)))_{\ii})$ where 
$(f_i)_{\ii}$, $(h_i)_{\ii}$ and $(g_{ij})_{\ij}$ satisfy the assumptions of Theorem \ref{firstresult}. Then for any $q\ge 1$, there exists positive constant $C_q$, which does not depend on $n$, such that:
\begin{equation} \lb{apdix2}\E[(A_T^{\delta^*})^q]\le C_q.
\end{equation}
\end{lemme}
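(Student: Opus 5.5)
We follow the classical argument for moment bounds on the cumulative switching cost of an optimal strategy, as in \cite{hamadene2015systems,chassagneux2011note}. The point is to make every constant depend only on the bounds of $h_i$, $f_i(t,x,\vec 0,0)$, the Lipschitz constants, $T$ and the costs $(g_{ij})$, and never on $n$.

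The first step is to use the verification structure of the optimal strategy $\d^*$. By construction, $\tau^*_{k+1}$ is the first time after $\tau^*_k$ at which ${}^n\!Y^{\xi^*_k}$ hits its obstacle, and at that time ${}^n\!Y^{\xi^*_k}_{\tau^*_{k+1}}={}^n\!Y^{\xi^*_{k+1}}_{\tau^*_{k+1}}-g_{\xi^*_k\xi^*_{k+1}}(\tau^*_{k+1})$. On each interval $[\tau^*_k,\tau^*_{k+1})$ the process $K^{\xi^*_k}$ does not increase. Summing the BSDE dynamics of \eqref{BSDEtranqué} along the switched mode then gives, for $s\in[t,T]$,
\[
A^{\d^*}_T-A^{\d^*}_s = h^{\d^*}({}^n\!X_T)-{}^n\!Y^{\eta^*_s}_s+\int_s^T \widehat f^{\d^*}(r,{}^n\!X_r,\yns,{}^n\!Z^{\eta^*_r}_r)dr-\int_s^T {}^n\!Z^{\eta^*_r}_rdB_r-\int_s^T\!\!\int_E {}^n\!V^{\eta^*_r}_r(e)\tilde\mu_n(dr,de).
\]
By Proposition \ref{firstresult}, the processes ${}^n\!Y$ are bounded uniformly in $n$. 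Together with the bounds on $h$ and on $\widehat f_i(\cdot,\vec 0,0)$, this reduces the problem to bounding $\E[(\int_t^T|{}^n\!Z_r|^2dr)^{q/2}]$ and $\E[(\int\!\!\int|{}^n\!V|^2\lambda_n(de)dr)^{q/2}]$ uniformly in $n$, by BDG.

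The second step is to obtain these uniform $L^q$ bounds on $({}^n\!Z,{}^n\!V)$, and this is where the real difficulty lies. Step 2 of the proof of Proposition \ref{repvi2} only gives them for $q=1$. We apply It\^o's formula to $|{}^n\!Y^i|^2$ between $s$ and $T$ and move the term $\int_s^T{}^n\!Y^i d{}^n\!K^i$ to the left. Raising to the power $q/2$ and using BDG produces terms of the form $\E[({}^n\!K^i_T)^{q}]$ on the right with a small coefficient, and $K$ itself is controlled by $Y$, $Z$, $V$ as in \eqref{estimZ1}. The bound on $V$ needs care: the quadratic variation $\int\!\!\int|V|^2\mu_n$ has to be compared with its compensator through the generalized BDG inequality of \cite{jacka-hernandez}, and since $|{}^n\!V|\le 2\sup|u^{i,n}|\le C$, all its moments are handled by the bound $\E[(\int\!\!\int|V|^2\mu_n)^{p}]\le C_p\E[(\int\!\!\int|V|^2\lambda_n)^p]+C_p$. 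This circularity between $K$ and $(Z,V)$ is the main obstacle. I would close it by working directly with the switching identity above, using $A^{\d^*}$ in place of $K$, combined with a classical a priori estimate for the $L^q$ norms of $(Z,V)$ in terms of $\sup|Y|$ and $K_T$.

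To break the circularity, I would instead use the structure of switching costs. Introduce the number of switches $N^*=\#\{k\ge1:\tau^*_k\le T\}$. The non free loop property (H2) and the continuity of the costs give $\underline g:=\min_{t}\min$ over loops of the loop costs, which is strictly positive. Hence $A^{\d^*}_T\ge \underline g\,\lfloor N^*/m\rfloor$. Conversely, $A^{\d^*}_T\le \max g\cdot N^*$. We then bound the conditional tail $\p[A^{\d^*}_T-A^{\d^*}_\tau>a\mid\mathcal F_\tau]$ using the displayed identity on the remaining interval. Its conditional expectation is at most $2\|Y\|_\infty+C$, thanks to the linearization of $\widehat f$ in $z$ and a Girsanov change of measure with bounded density depending only on $C^z_{\hat f}$. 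This gives $\E[A^{\d^*}_T-A^{\d^*}_\tau\mid\mathcal F_\tau]\le C$ uniformly in stopping times $\tau$ and in $n$. A Garsia--Neveu type lemma for increasing processes with bounded conditional potential then yields $\E[(A^{\d^*}_T)^q]\le q!\,C^q$. This last route avoids the $q$-moments of $Z$ and $V$ altogether, and it is the one I would ultimately use. The only delicate point is checking that the Girsanov density is uniformly bounded in $L^r$ independently of $n$, which holds because $C^z_{\hat f}$ does not depend on $n$.
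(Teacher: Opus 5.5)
Your final route is correct up to the repairs listed below, but it is not the paper's.

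\textbf{How the paper argues.} It never uses conditional potentials. It first proves the uniform bound \eqref{estilemanx} on $\mathbb{E}[(\int_0^T|{}^nZ_r|^2dr)^q+(\int_0^T\!\int_E|{}^nV_r(e)|^2\mu_n(dr,de))^q]$. This comes from It\^o's formula on $({}^nY^i)^2$, in which $K_T-K_s$ is \emph{replaced by its expression from the equation}. After raising to the power $q$ and applying BDG, $(Z,V)$ reappear only through stochastic integrals of order $q/2$, and these are absorbed by Cauchy--Schwarz and localization. So the ``circularity'' you identify in your second step is not a real obstruction; your It\^o/BDG attempt would close in exactly this way. The paper then writes the switching identity up to the $\ell$-th switch and dominates it by $\sup|Y|$, $\sup|h|$, $\int|\hat f|$ and the suprema of the two martingales. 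It concludes with BDG and \eqref{estilemanx}, letting $\ell\to\infty$ by monotone convergence.

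\textbf{What each route buys.} You replace all $q$-th moment estimates on $(Z,V)$ by one first-order statement, a uniform bound on the conditional potential of $A^{\delta^*}$. Garsia--Neveu then delivers every moment, indeed exponential moments, with constants $q!\,C^q$. This is shorter and avoids BDG for the jump integral. The paper's route yields \eqref{estilemanx} as a by-product. That estimate is what is reused, in the analogous form \eqref{estimhdnm}, to control $\Xi(\tau)$ in the proof of Proposition \ref{cvuniloc}, and your argument would not supply it.

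\textbf{Three points need fixing.}

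First, Girsanov gives $\mathbb{E}^Q[A_T-A_\tau\mid\mathcal{F}_\tau]\le C$ under $Q$, not under $\mathbb{P}$. Also, the density $\mathcal{E}(\int b\,dB)_T$ is not bounded; it is only in every $L^r$. You can either apply Garsia--Neveu under $Q$ and return by H\"older,
$\mathbb{E}[A_T^q]\le(\mathbb{E}^Q[A_T^{2q}])^{1/2}(\mathbb{E}[(dQ/d\mathbb{P})^{-1}])^{1/2}$, where the last factor is at most $e^{(C^z_{\hat f})^2T}$. Or you can drop Girsanov altogether. It\^o's formula on $({}^nY^i)^2$ between $\tau$ and $T$, together with $\mathbb{E}[K^i_T-K^i_\tau\mid\mathcal{F}_\tau]\le C+C\,\mathbb{E}[\int_\tau^T|{}^nZ^i_r|dr\mid\mathcal{F}_\tau]$, gives $\mathbb{E}[\int_\tau^T|{}^nZ^i_r|^2dr\mid\mathcal{F}_\tau]\le C$ uniformly in $n$ and $\tau$. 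That yields the potential bound directly under $\mathbb{P}$.

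Second, since $A^{\delta^*}$ jumps, Garsia--Neveu requires $\mathbb{E}[A_T-A_{\tau-}\mid\mathcal{F}_\tau]\le C$. This holds: telescoping ${}^nY^{\xi^*_k}_\tau={}^nY^{\xi^*_{k+1}}_\tau-g_{\xi^*_k\xi^*_{k+1}}(\tau)$ over the switches made at $\tau$ gives $\Delta A_\tau\le 2\sup|{}^nY|$.

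Third, your bound $A_T\ge\underline g\lfloor N^*/m\rfloor$ is unjustified. (H2) is a fixed-time condition and the $g_{ij}$ may vanish, so a loop closed at different times can cost nothing. For example, take $T=1$, $g_{12}(s)=(s-\frac{1}{3})^+$, $g_{21}(s)=(\frac{2}{3}-s)^+$, and switch $1\to2$ at $\frac{1}{4}$ and back at $\frac{3}{4}$. Fortunately your final argument never uses this bound.
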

\begin{proof}Recall that the construction of $\d^*=(\tau_i^*,\xi_i^*)_{i\ge 0}$ (with 
$\tau_0^*=0$ and $\xi_0^*=i$) stems from the solution $(Y^{i,n},Z^{i,n},V^{i,n},K^{i,n})_{\ii}$ of the system of reflected BSDEs (one can see \cite{hamadene2015systems}, pp.1652 for more details): $\forall i = 1,...,m$ and  $s \in [0,T]$,
 \begin{equation}
 \label{BSDEtronquebis}
\begin{split}
\begin{cases}
\vspace{0.3cm} {}^{n}\!\underbar Y^{i,t,x}\in \ss,  {}^{n}\!\underbar Z^{i,t,x} \in \hdd, {}^{n}\!\underbar V^{i,t,x} \in \hln, \mbox{ and } {}^{n}\!\underbar K^{i,t,x}\in \aa;\\ \vspace{0.3cm}
{}^{n}\!\underbar Y_s^{i,t,x} = h_i({}^{n}\!X_T^{t,x})+ \int_s^T  \widehat{f}_i(r,{}^{n}\!X_r^{t,x},({}^{n}\!Y_r^{j,t,x})_{j\in \mi},{}^{n}\!\underbar Z_r^{i,t,x})dr +{}^{n}\!\underbar K_T^{i,t,x} - {}^{n}\!\underbar K_s^{i,t,x} \\\vspace{0.3cm}
 \qquad \qquad   -\int_s^T {}^{n}\!\underbar Z_r^{i,t,x}dB_r -\int_s^T \int_E {}^{n}\!\underbar V_r^{i,t,x}(e) \tilde{\mu}_n(dr,de);\\\vspace{0.3cm}
 {}^{n}\!\underbar Y_s^{i,t,x} \geqslant  \displaystyle \max_{j \in \mathcal{I}^{-i}}({}^{n}\!\underbar Y_s^{j,t,x} -g_{ij}(s));\\\vspace{0.3cm}
 \textstyle {\int_0^T ({}^{n}\!\underbar Y_s^{i,t,x} -\displaystyle \max_{j\in \mathcal{I}^{-i}}({}^{n}\!\underbar Y_s^{j,t,x} -g_{ij}(s))) d{}^{n}\!\underbar K_s^{i,t,x} = 0}.
\end{cases}
\end{split}
\end{equation}
We are going first to show that for any $q\ge 1$ there exists a constant $C_q$ which depend only on the constants of boundedness of $h_i$, $g_{ij}$ and $\widehat f^i(t,x,\vec 0,0)$, $\ij$, and $q$ such that: 
\begin{equation}\lb{estilemanx}
\E\big[\big\{\int_0^T
|^n\underbar Z_r^{i,t,x}|^2dr\big \}^q+\big\{\int_0^T \{\int_E|^n\underbar V_r^{i,t,x}(e)|^2\mu_n(dr,de)\}dr\}^q\big]\le C_q.\end{equation}
Actually first note that for fixed $i$ in ${\cal I}$, 
$(Y^{i,n},Z^{i,n},V^{i,n},K^{i,n})$ is a solution of a reflected BSDE with bounded barrier and bounded 
terminal condition, and the function $\widehat f_i(t,x,\vec 0,0)$ as well. Next by It\^o's formula it holds that: For any $\ii$ and $\forall s\le T$,
\begin{align}
(^n\!\sl Y_s^{i,t,x})^2& +\int_s^T |{}^{n}\!\underbar Z_r^{i,t,x}|^2dr+\int_t^T\int_E|^n\!\sl V_r^{i,t,x}(e)|^2{\mu}_n(dr,de)\nnb\\&=h_i({}^{n}\!X_T^{t,x})^2+ 2\int_s^T {^n\!\sl Y}_r^{i,t,x} \widehat{f}_i(r,{}^{n}\!X_r^{t,x},({}^{n}\!Y_r^{j,t,x})_{j\in \mi},{}^{n}\!\underbar Z_r^{i,t,x})dr
+2\int_s^T {^n\!\sl Y}_r^{i,t,x}d{}^{n}\!\underbar K_r^{i,t,x} \nnb\\&-2\int_s^T {{}^{n}\!\sl Y_r^{i,t,x}}\,{}^{n}\!\underbar Z_r^{i,t,x}dB_r -2\int_s^T \int_E {^n\!\sl Y}_r^{i,t,x} \,{^n\!\sl V}_r^{i,t,x}(e) \tilde{\mu}_n(dr,de).\nnb
\end{align}
Now since the processes $(Y^{i,n})_{\ii}$ are bounded and since, $h^i(.)$ and $\hat f^i(t,x,\vec 0,0)$ are bounded then for any $\eta>0$ there exists a constant $C_\eta$ (which depends on $\eta$) such that: $\fst$, 
$$
h_i({}^{n}\!X_T^{t,x})^2+ 2\int_s^T {^n\!\sl Y}_r^{i,t,x} \widehat{f}_i(r,{}^{n}\!X_r^{t,x},({}^{n}\!Y_r^{j,t,x})_{j\in \mi},{}^{n}\!\underbar Z_r^{i,t,x})dr\le C_\eta +\frac{1}{\eta}\int_s^T|{}^{n}\!\underbar Z_r^{i,t,x}|^2dr
$$
and $$
\int_s^T {^n\!\sl Y}_r^{i,t,x}d{}^{n}\!\underbar K_r^{i,t,x}\leq C ({}^{n}\!\underbar K_T^{i,t,x}-{}^{n}\!\underbar K_s^{i,t,x}). 
$$
Therefore,  
\begin{align*}
&(^n\!\sl Y_s^{i,t,x})^2 +\int_s^T |{}^{n}\!\underbar Z_r^{i,t,x}|^2dr+\int_s^T\int_E|^n\!\sl V_r^{i,t,x}(e)|^2{\mu}_n(dr,de)\nnb\\&\le C_\eta +\frac{1}{\eta}\int_s^T|{}^{n}\!\underbar Z_r^{i,t,x}|^2dr+C ({}^{n}\!\underbar K_T^{i,t,x}-{}^{n}\!\underbar K_s^{i,t,x})\\&
\qq -2\int_s^T {{}^{n}\!\sl Y_r^{i,t,x}}\,{}^{n}\!\underbar Z_r^{i,t,x}dB_r -2\int_s^T \int_E {^n\!\sl Y}_r^{i,t,x} \,{^n\!\sl V}_r^{i,t,x}(e) \tilde{\mu}_n(dr,de)\\
&
\le (C_\eta+C) +\frac{1}{\eta}\int_s^T|{}^{n}\!\underbar Z_r^{i,t,x}|^2dr+
C\int_s^T{}^{n}\!\underbar Z_r^{i,t,x}dB_r +C\int_s^T \int_E {^n\!\sl V}_r^{i,t,x}(e) \tilde{\mu}_n(dr,de)\\&
 \qq +C\int_s^T|{}^{n}\!\underbar Z_r^{i,t,x}|dr  
 +2\sup_{s\le T}|\int_s^T  {^n\!\sl Y}_r^{i,t,x}{}^{n}\!\underbar Z_r^{i,t,x}dB_r| +2\sup_{s\le T}|\int_s^T \int_E {^n\!\sl Y}_r^{i,t,x} \,{^n\!\sl V}_r^{i,t,x}(e) \tilde{\mu}_n(dr,de)|
\end{align*}
in replacing $({}^{n}\!\underbar K_T^{i,t,x}-{}^{n}\!\underbar K_s^{i,t,x})$ and taking into account of the boundedness of 
$^n\!\sl Y^{i,t,x}$, $h_i$ and $\widehat{f}_i(t,x,0,0)$, $\ii$. Now in taking the power $q$ in each hand-side, using the inequality $(\sum_{i=1,7}a_i)^q\le 7^{q-1}(\sum_{i=1,7}a_i^q)$ and finally the BDG and Cauchy-Schwarz inequalities we obtain: For any $s\le T$, 
\begin{align*}
&\frac{1}{2}\E\big [\big\{\int_s^T |{}^{n}\!\underbar Z_r^{i,t,x}|^2dr\}^q+\{\int_s^T\int_E|^n\!\sl V_r^{i,t,x}(e)|^2{\mu}_n(dr,de)\big \}^q\big ]\\&
\le 7^{q-1}\Big\{(C_\eta+C)^q +\frac{1}{\eta^q}\E[(\int_s^T|{}^{n}\!\underbar Z_r^{i,t,x}|^2dr)^q]+
C_1\E[(\int_s^T|\underbar Z_r^{i,t,x}|^2dr)^{\frac{q}{2}}] \\&
\qq +C_2\E[(\int_s^T \int_E |{^n\!\sl V}_r^{i,t,x}(e)|^2 {\mu}_n(dr,de))^{\frac{q}{2}}] +C_3\E[(\int_s^T|{}^{n}\!\underbar Z_r^{i,t,x}|^2dr)  ^{\frac{q}{2}}]
 +C_4\E[(\int_s^T|{}^{n}\!\underbar Z_r^{i,t,x}|^2dr)  ^{\frac{q}{2}}]\\&
 \qq +C_5\E[(\int_s^T \int_E |{^n\!\sl V}_r^{i,t,x}(e)|^2 {\mu}_n(dr,de))^{\frac{q}{2}}]\Big \}.
\end{align*}
Next in taking $\eta^q=\eta_0^q=7^{q-1}.4$ and using the Cauchy-Schwarz inequality several times we obtain:
\begin{align*}
&\frac{1}{4}\E\big [\big\{\int_s^T |{}^{n}\!\underbar Z_r^{i,t,x}|^2dr\}^q+\frac{1}{2}\{\int_s^T\int_E|^n\!\sl V_r^{i,t,x}(e)|^2{\mu}_n(dr,de)\big \}^q\big ]\\&
\le 7^{q-1}\Big\{(C_{\eta_0}+C)^q +
(C_1+C_3+C_4)\sqrt{\E[(\int_s^T|\underbar Z_r^{i,t,x}|^2dr)^{q}] }\\&\qq \qq\qq +(C_2+C_5)\sqrt{\E[(\int_s^T \int_E |{^n\!\sl V}_r^{i,t,x}(e)|^2 {\mu}_n(dr,de))^q]}
\Big \}.
\end{align*}
Now, at least after a use of a localization procedure, we obtain that: $\forall s\le T$,
$$
\E\big [\big\{\int_s^T |{}^{n}\!\underbar Z_r^{i,t,x}|^2dr\}^q]+
\E[\{\int_s^T\int_E|^n\!\sl V_r^{i,t,x}(e)|^2{\mu}_n(dr,de)\big \}^q]\le C_q
$$
where the constant $C_q$ does not depend on $n$. 
\ms

Next $\d^*$ satisfies: For any $\ii$ and $\ell \ge 1$,
\begin{align}
^n\underbar Y^{i,t,x}_{0}=&\sum_{k=1}^{\ell}h_{\xi^*_{{k-1}}}(^n\xtx_T)\mathbf{1}_{[\tau _{k}^{\ast}=T]}\mathbf{1}_{[\tau_{k-1}^{\ast}<T]}-\sum_{k=1}^{\ell}g_{\xi^*_{k-1}\xi^*_k}(\tau _{k}^{\ast })\mathbf{1}_{[\tau^*_{k}<T]}+ {^n\underbar Y}_{\tau
_{\ell}^{\ast }}^{\xi_\ell^*}\mathbf{1}_{[\tau _{\ell}^{\ast}<T]}\nonumber\\&\qq+\sum_{k=1}^{\ell}\int_{\tau_{k-1}^{\ast}}^{\tau_k^{\ast}}
\widehat{f}_{\xi^*_{k-1}}(r,{}^{n}\!X_r^{t,x},({}^{n}\!Y_r^{j,t,x})_{j\in \mi},{}^{n}\!\underbar Z_r^{\xi^*_{k-1},t,x})dr-\sum_{k=1}^{\ell}\int_{\tau_{k-1}^{\ast}}^{\tau_k^{\ast}}{^n\underbar Z}^{\xi^*_{k-1}}_{r}dB_{r}\nn \\&\qq-\sum_{k=1}^{\ell}\int_{\tau_{k-1}^{\ast}}^{\tau_k^{\ast}}\int_E {}^{n}\!\underbar V_r^{\xi^*_{k-1},t,x}(e) \tilde{\mu}_n(dr,de)\lb{eqoptim3}
\end{align}
where 
${}^{n}\!\underbar Z_r^{\xi^*_{k-1},t,x}={}^{n}\!\underbar Z_r^{\ell,t,x}$ (resp. 
${}^{n}\!\underbar V_r^{\xi^*_{k-1},t,x}(e)={}^{n}\!\underbar V_r^{\ell,t,x}(e)$)
when $\xi^*_{k-1}=\ell$. Therefore we have:
\begin{align}
&\sum_{k=1}^{\ell}g_{\xi^*_{k-1}\xi^*_k}(\tau _{k}^{\ast })\mathbf{1}_{[\tau^*_{k}<T]}=-^n\underbar Y^{i,t,x}_{0}+\sum_{k=1}^{\ell}h_{\xi^*_{{k-1}}}(^n\!\!\xtx_T)\mathbf{1}_{[\tau _{k}^{\ast}=T]}\mathbf{1}_{[\tau_{k-1}^{\ast}<T]}+{^n\underbar Y}_{\tau
_{\ell}^{\ast }}^{\xi_\ell^*}\mathbf{1}_{[\tau _{\ell}^{\ast}<T]}\nonumber\\&\qq+\sum_{k=1}^{\ell}\int_{\tau_{k-1}^{\ast}}^{\tau_k^{\ast}}
\widehat{f}_{\xi^*_{k-1}}(r,{}^{n}\!\!X_r^{t,x},({}^{n}\!Y_r^{j,t,x})_{j\in \mi},{}^{n}\!\underbar Z_r^{\xi^*_{k-1},t,x})dr-\sum_{k=1}^{\ell}\int_{\tau_{k-1}^{\ast}}^{\tau_k^{\ast}}{^n\underbar Z}^{\xi^*_{k-1}}_{r}dB_{r}\nn \\&\qq-\sum_{k=1}^{\ell}\int_{\tau_{k-1}^{\ast}}^{\tau_k^{\ast}}\int_E {}^{n}\!\!\underbar V_r^{\xi^*_{k-1},t,x}(e) \tilde{\mu}_n(dr,de).\lb{eqoptim}
\end{align}
It follows that:
\begin{align}
&\sum_{k=1}^{\ell}g_{\xi^*_{k-1}\xi^*_k}(\tau _{k}^{\ast })\mathbf{1}_{[\tau^*_{k}<T]}\le |{^n\underbar Y}^{i,t,x}_{0}|+|\sum_{k=1}^{\ell}h_{\xi^*_{{k-1}}}(^n\xtx_T)\mathbf{1}_{[\tau _{k}^{\ast}=T]}\mathbf{1}_{[\tau_{k-1}^{\ast}<T]}|+|{^n\underbar Y}_{\tau
_{\ell}^{\ast }}^{\xi_\ell^*}\mathbf{1}_{[\tau _{\ell}^{\ast}<T]}|\nonumber\\&\qq+|\sum_{k=1}^{\ell}\int_{\tau_{k-1}^{\ast}}^{\tau_k^{\ast}}
\widehat{f}_{\xi^*_{k-1}}(r,{}^{n}\!X_r^{t,x},({}^{n}\!Y_r^{j,t,x})_{j\in \mi},{{}^{n}\!\underbar Z}_r^{\xi^*_{k-1},t,x})dr|+|\sum_{k=1}^{\ell}\int_{\tau_{k-1}^{\ast}}^{\tau_k^{\ast}} {{}^n\underbar Z}^{\xi^*_{k-1}}_{r}dB_{r}|\nonumber\\&\qq+|\sum_{k=1}^{\ell}\int_{\tau_{k-1}^{\ast}}^{\tau_k^{\ast}}\int_E {}^{n}\!\underbar V_r^{\xi^*_{k-1},t,x}(e) \tilde{\mu}_n(dr,de)|.\nonumber\lb{eqoptim2}
\end{align}
Now let us define, for any $r\le T$, $^n\!Z^{*,t,x}_{r}=\sum_{k\ge 1}{^n\!\underbar Z}^{{\xi^*_{k-1},t,x}}_{r}\mathbf{1}_{[\tau _{k-1}^{\ast}\leq r<\tau _{k}^{\ast}]}$ and \\
$^n\!V^{*,t,x}_{r}(e)=\sum_{k\ge 1}{^n\underbar V}^{{\xi^*_{k-1},t,x}}_{r}(e)\mathbf{1}_{[\tau _{k-1}^{\ast}\leq r<\tau _{k}^{\ast}]}$. 
Note that by \eqref{estilemanx}, for any $q\ge 1$ there exists some constant $C_q$
 such that: 
\begin{eqnarray}\label{majbo}
\E\big[\big\{\int_0^T
|^n\!Z^{*,t,x}_{r}|^2dr+\int_0^T \int_E|^n\!V^{*,t,x}_{r}(e)|^2\mu_n(de)dr\big \}^q\big]\le C_q.    
\end{eqnarray}
Then 
\begin{align}
&\sum_{k=1}^{\ell}g_{\xi^*_{k-1}\xi^*_k}(\tau _{k}^{\ast })\mathbf{1}_{[\tau^*_{k}<T]}\leq 2\max_{i\in\mathcal{I}}\sup_{s\in[0,T]}|
{^n\underbar Y}_{s}^{i,t,x}|+\int_{0}^{T}\big \{\sum_{i=1,m}
|\widehat{f}_i(r,{}^{n}\!X_r^{t,x},({}^{n}\!Y_r^{j,t,x})_{j\in \mi},{}^{n}\!\underbar Z_r^{*,t,x})|\big \}dr \nn\\&\qq
+\max_{i\in\mathcal{I}}|h_{i}(^nX^{t,x}_T)|+\sup_{s\in[0,T]}\big|\int_{0}^{s}{}^nZ^{*,t,x}_{r}dB_{r}\big|+\sup_{s\in[0,T]}\big|\int_{0}^{s}\int_E{^n}V^{*,t,x}_{r}(e)\tilde{\mu}_n(dr,de)\big|.\nn
\end{align}
On the other hand the monotonic convergence theorem yields:
$$\E[(A^{\d^*}_T)^q]=\E[\lim_{\ell \rw \infty}(\sum_{k=1}^{\ell}g_{\xi^*_{k-1}\xi^*_k}(\tau _{k}^{\ast })\mathbf{1}_{[\tau^*_{k}<T]})^q]=
\lim_{\ell \rw \infty}\E[(\sum_{k=1}^{\ell}g_{\xi^*_{k-1}\xi^*_k}(\tau _{k}^{\ast })\mathbf{1}_{[\tau^*_{k}<T]})^q]
$$
Take now the power to $q$ in each hand-side of the previous inequality and using the facts that: i) $(^nY^{i,t,x})_{\ii}$, $h^i(.)$ and $\widehat f^i(t,x,\vec 0,0)$ are bounded; ii) $\widehat f(.)$ is Lipschitz w.r.t $(y,z)$ ; iii) the inequality $(a+b+c+d+e)^q\le 5^{q-1}(
a^q+b^q+c^q+d^q+e^q)$ for any positive real constants $a,b,c,d$ and $e$; iv) estimates \eqref{estilemanx} and the Burkholder-Davis-Gundy inequality to deduce the existence of  constant $C_q$ such that: 
\begin{equation} \lb{apdix3}\E[(A_T^{\delta^*})^q]\le C_q
\end{equation}
which is the desired result. \qed
\end{proof}

In the same way we have also the following estimate for $A^{\bar \d}$ where $\bar \d$ is the switching strategy defined in \eqref{eqfd2}. 
\begin{corollaire}Assume that the assumptions $(i)-(ii)$ of Proposition \ref{firstresult} are fulfilled. Let $\bar \d$ be the optimal strategy for the switching problem associated with \\ $(f_i(t,X^n_t,.)\vee f_i(t,X^m_t,.),g_{ij},h_i(X^n_T)\vee h_i(X^m_T))$. Then for any $q\ge 1$, there exists a positive constant $C_q$ such that
\begin{equation} \lb{99}\E[(A_T^{\bar \delta})^q]\le C_q.
\end{equation}
\end{corollaire}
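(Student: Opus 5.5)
The argument mirrors Lemma \ref{estima}; the only change is the data. The strategy $\bar\delta$ is built, as in \cite{hamadene2015systems}, from the solution $({}^{n,m}\!Y^{i,t,x},{}^{n,m}\!Z^{i,t,x},{}^{n,m}\!V^{i,t,x},{}^{n,m}\!K^{i,t,x})_{\ii}$ of the system of obliquely reflected BSDEs driven by $B$ and $\tilde\mu_m$ with data $((F^{n,m}_i)_{\ii},(g_{ij})_{i,j\in\mathcal I},(H^{n,m}_i)_{\ii})$. So the plan is to rerun the two steps of Lemma \ref{estima} for this system and check that every constant depends only on bounds that are uniform in $(n,m)$.

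\textbf{Step 1: uniform bounds on the data.} I first record the needed uniform bounds.
\begin{itemize}
\item[(a)] $|H^{n,m}_i|\le \max_{\ell}\|h_\ell\|_\infty$.
\item[(b)] $|F^{n,m}_i(s,\omega,0)|\le \hat C+\kappa_1\sup_{n}\|u^{\cdot,n}\|_\infty$. This uses \eqref{bornitude}, the Lipschitz property in $\vec y$, and the uniform boundedness of $u^{k,n}$ from Proposition \ref{firstresult} b), which gives $|{}^{n}\!Y^{k,t,x}|\le C$ independently of $n$.
\item[(c)] $z\mapsto F^{n,m}_i(s,\omega,z)$ is Lipschitz with constant $\max_i C_{f_i}$, since the maximum of two Lipschitz functions is Lipschitz with the same constant.
\end{itemize}
Next I need ${}^{n,m}\!Y^{i,t,x}$ bounded uniformly in $(n,m)$. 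The generators $F^{n,m}_i$ do not depend on $\vec y$, so the bounding-BSDE comparison argument of Proposition \ref{firstresult} applies. It works here because the generators do not involve the jump component, so comparison holds without (H5). The resulting bound depends only on the constants in (a)--(c) and on $T$.

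\textbf{Step 2: moment estimate for $(Z,V)$.} I apply It\^o's formula to $({}^{n,m}\!Y^{i,t,x})^2$ exactly as in the proof of \eqref{estilemanx}. The increment ${}^{n,m}\!K_T-{}^{n,m}\!K_s$ is replaced using the equation. I then raise to the power $q$, use the BDG inequality with $\tilde\mu_m$ in place of $\tilde\mu_n$, absorb the $\frac1\eta\int|Z|^2$ term, and localize. This yields
\[
\E\Big[\Big(\int_0^T|{}^{n,m}\!Z_r^{i,t,x}|^2dr\Big)^q+\Big(\int_0^T\int_E|{}^{n,m}\!V_r^{i,t,x}(e)|^2\mu_m(dr,de)\Big)^q\Big]\le C_q .
\]
The constant $C_q$ depends only on $q$, $T$ and the uniform bounds of Step 1. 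Nothing depends on the truncation level, because the BDG constants for $\int\!\!\int V\,d\tilde\mu_m$ are expressed through the quadratic variation $\int\!\!\int |V|^2 d\mu_m$ and do not involve $\lambda_m(E)$.

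\textbf{Step 3: bound on the switching cost.} I write the optimality identity \eqref{eqoptim} along $\bar\delta$ with $F^{n,m}$ and $H^{n,m}$ in place of $\widehat f$ and $h$, and isolate $\sum_{k\le\ell}g_{\bar\alpha_{k-1}\bar\alpha_k}(\bar\theta_k)$. I bound it by:
\begin{itemize}
\item twice the uniform bound on ${}^{n,m}\!Y$;
\item $\max_i\|h_i\|_\infty$;
\item $\int_0^T\sum_i|F^{n,m}_i(r,Z^*_r)|\,dr$, where $Z^*$ and $V^*$ are the concatenations along $\bar\delta$;
\item the suprema of the stochastic integrals of $Z^*$ against $dB$ and of $V^*$ against $\tilde\mu_m$.
\end{itemize}
I then take the $q$-th power, use the BDG inequality, and invoke Step 2, noting that $Z^*$ and $V^*$ are dominated by sums over $i$ of the components. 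Finally, letting $\ell\to\infty$ by monotone convergence gives \eqref{99}.

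\textbf{Main obstacle.} The delicate point is Step 1, namely uniformity in both $n$ and $m$, in particular of the bound on ${}^{n,m}\!Y$ and of $|F^{n,m}_i(\cdot,0)|$. This rests entirely on the $n$-independent boundedness of $u^{i,n}$ from Proposition \ref{firstresult}. Once that is secured, Steps 2 and 3 are a verbatim transcription of Lemma \ref{estima}.
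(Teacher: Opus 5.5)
Your proposal is correct and follows the paper's route. The paper proves this corollary only by saying it goes ``in the same way'' as Lemma~\ref{estima}: one reruns the It\^o/BDG moment estimate for $(Z,V)$ and the optimality identity along $\bar\delta$, now for the system with data $(F^{n,m}_i,g_{ij},H^{n,m}_i)$. Your explicit check that every constant is uniform in $(n,m)$ is exactly what that phrase leaves implicit: the $n$-independent bound on $u^{i,n}$ from Proposition~\ref{firstresult}, the resulting bound on ${}^{n,m}\!Y$, and BDG constants that do not involve $\lambda_m(E)$.
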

\def \pr {\underline{Proof}}

\end{document}